\numberwithin{equation}{section}
\newtheorem{theorem}{Theorem}[section]
\newtheorem{proposition}[theorem]{Proposition}
\newtheorem{conjecture}[theorem]{Conjecture}
\newtheorem{corollary}[theorem]{Corollary}
\newtheorem{remark}[theorem]{Remark}
\newtheorem{lemma}[theorem]{Lemma}
\newtheorem{example}[theorem]{Example}
\newtheorem{definition}[theorem]{Definition}
\newtheorem{algorithm}[theorem]{Algorithm}
\newtheorem{maintheorem}[theorem]{Main Theorem}
\newtheorem{claim}[theorem]{Claim}
\newtheorem{acknowledgements}[theorem]{Acknowledgements}
\newtheorem{condition}[theorem]{Condition}
\def\A{\mathcal{A}}
\def\AA{\mathbb{A}}
\def\ZZ{\mathbb{Z}}
\def\QQ{\mathbb{Q}}
\def\CC{\mathbb{C}}
\def\UU{\mathcal{U}}
\def\II{\mathcal{I}}
\def\JJ{\mathcal{J}}
\def\1{\mathbb{1}}
\def\gg{\mathfrak{g}}
\def\OO{\mathcal{O}}
\begin{document}

\title{Toric Poisson Ideals in Cluster Algebras}\author{Sebastian Zwicknagl}
              

\maketitle

\begin{abstract}
This paper investigates the Poisson geometry associated to a cluster algebra over
the complex numbers, and its relationship to compatible torus actions. We show, under some  assumptions,
that each Noetherian cluster algebra has only finitely many torus invariant Poisson prime ideals
and we show how to obtain using the exchange matrix of an initial seed. In
fact, these ideals are independent of the choice of compatible Poisson
structure.  In many interesting cases the ideals can be described more explicitly.
\keywords{Cluster algebras \and Poisson geometry  }
\end{abstract}

\tableofcontents  
\section{Introduction}

Cluster algebras were introduced by Fomin and Zelevinsky around the year 2000 (\cite{FZI}) in
order to understand the combinatorial properties of Lusztig's dual canonical
basis (see e.g. \cite{L1}
and \cite{L2}) in quantum groups. Being  commutative algebras,  cluster algebras relate to
these quantized function algebras via Poisson geometry--the {\it compatible
Poisson structures}  
introduced by Gekhtman, Shapiro and Vainshtein in \cite{GSV}, whose properties
are also the focus of their recent
book \cite{GSV B}.  Given a Noetherian Poisson algebra, it is a natural question to investigate the symplectic/Poisson geometry attached to it.     As there exists a natural algebraic torus $T$ which acts on the cluster algebra, we can follow the approach developed by Brown and Gordon  in \cite{Bro-Go} and Goodearl  in \cite{Goo1}.  The  first step, then,  would be to classify the torus orbits of Poisson ideals of the algebra,   for which we need to determine the torus invariant Poisson prime ideals, abbreviated as TPPs.

  In the present paper  we study TPPs in Noetherian (upper) cluster algebras using the combinatorial information obtained from the initial data, the seed of the cluster algebra. The main idea is that a cluster, and its nearby mutations should tell us  much about the geometry attached to the cluster algebra as a whole.

Cluster algebras are nowadays very well-established, hence we do not recall any of the definitions here, and refer the reader to the literature, resp.~our Section \ref{se:Cluster Algebras}. We will denote the initial seed by $({\bf x}, B)$ where ${\bf x}=(x_1,\ldots,x_n)$ and $B$ is an integer  $m\times n$-matrix with $m\le n$ such that its principal $m\times m$ submatrix is skew-symmetrizable.  The cluster variables $x_{m+1},\ldots, x_n$ are the frozen variables which we will call coefficients. The Poisson coefficient matrix $\Lambda$ is a skew-symmetric $n\times n$-matrix (see also Section \ref{se:poissonstructure}). We refer to a cluster algebra with compatible Poisson bracket as a {\it Poisson cluster algebra}, given by $({\bf x},B,\Lambda)$.

\begin{maintheorem}
\label{th:maintheorem}
Let $\AA$ be a Noetherian cluster algebra or upper cluster algebra over the complex numbers , given by $({\bf x},B,\Lambda)$,
and $T$ the torus of global toric actions.  Assume that it is sufficiently generic (for details see Section \ref{se:quotient cluster alg}). Then, there are only finitely many torus
invariant Poisson
prime ideals in $\AA$. 

\end{maintheorem}

Moreover, we consider cluster algebras for which the following  assumption  called COS (see Condition \ref{cond:COS} in Section \ref{se:COS}) holds:
 Let $\II, \JJ$ be TPPs, and let $codim(\II)=codim(\JJ)-k$. If $\II\subset \JJ$, then there exist TPPs $\II=\II_0,\II_1,\ldots,\II_{k-1},\II_k=\JJ$ such that  $\II=\II_0\subsetneq \II_1\subsetneq\ldots\subsetneq\II_{k-1}\subsetneq\II_k=\JJ$.
 Assuming COS, we can explicitly describe these ideals (Theorem \ref{th:ideals descr}), in terms of Laurent polynomials. It is known that COS  holds for many interesting classes of  cluster algebras, e.g. the algebra of functions on a complex semisimple algebraic group, introduced in \cite{BFZ}, or unipotent radicals, studied in \cite{GLS1} and \cite{GLS2}. Moreover, we conjecture that all cluster algebras with a compatible Poisson structure satisfy  COS (Conjecture \ref{con:COS}). As evidence, we show that acyclic cluster algebras of even rank, which could not satisfy COS if they contained non-trivial TPPs, do, indeed,  not have non-trivial TPPs (Theorem \ref{th:acyclic}). We can use this result to prove that the corresponding cluster variety is smooth. 
The reader should notice that if $\AA$ is the coordinate ring of an affine variety $X$, then $X$ does not necessarily equal the cluster manifold of \cite{GSV}, as $X$ may not be smooth (see Example \ref{ex:Singularities}) . The cluster manifold, however, is an open subset of $X$. 

Let us briefly explain why this result may be interesting. The canonical, resp.~dual canonical basis is not the only elusive feature about quantum groups. When ring theorists began to investigate the prime  spectra of quantum groups and to relate them to their classical counterparts-- symplectic leaves of the so called standard Poisson structure on semisimple complex algebraic groups-- it became clear that the following conjecture should hold (for notation and definitions see Goodearl's \cite{Goo1}):

\begin{conjecture}
\label{conj:homeo}
Let $G$ be a complex semisimple algebraic group and $\OO_q(G)$ the corresponding quantized function algebra. The topological space of primitive ideals, the primitive spectrum of $\OO_q(G)$, is homeomorphic to the space of symplectic leaves of the standard Poisson structure on $\CC[G]$, where the latter is endowed with the natural quotient topology. 
\end{conjecture}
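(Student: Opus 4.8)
The plan is to run the Brown--Gordon--Goodearl stratification program \cite{Bro-Go, Goo1} in parallel on $\OO_q(G)$ and on $\CC[G]$, using that $\CC[G]$ is a Noetherian cluster algebra (from \cite{BFZ}) which satisfies COS, so that the Main Theorem \ref{th:maintheorem} and the explicit description of Theorem \ref{th:ideals descr} apply on the semiclassical side; since $G$ is smooth, the phenomenon of Example \ref{ex:Singularities} does not intervene and $G$ agrees with its cluster manifold in the sense of \cite{GSV}. This is a long-standing open problem, so what follows is a program rather than a complete argument. First I would set up the two finite stratifications. On the quantum side $\OO_q(G)$ carries a rational action of a torus $T_q$ of rank equal to that of the torus $T$ of global toric actions on $\CC[G]$; by the $\mathcal{H}$-stratification theory there are finitely many $T_q$-invariant primes, the primitive spectrum is the disjoint union of the associated strata, and the stratum over a $T_q$-prime $\JJ_q$ is homeomorphic to the maximal spectrum of a Laurent polynomial ring, i.e.\ to an algebraic torus. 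On the Poisson side the Main Theorem gives finitely many TPPs $\II_1,\dots,\II_N$ in $\CC[G]$, the space of symplectic leaves is the disjoint union of the corresponding locally closed Poisson strata, and Theorem \ref{th:ideals descr} presents each such stratum, through its Laurent-polynomial description, as an algebraic torus of the same dimension as the one attached on the quantum side.

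Second I would produce a bijection between the two index sets that preserves the inclusion partial order. The point is that both families of torus-invariant ideals are cut out by monomials in the cluster and coefficient variables and are governed by the same combinatorial datum --- the exchange matrix $B$ (together with $\Lambda$, which by the Main Theorem does not change which ideals occur) and the compatible pair underlying $\OO_q(G)$. Sending a $T_q$-prime $\JJ_q$ to the TPP generated by the classical limits of the standard generators of $\JJ_q$, I would check, using the explicit generators of Theorem \ref{th:ideals descr} on one side and the known generators of the $T_q$-primes of $\OO_q(G)$ on the other, that this assignment is a bijection and that $\JJ_q \subset \JJ_q'$ if and only if the matched TPPs satisfy the same inclusion. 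Within a matched pair of strata the induced map is simply an identification of two algebraic tori of equal dimension, hence a homeomorphism for the subspace (quotient) topologies, and one checks that the torus actions are intertwined.

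Third --- and here lies the real difficulty --- I would glue the stratumwise homeomorphisms into a global homeomorphism. The subspace topologies on the individual strata do not determine the topology of either total space, so one must show that a primitive ideal of $\OO_q(G)$ lies in the closure of a stratum exactly when the corresponding symplectic leaf lies in the closure of the matched leaf-stratum of $\CC[G]$. These closure relations are controlled by the inclusion order among torus-invariant primes (matched in the second step) together with the \emph{transverse} data of how $T$, respectively $T_q$, acts on the normal directions to each stratum; COS is precisely what forces the chains of TPPs, hence the incidence pattern of the classical stratification, to be as rigid as the one on the quantum side, where it is known from the $\mathcal{H}$-stratification. The hard part will be proving that these transverse torus-action data agree on both sides --- equivalently, that passage to the quantum algebra does not deform the way the strata are glued. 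I expect this to reduce, after localizing at each cluster, to comparing the Poisson bracket encoded by $\Lambda$ with the $q$-commutation matrix of the corresponding quantum torus and then invoking the deformation-independence already built into the Main Theorem; making this comparison uniform across all clusters of $\CC[G]$, and controlling the finitely many primes not visible in any single cluster, is the step I would expect to consume most of the work.
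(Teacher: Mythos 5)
This statement is Conjecture \ref{conj:homeo}; the paper does not prove it, and in fact states explicitly that ``there is at present no way to study the topology of these spaces.'' There is therefore no proof in the paper to compare yours against, and your proposal --- which you yourself describe as ``a program rather than a complete argument'' --- does not close the gap. The decisive missing step is exactly your third one: matching the Jacobson topology on $\mathrm{Prim}\,\OO_q(G)$ with the quotient topology on the leaf space across strata. That comparison of closures is the entire open content of the conjecture, and nothing in this paper (or in the stratification theory you cite) supplies it; asserting that COS ``forces the incidence pattern to be as rigid as the one on the quantum side'' is a restatement of what must be proved, not an argument.

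Several of the intermediate identifications are also not available in the form you use them. The paper is explicit that Poisson prime ideals yield a \emph{coarser} stratification than symplectic leaves --- into symplectic cores --- so the TPP strata of the Main Theorem are not the leaves whose space appears in the conjecture; the passage from cores to leaves (which on the quantum side is the passage from $T_q$-primes to primitive ideals) is an additional layer you elide. Theorem \ref{th:ideals descr}(b) gives an embedding of $\AA/\II$ into a Laurent polynomial ring, not an identification of a stratum, still less of its leaf space, with an algebraic torus. And the cluster-algebra input from \cite{BFZ} lives on individual double Bruhat cells rather than on $\CC[G]$ as a whole; the paper only observes that the double Bruhat stratification satisfies COS and says that verifying the super-toric and geometric-genericity hypotheses for the other clusters ``will be the next important step,'' so even the semiclassical half of your parallel stratification is not fully in place. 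The paper's own stated hope is the more modest one of using defining clusters (and eventually quantum cluster algebras) to make progress toward continuity of the Dixmier map, e.g.\ for $SL_n$.
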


The conjecture is an analogue of Kirillov's Orbit Method, resp.~Geometric Quantization (see \cite{Kir} and \cite{Kir2}). In this context, such a homeomorphism is referred to as a Dixmier-map. Hodges, Levasseur \cite{HL1}, \cite{HL2}, Toro \cite{HLT} and Joseph (\cite{Jo1},\cite{Jo}) constructed a stratification of the prime and primitive spectra of quantum groups into torus orbits. Very recently, Yakimov formulated a precise version of  Conjecture  \ref{conj:homeo} in \cite[Section 4]{Yak-spec}, where he employed results of Kogan and Zelevinsky \cite{KZ} in order to parametrize the symplectic leaves. Kogan and Zelevinsky's work foreshadows cluster algebras which appeared just a few  years later.
However, there is at present no  way to study the topology of these spaces. Using the methods developed in the proof of Theorem \ref{th:maintheorem} and Theorem \ref{th:ideals descr}, in particular by employing the concept of defining clusters, we hope to shed some light on the topology of the space of symplectic leaves. The recent progress regarding the theory of quantum cluster algebras (see e.g. Geiss, Leclerc and Schr\"oer's \cite{GLSq})  leads us to believe that  proving similar results for certain quantum cluster algebras will allow to establish continuity of  the Dixmier map for example in the case of  $SL_n$.
 Notice that we are not studying symplectic leaves in the present paper, but Poisson ideals which, in general, yield a coarser stratification--into symplectic cores rather than  symplectic leaves (see e.g. \cite{Bro-Go}).  However, we will explain in a future paper how we can apply our results to a  study of symplectic leaves.
   
 Let us briefly explain the organization of the paper, and some of the ideas of the proofs. First, we recall some definitions and well-known facts, about cluster algebras and their compatible Poisson structures. 
 The subsequent Section \ref{se:quotient cluster alg} is devoted to the proof of the main theorem. First, we consider the intersection $S$ of a toric Poisson prime ideal (TPP) $\II$ with a certain finite set of cluster variables $Y$, and show that $S$ must satisfy a number of conditions. The key observation is  Proposition \ref{pr:TPP super toric}, which states that the intersection of $\II$ with the polynomial ring $\CC[x_1,\ldots, x_n]$ generated by a cluster ${\bf x}=(x_1,\ldots,x_n)$ is  generated by a subset of $\{x_1,\ldots,x_n\}$.   We next introduce the notion of a {\it defining cluster} for a TPP $\II$ and construct such a cluster from a given cluster ${\bf x}$ through mutations. The defining clusters allow us to prove existence and finiteness results which complete the proof of Theorem  \ref{th:maintheorem}.  
As a  non-trivial application we show in Section \ref{se:acyclic cl} that the cluster variety defined by an acyclic cluster algebra of even rank with trivial coefficients is smooth (always under the assumption that  $B$ has full rank).  
   
In Section \ref{se:COS} we introduce  our strongest result. Suppose the cluster algebra, or upper cluster algebra $\AA$ satisfies the condition COS (see above). 
We can now explicitly describe the TPPs and their inclusion relations (Corollary \ref{co:incl}). Moreover, given an element $a\in\AA$ and a TPP $\II$ we can determine algorithmically whether $a\in \II$ (see Theorem \ref{th:ideals descr}). An appendix on torus invariant prime ideals completes the text.
Additionally, we explain our constructions on a running example, the Grassmannian $\CC[G(2,5)]$ of two-dimensional subspaces of $\CC^5$.
  
Clearly, this paper is only a starting point. It  suggests that if we manage to better understand the (Poisson) geometry associated with clusters and cluster algebras  , we should  be rewarded with important and   beautiful results. 

\section{Cluster Algebras}
\label{se:Cluster Algebras}
\subsection{Cluster algebras}
 
 In this section, we will review the definitions and some basic results on
cluster algebras. Denote by $\mathfrak{F}=\CC(x_1,\ldots,
x_n)$ the field of fractions  in $n$ indeterminates.  Let $B$ be a $m\times n$-integer matrix such that its principal $m\times m$-submatrix is skew-symmetrizable. Recall that a $m
\times
m$-integer matrix $B'$ is called skew-symmetrizable if
there exists a $m \times m$-diagonal matrix  $D$ with positive integer entries  
such that  $B' \cdot D$ is skew-symmetric.
We call the tuple $(x_1,\ldots,x_n, B)$ the {\it initial seed} of the cluster
algebra and   $ (x_1,\ldots x_m)$ a cluster, while ${\bf x}=(x_1,\ldots x_n)$ is
called an extended cluster. The cluster variables $x_{m+1},\ldots,x_n$ are called {\it coefficients}. We will now construct more  clusters, $(y_1,\ldots,
y_m)$ and extended clusters ${\bf
y}=(y_1,\ldots, y_n)$, which are transcendence bases of $\mathfrak{F}$, and the
corresponding 
seeds $({\bf y}, \tilde B)$ in the following way.

Define for each real number $r$ the numbers $r^+={\rm max}(r,0)$ and  $r^-={\rm
min}(r,0)$.
Given a skew-symmetrizable integer $m \times n$-matrix $B$, we define for each
$1\le i\le m$
the {\it exchange polynomial}
\begin{equation}
 P_i = \prod_{k=1}^n x_k^{b_{ik}^+}+ \prod_{k=1}^n  x_k^{-b_{ik}^-}\ .
\end{equation}

We can now define the new cluster variable $x_i'\in\mathfrak{F}$ via the equation
\begin{equation}
\label{eq:exchange}
 x_ix_i'=P_i\ . 
\end{equation}

This allows us to refer to the matrix $B$ as the {\it exchange matrix} of the
cluster $(x_1,\ldots,x_n)$, and to the relations  defined by Equation \ref{eq:exchange} for $i=1,\ldots,m$ as {\it exchange relations}. 

We obtain that $(x_1,x_2,\ldots, \hat x_i,x_i',x_{i+1},\ldots, x_n)$ is a
transcendence basis of $\mathfrak{F}$. We now define the new exchange matrix
$B_i=B'=(b_{ij}')$, associated to the new (extended) cluster $${\bf x}_i=(x_1,x_2,\ldots, \hat
x_i,x_i',x_{i+1},\ldots, x_n)$$
 by defining the coefficients $b_{ij}'$ as follows: 

$\bullet$ $b_{ij}' = -b_{ij}$ if $j \le n$ and $i = k$ or $j = k$,

$\bullet$ $b_{ij}' =  b_{ij} + \frac{|b_{ik} |b_{kj} + b_{ik} |b_{kj} |}{2}$ if
$j \le n$ and $i \ne k$ and $j \ne k$,

$\bullet$ $b_{ij}'=b_{ij}$ otherwise.

This algorithm is called  {\it matrix mutation}. Note that $B_i$ is again
skew-symmetrizable (see e.g.~\cite{FZI}). The process of obtaining a new seed is
called {\it cluster mutation}. The set of seeds obtained from a given seed $({\bf x},B)$ is  called the mutation equivalence class of  $({\bf x},B)$.

\begin{definition}
The cluster algebra $\mathfrak{A}\subset \mathfrak{F}$ corresponding to an
initial seed $(x_1,\ldots, x_n,B)$ is the subalgebra of $\mathfrak{F}$,
generated by the elements of all the clusters in the mutation equivalence class of $({\bf x},B)$ . We refer to the elements of the clusters as the {\it cluster variables}.
\end{definition}

\begin{remark}
Notice that  the coefficients, resp.~frozen variables $x_{m+1},\ldots, x_n$  
will never be mutated. Of course, that explains their name.
\end{remark}

We have the following fact, motivating the definition of cluster algebras in the
study of total positivity phenomena and canonical bases.

\begin{proposition} \cite[Section 3]{FZI}(Laurent phenomenon) Let $\mathfrak{A}$
be a cluster algebra with
initial extended cluster $(x_1,\ldots, x_n)$. Any cluster variable $x$ can be
expressed uniquely as a Laurent polynomial in the variables
$x_1,\ldots, x_n$ with integer coefficients.  
\end{proposition}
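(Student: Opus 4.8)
The statement is the Laurent phenomenon of Fomin and Zelevinsky, and the plan is to prove it by induction on the combinatorial distance between seeds, the heart of the matter being a local comparison of the Laurent expansions attached to three consecutive seeds. Arrange the seeds of the mutation equivalence class as the vertices of the $n$-regular tree $\mathbb{T}_n$, with the initial seed $({\bf x},B)$ sitting at a root $t_0$; for a vertex $t$ write ${\bf x}(t)=(x_1(t),\ldots,x_n(t))$ for the extended cluster at $t$ and $\LL(t)=\ZZ[x_1(t)^{\pm 1},\ldots,x_n(t)^{\pm 1}]\subset\F$ for the associated ring of integer Laurent polynomials. Every cluster variable occurs in some ${\bf x}(t)$, so it suffices to show ${\bf x}(t)\subset \LL(t_0)$ for all $t$, which I would establish by induction on $d(t_0,t)$. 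Uniqueness needs no separate argument: distinct elements of $\LL(t_0)$ are distinct rational functions in $\F$, and if an element of $\F$ admits a Laurent expansion with complex coefficients at all, that expansion is unique, so an integral one is unique a fortiori. The base cases $d(t_0,t)\le 1$ are immediate from the exchange relation $x_ix_i'=P_i$, since $x_i'=P_i/x_i\in\LL(t_0)$.

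The inductive step reduces to the following local statement, the \emph{Caterpillar Lemma}. Let $t_0 - t_1 - t_2$ be three seeds with the edge $t_0t_1$ labelled $i$ and the edge $t_1t_2$ labelled $j\ne i$, and let $P=P_i$ be the exchange polynomial for the mutation at $t_0$ in direction $i$, regarded as an element of $\ZZ[x_1,\ldots,x_n]$ (note it is a binomial and hence genuinely a polynomial). Then any $y\in\F$ with $y\in\LL(t_1)\cap\LL(t_2)$ lies in $\LL(t_0)$, provided the numerators that appear when $y$ is cleared of the variable $x_i(t_1)=P/x_i(t_0)$ are coprime to $P$ in $\ZZ[x_1,\ldots,x_n]$. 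Granting this, given a cluster variable $z$ at a vertex $t$ I would refine the geodesic from $t_0$ to $t$ to a caterpillar: a path along which consecutive edge-labels are forced to be distinct, obtained by interpolating auxiliary mutations at each step, so that at every interior vertex the hypotheses of the local statement can be invoked; running the local statement from the far end back toward $t_0$, and using the inductive hypothesis (all the intervening seeds are closer to $t_0$, or can be made so) to supply the two membership conditions $y\in\LL(t_1)$ and $y\in\LL(t_2)$, then yields $z\in\LL(t_0)$.

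The two technical inputs are, first, that no exchange polynomial $P_k$ is divisible by any cluster variable $x_l$ — which is immediate, since $x_l$ divides $P_k=\prod_r x_r^{b_{kr}^+}+\prod_r x_r^{-b_{kr}^-}$ only if $b_{kl}\ge 1$ and $b_{kl}\le -1$ simultaneously — together with pairwise coprimality of the \emph{neighbouring} exchange polynomials that occur along the caterpillar; and second, compatibility of the successive matrix mutations along the caterpillar, so that the same polynomial $P$ governs the clearing of denominators at both ends of each application of the local statement. This second point is a bookkeeping check using the explicit mutation rule for $b_{ij}'$ recalled above. The coprimality in the first point is where the structure of $B$, in particular skew-symmetrizability and the sign pattern in the mutation rule, genuinely enters, and I expect it to be the main obstacle: the entries of the mutated exchange matrices grow, and one must rule out accidental common factors among the binomials that arise. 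The standard way around it is a reduction to the rank-two situation — specializing all but two of the cluster variables, equivalently restricting attention to a $2\times 2$ submatrix of $B$ — which reduces the coprimality statement to the case $n=2$, where the Laurent phenomenon and the needed coprimality are verified by an explicit finite computation with the (periodic) rank-two exchange relations. With the local statement and these two inputs in place, the induction on $d(t_0,t)$ closes and the proposition follows.
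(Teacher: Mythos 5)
This proposition is not proved in the paper at all: it is quoted verbatim from Fomin--Zelevinsky \cite[Section 3]{FZI} and used as a black box, so there is no in-paper argument to compare yours against. What you have written is, in outline, the original Fomin--Zelevinsky proof (the tree $\mathbb{T}_n$, the local three-seed ``Caterpillar'' lemma, coprimality of exchange polynomials, reduction to rank two), and the outline is sound; your observation that uniqueness is automatic because a Laurent expansion of an element of $\mathfrak{F}$ is unique is also correct and is the only part that is genuinely complete.

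As a proof, however, the proposal has real gaps rather than omitted routine details. First, the local statement is only \emph{stated}: the actual content of the Caterpillar Lemma is showing that if $y\in\LL(t_1)\cap\LL(t_2)$ then, after clearing the powers of $x_i(t_1)=P/x_i(t_0)$ from the $\LL(t_1)$-expansion, the resulting numerator is divisible by the appropriate power of $P$ in $\LL(t_0)$ --- this is where the membership in $\LL(t_2)$ and the coprimality are actually used, and none of that argument appears. Second, the coprimality of consecutive exchange polynomials, which you correctly identify as the crux, is asserted rather than proved; the easy divisibility remark you do give (no $x_l$ divides $P_k$) is not the hard part. Third, the claimed ``explicit finite computation with the (periodic) rank-two exchange relations'' is misleading: rank-two exchange dynamics are periodic only in finite type ($|bc|\le 3$), so the rank-two reduction cannot be closed by citing periodicity; in Fomin--Zelevinsky's argument the relevant verifications are finite only because they concern a bounded window of consecutive seeds, not because the whole rank-two algebra is finite. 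Until the local lemma and the coprimality statement are actually proved, this is a correct plan for the known proof, not a proof.
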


Moreover, it has been conjectured for all cluster algebras, and proven in many cases (see
e.g.~\cite{MSW} and \cite{FST},\cite{FT})  that the coefficients of these polynomials are
positive.

Finally, we recall the definition of the lower bound of a cluster algebra $\AA$
corresponding to a seed $({\bf x}, B)$. Denote by $y_i$ for $1\le i\le m$ the
cluster variables obtained from ${\bf x}$ through mutation at $i$; i.e., they
satisfy the relation $x_iy_i=P_i$.
 \begin{definition}\cite[Definition 1.10]{BFZ}
\label{def:lower bounds}
Let $\AA$ be a cluster algebra and let $({\bf x}, B)$  be a seed. 
The lower bound $ \mathfrak{L}_B \subset \AA$ associated with $({\bf x}, B)$ is the algebra
 generated by the set  $\{x_1,\ldots x_n,y_1\ldots, y_m\}$.
\end{definition}

\subsection{Upper cluster algebras}
\label{se:upper cluster algebras}
 Berenstein, Fomin and Zelevinsky
introduced the related concept of upper cluster algebras in \cite{BFZ}.  

\begin{definition}
  Let $\mathfrak{A} \subset \mathfrak{F}$ be a
  cluster algebra with initial cluster $(x_1, \ldots, x_n, B)$ and let, as above, $y_1,
  \ldots, y_m$ be the cluster variables obtained by mutation in the directions
  $1, \ldots, m$, respectively.
  
  \noindent(a) The upper bound $\UU_{{\bf x},B} ( \mathfrak{A})$ is defined as
  \begin{equation} 
\UU_{{\bf x},B} ( \mathfrak{A}) = \bigcap_{j = 1}^m \CC [x_1^{\pm 1}, \ldots
     x_{j - 1}^{\pm 1}, x_j, y_j, x_{j + 1}^{\pm 1}, \ldots, x_m^{\pm 1},
x_{m+1},\ldots,x_n] \ . \end{equation}

\noindent(b) The upper cluster algebra $\UU  ( \mathfrak{A})$ is defined as
$$\UU  ( \mathfrak{A})=\bigcap_{({\bf x'},B')}\UU_{\bf x'} ( \mathfrak{A})\ ,$$
where the intersection is over all seeds $({\bf x}',B')$ in the mutation equivalence class of $({\bf x},B)$.
\end{definition}

Observe that each cluster algebra is contained in its upper cluster algebra (see \cite{BFZ}). 
 
\subsection{The Standard Example}
\label{se:ex intro}
We will now introduce  our standard example--the coordinate ring $\CC[G(2,5)]$ of the
 Grassmannian $G(2,5)$, which is the variety of two-dimensional subspaces of $\CC^5$. We
define it as the subalgebra of the functions on $2\times
5$-matrices $\CC[Mat_{2,5}]$, generated by the ten $2\times2$-minors,
$$\Delta_{ij}=x_{1i}x_{2j}-x_{2i}x_{1j}$$
 with $1\le i<j\le 5$.
It is well-known that the minors are subject to the Pl\"ucker relations
\begin{equation}
\label{eq:Pluecker rel}
\Delta_{ik}\Delta_{j\ell}=\Delta_{ij}\Delta_{k\ell}+\Delta_{i\ell}\Delta_{jk}\ ,
\end{equation}
for $1\le i<j<k<\ell\le 5$.
The algebra $\CC[G(2,5)]$ has a natural cluster algebra structure (see
e.g.\cite{Scott}).  We can choose an initial seed with cluster variables $x_1=\Delta_{13}$ and $x_2=\Delta_{14}$ and coefficients $x_3=\Delta_{12}$, $x_4=\Delta_{23}$,
$x_5=\Delta_{34}$, $x_6=\Delta_{45}$, $x_7=\Delta_{15}$.
The corresponding  exchange matrix is:
\begin{equation}
\label{eq:ex-matrix ex}
  B= \left(\begin{array}{ccccccc}
{{\bf 0}}&{{\bf 1}}&-1&1&-1&0&0\\
{{\bf -1}}&{\bf 0}&0&0&1&-1&1\\
\end{array}\right)\  .\end{equation} 
The exchange relations are  therefore:
\begin{equation}
\label{eq:plucker ex}
 \Delta_{13} \ y_1= \Delta_{14}\Delta_{23}+\Delta_{12}\Delta_{34} \ ,
 \end{equation}
$$ \Delta_{14} \ y_2= \Delta_{34}\Delta_{15}+\Delta_{13}\Delta_{45} \ .$$
We observe from Equation \ref{eq:Pluecker rel} that $y_1=\Delta_{24}$ and
$y_2=\Delta_{35}$.  Indeed, the minors $\Delta_{ij}$, with $1\le i<j\le 5$, form the set of cluster
variables.  The cluster algebra is a cluster algebra of  finite type $A_2$ in
the classification of \cite{FZII}.

\subsection{Poisson structures}
\label{se:poissonstructure}
Cluster algebras are closely related to Poisson algebras. In this section we recall some of the related notions and results. 
 
\begin{definition}
Let $k$ be a field of charactieristic $0$. A Poisson algebra is a pair
$(A,\{\cdot,\cdot\})$ of a commutative $k$-algebra $A$ and a bilinear map
$\{\cdot,\cdot\}:A\otimes A\to A$,  satisfying for all $a,b,c\in A$:
\begin{enumerate}
\item skew-symmetry: $\{a,b\}=-\{b,a\}$ 
\item Jacobi identity: $\{a,\{b,c\}\}+\{c,\{a,b\}\}+\{b,\{c,a\}\}=0$,
\item Leibniz rule: $a\{b,c\}=\{a,b\}c+b\{a,c\}$.
\end{enumerate}
\end{definition} 
If there is no room for confusion we will refer to a Poisson algebra $(A,\{\cdot,\cdot\})$ simply as  $A$.

 Gekhtman, Shapiro and
Vainshtein showed in \cite{GSV} that one can associate Poisson structures to
cluster algebras in the following way.   Let $\mathfrak{A} \subset \CC[x_1^{\pm 1}, \ldots,
x_n^{\pm 1}] \subset \mathfrak{F}$ be a cluster algebra. A Poisson structure
$\{\cdot, \cdot\}$ on
$\CC [x_1, \ldots, x_n]$ is called log-canonical if   $\{
x_i,x_j\}=\lambda_{ij} x_ix_j$ with $\lambda_{ij}\in \CC$ for all $1\le i,j\le n$.
 
The Poisson structure can be naturally extended to $\mathfrak{F}$ by using the identity $0=\{f\cdot f^{-1},g\}$  for all $f,g\in\CC [x_1, \ldots, x_n]$.  We thus obtain that  $\{f^{-1},g\}=-f^{-2}\{f,g\}$ for all $f,g\in \mathfrak{F}$. 
We call $\Lambda=\left( \lambda_{ij}\right)_{i,j=1}^n$ the {\it coefficient matrix} of
the Poisson structure. We say that a Poisson structure on  $\mathfrak{F}$ is compatible with
$\mathfrak{A}$ if it is log-canonical with respect to each cluster $(y_1,\ldots,
y_n)$; i.e., it is log canonical on $\CC[y_1, \ldots, y_n]$.

\begin{remark}
 A  classification of Poisson structures compatible with cluster algebras was obtained by Gekhtman, Shapiro and Vainshtein in \cite[Theorem 1.4]{GSV}.  
\end{remark}

We will refer to  the cluster algebra $\AA$ defined by the initial seeed
$({\bf x},B)$ together with the compatible Poisson structure  defined by the coefficient 
matrix $\Lambda$ with respect to the cluster ${\bf x}$ as the {\it Poisson cluster algebra}
defined by the {\it Poisson seed}  $({\bf x},B,\Lambda)$.

It is not obvious under which conditions  a Poisson seed $({\bf x},B,\Lambda)$ would yield a Poisson bracket $\{\cdot,\cdot\}_\Lambda$   on   $\mathfrak{F}$ such that $\{\AA,\AA\}_\Lambda\subset \AA$. We have, however, the following fact. 

\begin{proposition} 
Let   $({\bf x},B,\Lambda)$ be a Poisson seed and $\AA$ the corresponding cluster algebra. Then $\Lambda$ defines a Poisson algebra structure on the upper bound $\UU_{{\bf x},B}(\AA)$ and the upper cluster algebra $\UU(\AA)$.
\end{proposition}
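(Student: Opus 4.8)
The plan is to show that $\Lambda$ defines a Poisson bracket on each of the finitely many Laurent polynomial rings appearing in the definition of $\UU_{{\bf x},B}(\AA)$, and that these brackets are compatible under intersection; the claim for $\UU(\AA)$ then follows by a further intersection over all seeds. First I would observe that, since $\Lambda$ is log-canonical with respect to the cluster ${\bf x}=(x_1,\ldots,x_n)$, it extends (as already noted in Section \ref{se:poissonstructure}, via $\{f^{-1},g\}=-f^{-2}\{f,g\}$) to a Poisson bracket $\{\cdot,\cdot\}_\Lambda$ on the localization $\CC[x_1^{\pm1},\ldots,x_n^{\pm1}]$, hence on $\FF$. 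The content is to check that this ambient bracket on $\FF$ restricts to each of the ``mixed'' rings
$$
R_j=\CC[x_1^{\pm1},\ldots,x_{j-1}^{\pm1},x_j,y_j,x_{j+1}^{\pm1},\ldots,x_m^{\pm1},x_{m+1},\ldots,x_n]\ .
$$

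The key step, and the one I expect to be the main obstacle, is to verify that $\{R_j,R_j\}_\Lambda\subset R_j$. For the generators that are among the $x_k$ (or their inverses) this is immediate from log-canonicity. The real work is computing $\{x_k,y_j\}_\Lambda$ and $\{y_j,y_j\}_\Lambda=0$; here one must use the exchange relation $x_jy_j=P_j$ with $P_j=\prod_k x_k^{b_{jk}^+}+\prod_k x_k^{-b_{jk}^-}$, so that $y_j=P_j/x_j$, together with the Leibniz rule and the compatibility hypothesis built into the Poisson seed. Concretely, $\{x_k,y_j\}=\{x_k,P_j\}/x_j - (P_j/x_j^2)\{x_k,x_j\}$, and each $\{x_k,P_j\}$ is computed from $\{x_k,x_\ell\}=\lambda_{k\ell}x_kx_\ell$; the point is that the two monomials of $P_j$ are each eigenvectors for $\{x_k,\cdot\}$ (a product of the $x_\ell$ is log-canonical against $x_k$), so $\{x_k,P_j\}$ is a $\CC$-linear combination of those same two monomials, and after dividing by $x_j$ one lands back in $R_j$ — indeed one gets $\{x_k,y_j\}_\Lambda$ equal to a scalar times $x_ky_j$ plus a scalar times $x_k\,x_j^{-1}\prod_\ell x_\ell^{b_{j\ell}^+}$, both of which lie in $R_j$ once one invokes the identity between $\lambda$ and $B$ coming from compatibility. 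I would phrase this cleanly by noting that, with respect to the extended cluster ${\bf x}_j=(x_1,\ldots,\hat x_j,y_j,\ldots,x_n)$, the bracket $\{\cdot,\cdot\}_\Lambda$ is again log-canonical (this is exactly the compatibility of the Poisson structure with the cluster algebra, i.e.\ log-canonicity on $\CC[y_1,\ldots,y_n]$ restricted to the relevant variables), so $R_j$ is a localization of a polynomial ring on which $\{\cdot,\cdot\}_\Lambda$ is log-canonical, and any localization of a log-canonical Poisson algebra at a set of variables is again Poisson.

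Granting that each $R_j$ is a Poisson subalgebra of $(\FF,\{\cdot,\cdot\}_\Lambda)$, the upper bound $\UU_{{\bf x},B}(\AA)=\bigcap_{j=1}^m R_j$ is an intersection, inside the fixed Poisson field $\FF$, of Poisson subalgebras, hence itself closed under $\{\cdot,\cdot\}_\Lambda$: if $a,b\in R_j$ for every $j$ then $\{a,b\}_\Lambda\in R_j$ for every $j$. The Poisson algebra axioms (skew-symmetry, Jacobi, Leibniz) are inherited from $\FF$. Finally, for the upper cluster algebra, the very same argument applies verbatim: for every seed $({\bf x}',B')$ mutation-equivalent to $({\bf x},B)$, the coefficient matrix $\Lambda$ (transported to the cluster ${\bf x}'$ by the Poisson mutation rule of \cite{GSV}, under which compatibility is preserved) makes $\{\cdot,\cdot\}_\Lambda$ log-canonical on ${\bf x}'$, so by the case already treated $\UU_{{\bf x}',B'}(\AA)$ is a Poisson subalgebra of $\FF$; therefore $\UU(\AA)=\bigcap_{({\bf x}',B')}\UU_{{\bf x}',B'}(\AA)$ is an intersection of Poisson subalgebras of $\FF$ and hence a Poisson algebra. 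The only genuinely delicate point throughout is the bookkeeping in the computation $\{x_k,y_j\}_\Lambda\in R_j$, i.e.\ extracting from compatibility the precise numerical relation between $\Lambda$ and $B$ that makes the stray monomial $x_j^{-1}\prod_\ell x_\ell^{b_{j\ell}^+}$ land inside $R_j$ rather than merely in $\FF$.
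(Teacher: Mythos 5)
Your argument is correct and is essentially the paper's own proof: the paper likewise extends $\{\cdot,\cdot\}_\Lambda$ to the Laurent ring, checks that each mixed ring $\CC[x_1^{\pm 1},\ldots,x_j,y_j,\ldots,x_m^{\pm 1},x_{m+1},\ldots,x_n]$ is a Poisson subalgebra via the computation $\{x_j,y_j\}_\Lambda=\{x_j,x_j^{-1}P_j\}_\Lambda\in\CC[x_1,\ldots,x_n]$, and concludes by noting that an intersection of Poisson subalgebras is again a Poisson subalgebra. Your treatment of the brackets $\{x_k,y_j\}$ for $k\ne j$ via log-canonicity in the mutated cluster is just a more explicit version of the same step.
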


\begin{proof}
Denote as above by $\{\cdot,\cdot\}_\Lambda$ the Poisson bracket on $\mathfrak{F}$ by $\Lambda$.
  Observe  that  the algebras  $\CC[x_1^{\pm 1},\ldots x_{i-1}^{\pm 1}, x_i,y_i, x_{i+1}^{\pm 1}, \ldots, x_n^{\pm 1}]$   are Poisson subalgebras of the Poisson algebra $\CC[x_1^{\pm 1},\ldots x_n^{\pm 1}]$  for each $1\le i\le m$, as $\{x_i,y_i\}_\Lambda=\{x_i,x_i^{-1}P_i\}_\Lambda\in \CC[x_1,\ldots, x_n]$. If $A$ is a Poisson algebra and $\{B_i\subset A:i\in I\}$ is a family of Poisson subalgebras, then $\bigcap_{i\in I} B_i$ is a Poisson algebra, as well. The assertion follows.
\end{proof}

\subsection{Toric Actions}
\label{se:toric structure}
We recall the definitions and properties of local and global toric actions
from Gekhtman, Shapiro and Vainsthein \cite{GSV} (see also \cite{GSV B}) where they are introduced in the
context of cluster manifolds. As  discussed in \cite{GSV}, the cluster manifold  associated to a cluster algebra $\AA$ is not necessarily equal to the spectrum of maximal ideals of $\AA$, even when $\AA$ is Noetherian. For example the corresponding variety may have singularities  (see Example \ref{ex:Singularities}), and hence does not admit a manifold structure.
The main notions, however, carry over into our context.
 
 Let $X$ be an affine  variety such that $\mathfrak{A}=\CC[X]$  is a cluster
algebra or upper cluster algebra. Let ${\bf x}=(x_1,\ldots,x_n)$ be a cluster.  Following \cite[Section
2.3]{GSV} we define for each element
${\bf w}=(w_1,w_2,\ldots, w_n)\in \ZZ^n$ a {\it local toric action} of $\CC^{\ast}$ on
$\CC[x_1,\ldots, x_n]$
via maps $ \psi_{{\bf x},\alpha} : (x_1, \ldots, x_n) \mapsto (\alpha^{w_{ 1}}
x_1, \ldots, \alpha^{w_{n}} x_n)$ for all $\alpha \in \CC^{\ast}$. 
Assume now that we have chosen integer weights ${\bf w}_{\bf x}=(w_1,w_2,\ldots, w_n)$
for each cluster ${\bf x}$.  
The local
toric actions for two clusters are compatible if   the following diagram
commutes for any two clusters ${\bf x} = (x_1,  
  \ldots, x_n)$
and ${\bf y} = (y_1,\ldots, y_n)$, connected by a sequence of
mutations $T$:

$$\begin{xymatrix}{
\CC[{\bf x}]\ar[d]^{\psi_{{\bf x},\alpha}}\ar[r]^T&\CC[{\bf y}]\ar[d]^{\psi_{{\bf
y},\alpha}}\\
\CC[{\bf x}]\ar[r]^T& \CC[{\bf y}]
  }\end{xymatrix}\ .
   $$
 
Compatible local toric actions define a {\it global toric action} on the cluster
algebra and a {\it toric flow} on $X$. We have the following fact.

\begin{lemma}
  \cite[Lemma 2.3]{GSV} 
\label{le:compatible toric} Let $B$ denote the exchange matrix of the
  cluster algebra at the cluster ${\bf x}$. The local toric action  at ${\bf x}$
  defined by ${\bf w}\in \ZZ^n$  can be extended to a global toric action if and only
if $B
  \cdot { {\bf w}} = 0$. Moreover, if such an extension exists, it is unique.
\end{lemma}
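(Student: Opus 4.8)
The final statement in the excerpt is Lemma~\ref{le:compatible toric} (quoted from \cite[Lemma 2.3]{GSV}): with $B$ the exchange matrix at the cluster ${\bf x}$, the local toric action defined by ${\bf w}\in\ZZ^n$ extends to a global toric action if and only if $B\cdot{\bf w}=0$, and such an extension is unique.

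**Overall approach.** The plan is to reduce the assertion to a single mutation step, since a global toric action is by definition a choice of compatible local actions across all seeds, and any two seeds are connected by a finite chain of mutations. So the core is: given weights ${\bf w}={\bf w}_{\bf x}$ at ${\bf x}$ and a mutation in direction $k$ producing ${\bf x}_k$ with exchange matrix $B_k$, determine when there exist weights ${\bf w}'={\bf w}_{{\bf x}_k}$ making the square in the definition of compatible local toric actions commute, and show that the compatible ${\bf w}'$ is forced (hence uniqueness), and that the whole system of ${\bf w}$'s across all seeds is consistent exactly when $B{\bf w}=0$.

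**Key steps.** First I would write the mutation relation explicitly: $x_k' = x_k^{-1}P_k$ where $P_k=\prod_j x_j^{b_{kj}^+}+\prod_j x_j^{-b_{kj}^-}$, while $x_j'=x_j$ for $j\ne k$. Applying $\psi_{{\bf x},\alpha}$ first and then mutating, versus mutating first and then applying $\psi_{{\bf x}_k,\alpha}$, one computes how $x_k'$ transforms. The monomial $\prod_j x_j^{b_{kj}^+}$ scales by $\alpha^{\sum_j w_j b_{kj}^+}$ and $\prod_j x_j^{-b_{kj}^-}$ scales by $\alpha^{-\sum_j w_j b_{kj}^-}$; for $P_k$ to transform as a single monomial times $x_k^{-1}$ (so that $x_k' = x_k^{-1}P_k$ is an eigenvector), one needs these two exponents to be \emph{equal}, i.e.
\begin{equation}
\sum_{j=1}^n w_j b_{kj}^+ = -\sum_{j=1}^n w_j b_{kj}^- \ ,
\end{equation}
which says $\sum_j w_j b_{kj}=0$, i.e.\ the $k$-th row of $B$ annihilates ${\bf w}$. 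When this holds, $P_k$ scales by $\alpha^{c_k}$ with $c_k=\sum_j w_j b_{kj}^+$, and then forcing commutativity of the square dictates $w_k' = c_k - w_k$ and $w_j'=w_j$ for $j\ne k$; this both establishes existence of the compatible ${\bf w}'$ and its uniqueness. Next I would check that this prescription ${\bf w}\mapsto{\bf w}'$ is compatible with the matrix mutation rule, i.e.\ that $B_k {\bf w}' = 0$ whenever $B{\bf w}=0$ — a direct computation using the mutation formulas for $b_{ij}'$ and the definition $w_k' = \sum_j w_j b_{kj}^+ - w_k$. This is what makes the condition $B{\bf w}=0$ propagate consistently along every chain of mutations, so that a globally defined, well-defined family $\{{\bf w}_{\bf x}\}$ exists precisely when it holds at the initial seed. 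Conversely, if the local action at ${\bf x}$ extends globally, then in particular the one-step square commutes, forcing each row equation $\sum_j w_j b_{kj}=0$ for $1\le k\le m$, i.e.\ $B{\bf w}=0$.

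**Main obstacle.** The routine part is the scaling bookkeeping; the subtle point is well-definedness of the global action — one must check that the assignment ${\bf w}\mapsto{\bf w}'$ does not depend on the path of mutations connecting two seeds, which a priori requires checking consistency around the relations in the mutation groupoid (in particular that a direction-$k$ mutation is an involution on weights: applying the rule twice returns ${\bf w}$, using $b_{kj}^+ + b_{kj}^- = b_{kj}$ and $(-b_{kj})^{\pm}=-b_{kj}^{\mp}$). Once one knows $B{\bf w}=0$ forces $B_k{\bf w}'=0$ at every step and that the rule is involutive, path-independence follows because any two mutation sequences between fixed seeds differ by such relations, and at each seed ${\bf w}_{\bf y}$ is uniquely determined from $B_{\bf y}$-kernel membership together with the one-step formula. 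Since this lemma is quoted verbatim from \cite{GSV}, I would likely present only the one-step computation and the kernel-propagation check, and refer to \cite[Lemma 2.3]{GSV} (or \cite{GSV B}) for the remaining consistency details.
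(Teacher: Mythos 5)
The paper gives no proof of this lemma at all: it is imported verbatim as \cite[Lemma 2.3]{GSV}, so there is nothing in the text to compare your argument against. Your reconstruction is the standard one and is correct: the necessity of $\sum_j w_j b_{kj}=0$ comes from requiring the two monomials of $P_k$ to scale by the same power of $\alpha$ (they are linearly independent, so $x_k'=x_k^{-1}P_k$ is a weight vector only in that case), and the forced formula $w_k'=\sum_j w_j b_{kj}^+-w_k$, $w_j'=w_j$ for $j\ne k$, gives uniqueness one mutation step at a time and hence globally by induction along any mutation path. The two points you leave schematic --- that $B_k\mathbf{w}'=0$ propagates (a direct check with the mutation rule for $b_{ij}'$, using $b_{kk}=0$) and path-independence of the resulting system of weights --- are exactly the content of the cited Lemma 2.3 of \cite{GSV}, and deferring them there is consistent with how the paper itself treats the statement. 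No gap.
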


We shall now discuss how to obtain all Poisson structures compatible with a
cluster algebra $\mathfrak{A}$, given a   Poisson  seed $({\bf x},B,\Lambda)$ where  $B$ is an  $m\times
 n$-matrix. Denote  $k=n-m$. Let $C$ be an integer $n
\times k$ matrix. We
define an action of the torus $(\CC^\ast)^k$ on $\CC[x_1,\ldots, x_n]$ where ${\bf d}=
(d_1 \ldots, d_k) \in (\CC^{\ast})^k$ acts on $x_i$, $1\le i\le n$, as
\begin{equation}
\label{eq:toric action in class}{\bf d}\cdot_C x_i = x_i \prod_{j = 1}^m d_j^{c_{ij}} . \end{equation}
The local toric action extends to a global toric action of $(\CC^{\ast})^k$ on
${\bf x}$ if and only if $B \cdot C = 0$ by Lemma \ref{le:compatible toric}.
Notice that every
skew-symmetric $k \times k$-matrix $V$ defines a Poisson bracket on.
$(\CC^{\ast})^k$ with $\{x_i, x_j \}_V = v_{ij} x_i x_j$. One
obtains the following result.

\begin{proposition}
  \label{prop:class POisson} \cite[Proposition 2.2]{GSSV}  Let $\UU(\AA)$ be the Poisson upper cluster algebra defined by $({\bf x},B,\Lambda)$, and denote by $\{\cdot,\cdot\}_\Lambda$ the Poisson bracket.
  Let $\{\cdot,\cdot\}'$ be another compatible Poisson structure and let $\{\cdot,\cdot\}'_\lambda$ be the bracket defined by $\{a,b\}'_\lambda=\lambda \{a,b\}'$. Then there exists  a $n \times k$-integer matrix $C$
defining a
  global toric action,  a   skew-symmetric $k \times k $ matrix $V$ and $\lambda\in \CC$ such that
  the  action of Equation \ref{eq:toric action in class} extends to a homomorphism of Poisson algebras
 $$((\CC^{\ast})^m, \{\cdot, \cdot\}_V) \times (\UU(\AA),
     \{\cdot, \cdot\}_\Lambda) \longrightarrow (\UU(\AA),\{\cdot, \cdot\}_\lambda' )\ .$$   
\end{proposition}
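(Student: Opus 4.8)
The plan is to reduce the statement to the classification of compatible Poisson structures already available from Gekhtman--Shapiro--Vainshtein. Concretely, I would first fix the Poisson bracket $\{\cdot,\cdot\}_\Lambda$ on $\UU(\AA)$ coming from the seed $({\bf x},B,\Lambda)$, and let $\{\cdot,\cdot\}'$ be an arbitrary second compatible Poisson structure, with coefficient matrix $\Lambda'$ with respect to the same initial cluster ${\bf x}$. By the GSV classification (the cited \cite[Theorem 1.4]{GSV}), the space of coefficient matrices $\Lambda'$ that are compatible with the cluster structure forms an affine subspace: its ``linear part'' is governed by the condition that $\Lambda'$ be log-canonical with respect to every cluster, which pins down $\Lambda'$ modulo the matrices of the form $B^{T}W B$-type corrections; equivalently, after rescaling by some $\lambda\in\CC$, the difference $\Lambda - \lambda\Lambda'$ lies in the subspace spanned by expressions built from an $n\times k$ matrix $C$ with $BC = 0$ and a skew-symmetric $k\times k$ matrix $V$, giving $\Lambda - \lambda\Lambda' = C V C^{T}$ (up to the appropriate normalization in the cited references).

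Granting that decomposition, the second step is purely formal: take such a $C$ and $V$, note that $BC=0$ means by Lemma \ref{le:compatible toric} that the action of Equation \ref{eq:toric action in class} extends to a global toric action of $(\CC^{\ast})^{k}$ on $\UU(\AA)$, and that $V$ defines the log-canonical bracket $\{\cdot,\cdot\}_{V}$ on $(\CC^{\ast})^{k}$. One then has to check that the map
$$
\mu:\ ((\CC^{\ast})^{m},\{\cdot,\cdot\}_{V})\times(\UU(\AA),\{\cdot,\cdot\}_{\Lambda})\ \longrightarrow\ (\UU(\AA),\{\cdot,\cdot\}_{\lambda}')
$$
induced by Equation \ref{eq:toric action in class} is a Poisson map. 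Because everything is log-canonical on the initial cluster and on the torus, it suffices to verify the bracket identity on the generators $x_{i}$ of $\CC[x_{1},\dots,x_{n}]$ and on coordinate functions $d_{j}$ on the torus: on the torus factor the bracket is $\{d_{j},d_{\ell}\}_{V}=v_{j\ell}d_{j}d_{\ell}$, on the cluster-algebra factor it is $\{x_{i},x_{i'}\}_{\Lambda}=\lambda_{ii'}x_{i}x_{i'}$, and the comultiplication sends $x_{i}\mapsto x_{i}\prod_{j}d_{j}^{c_{ij}}$. A direct computation using the Leibniz rule shows that the pulled-back bracket of $x_{i}\prod_{j}d_{j}^{c_{ij}}$ with $x_{i'}\prod_{j}d_{j}^{c_{i'j}}$ equals $(\lambda_{ii'}+\sum_{j,\ell}c_{ij}v_{j\ell}c_{i'\ell})\,x_{i}x_{i'}\prod_{j}d_{j}^{c_{ij}+c_{i'j}}$, i.e. the coefficient matrix becomes $\Lambda + CVC^{T}$; choosing $C,V,\lambda$ as in the first step so that $\lambda\Lambda' = \Lambda + CVC^{T}$ makes $\mu$ a Poisson homomorphism. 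Since $\Lambda$ extends to a Poisson structure on all of $\UU(\AA)$ by the preceding proposition, and a Poisson map on the coordinate ring of the initial cluster that is log-canonical extends compatibly through all mutations (again by the GSV compatibility of $\Lambda$ and $V$ with the cluster structure), the identity propagates from the initial chart to the whole of $\UU(\AA)$.

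The main obstacle is the first step: extracting from the GSV classification the precise statement that \emph{every} compatible bracket $\{\cdot,\cdot\}'$ is, up to a scalar $\lambda$, of the form $\Lambda + CVC^{T}$ for some integer $C$ with $BC=0$ and skew-symmetric $V$. The cited classification describes compatible Poisson structures in terms of the data $(B,\Lambda)$ and certain linear-algebraic conditions relating $\Lambda$ to $B$; one needs to identify the quotient of the space of compatible coefficient matrices by the ``canonical'' one with the image of the map $(C,V)\mapsto CVC^{T}$, and to observe that one may clear denominators to take $C$ integral (which is where the scalar $\lambda\in\CC$ enters). Once this linear-algebra bookkeeping is in place, the rest of the proof is the routine Leibniz-rule verification sketched above, together with the remark that intersections of Poisson subalgebras are Poisson (used exactly as in the proof of the previous proposition) to pass from $\UU_{{\bf x},B}(\AA)$ to $\UU(\AA)$.
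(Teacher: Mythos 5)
The paper does not prove this proposition at all: it is imported verbatim from \cite[Proposition 2.2]{GSSV}, so there is no in-paper argument to compare yours against. Judged on its own terms, the second half of your outline is sound and is the standard route: the Leibniz-rule computation showing that the pullback of the bracket under $x_i\mapsto x_i\prod_j d_j^{c_{ij}}$ has coefficient matrix $\Lambda+CVC^{T}$ is correct, and the passage from the initial chart to all of $\UU(\AA)$ via intersections of Poisson subalgebras is exactly the mechanism the paper uses in the proposition immediately preceding this one. (You also implicitly and correctly read the $(\CC^{\ast})^{m}$ in the displayed map as $(\CC^{\ast})^{k}$, consistent with $V$ being $k\times k$.)

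The gap is the one you flag yourself, and it is not mere bookkeeping --- it is the entire content of the statement. You must show: for every compatible $\{\cdot,\cdot\}'$ with coefficient matrix $\Lambda'$ in the cluster ${\bf x}$, there exist $\lambda\in\CC$, an integer matrix $C$ with $BC=0$, and a skew-symmetric $V$ with $\lambda\Lambda'=\Lambda+CVC^{T}$. The actual extraction goes as follows: the GSV compatibility criterion (for $B$ of full rank) forces $B\Lambda'=(D'\ 0)$ with $D'$ diagonal, and $D'$ is proportional to the corresponding $D$ for $\Lambda$ (both must skew-symmetrize the principal part of $B$); choosing $\lambda$ with $\lambda D'=D$ gives $B(\lambda\Lambda'-\Lambda)=0$, so the columns of the skew-symmetric matrix $\lambda\Lambda'-\Lambda$ lie in $\ker B\otimes\QQ$, and by skew-symmetry so do the rows, whence $\lambda\Lambda'-\Lambda=CVC^{T}$ for any integer $C$ whose columns span $\ker B\otimes\QQ$ and a (rational, not necessarily integer) skew-symmetric $V$. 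Without this --- in particular without the observation that the proportionality of $D$ and $D'$ is precisely what produces the single scalar $\lambda$ --- your argument is a reduction to the cited classification rather than a proof. Since the paper itself only cites \cite{GSSV}, that reduction is arguably all that is expected here, but as a standalone proof it is incomplete.
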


\subsection{Toric Actions on Subalgebras}
\label{se:torus on subs}
Let $B$ be an exchange matrix as above and let $T=ker(B)$. Let ${\bf i}=\{x_{i_1},\ldots, x_{i_k}\}$ be a $k$-element subset of ${\bf x}$ and let for $\ell=n-k$ be $\{x_{j_1},\ldots, x_{j_\ell}\}={\bf x}-{\bf i}$. Denote by $\ZZ^{\bf i}$ the sublattice of $\ZZ^n$ spanned by $e_{i_1},\ldots, e_{i_k}$ and by $T_{\bf i}$ the quotient $T/\ZZ^{\bf i}$. The global toric actions act on $\CC[x_{j_1},\ldots, x_{j_\ell}]$ as follows: Let $t\in T$ and $\alpha \in \CC^\ast$ then
$$t(\alpha)x_{j_h}=t_{\bf i}(\alpha)  x_{j_h}\ , $$
where  $t_{\bf i}$ denotes the image of $t$ under the natural projection of $T$ onto $T_{\bf i}$. Notice that if $B$ is generic, then $rank(T_{\bf i})=max(rank(T), n-|{\bf i}|)$.
\subsection{Compatible Pairs and Their Mutation}
\label{se:Compatible Pairs and Mut}
Section \ref{se:Compatible Pairs and Mut} is dedicated to compatible pairs and their mutation. As we shall see below, compatible pairs yield important examples of Poisson brackets which are compatible with a given cluster algebra structure. Note that our  
 definition is slightly different from the original on in \cite{bz-qclust}. Let, as above, $m\le n$. 
Consider a pair consisting of a skew-symmetrizable $m\times n$-integer matrix
$B$ with rows labeled by the interval $[1,m]=\{1,\ldots, m\}$ and columns  labeled by   $[1,n]$
 together with a skew-symmetrizable $n\times n$-integer matrix  $\Lambda$  with rows and
columns labeled by $[1,n]$.   

\begin{definition}
\label{def:compa pair}
Let $B$ and $\Lambda$ be as above. We say that the pair $(B,\Lambda)$ is
compatible if the coefficients $d_{ij}$ of the $m\times n$-matrix $D=B\cdot \Lambda$ satisfy
$d_{ij}=d_i\delta_{ij}$
for some positive integers $d_i$ ($i\in [1,m]$).  
\end{definition}
This means that $D=B\cdot \Lambda$ is a $m\times n$ matrix where the only
non-zero entries are positive integers on the diagonal of the principal $m\times
m$-submatrix.

The following fact is obvious.

\begin{lemma}
\label{le:full rank}
Let $(B,\Lambda)$ be a compatible pair. Then $B\cdot \Lambda$ has full rank.
\end{lemma}

  Let  $(B,\Lambda)$  be a compatible pair and let $k\in [1,m]$. We define for
$\varepsilon\in \{+1,-1\}$ a $n\times n$ matrix $E_{k,\varepsilon}$ via 
 
 \begin{itemize}
  \item $(E_{k,\varepsilon})_{ij}=\delta_{ij}$ if  $j\ne k$,
  
\item   $(E_{k,\varepsilon})_{ij}= -1$ if   $i=j= k$,
 
\item  $(E_{k,\varepsilon})_{ij}= max(0,-\varepsilon b_{ki})$ if  $i\ne j= k$.

\end{itemize}

 Similarly, we define a $m\times m$ matrix $F_{k,\varepsilon}$ via 
 
  \begin{itemize}
  \item $(F_{k,\varepsilon})_{ij}=\delta_{ij}$   if  $i\ne k$,
  
\item   $(F_{k,\varepsilon})_{ij}= -1$ if   $i=j= k$,
 
\item  $(F_{k,\varepsilon})_{ij}= max(0,\varepsilon b_{jk})$ if  $i= k\ne j$.

\end{itemize}

  We define a new pair $(B_k,\Lambda_k)$ as
  \begin{equation}
  \label{eq:mutation matrix and Poisson}
   B_k=   F_{k,\varepsilon}  B   E_{k,\varepsilon}\ , \quad
\Lambda_k=E_{k,\varepsilon}^T \Lambda E_{k,\varepsilon}\ ,
   \end{equation}
  where $X^T$ denotes the transpose of $X$. We have the following fact.
  
  \begin{proposition}\cite[Prop. 3.4]{bz-qclust}
  \label{pr:comp under mutation}
  The pair  $(B_k,\Lambda_k)$ is compatible. Moreover, $\Lambda_k$ is
independent of the choice of the sign $\varepsilon$. 
  \end{proposition}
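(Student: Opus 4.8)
The plan is to reduce the statement to the single matrix identity $B\Lambda=D$ that defines compatibility, and then to transport it through the conjugations $B\mapsto B_k$, $\Lambda\mapsto\Lambda_k$. Two preliminary observations do most of the work. First, $E_{k,\varepsilon}$ and $F_{k,\varepsilon}$ are involutions: each agrees with the identity outside its $k$-th column, resp.\ $k$-th row, and a one-line check on that column (resp.\ row) gives $E_{k,\varepsilon}^{2}=I$ and $F_{k,\varepsilon}^{2}=I$. Second, compatibility already pins down the symmetrizer of $B$: reading $B\Lambda=D$ row by row and transposing gives $\Lambda\,B_{i\bullet}^{\,T}=-d_i e_i$ for $i\in[1,m]$, where $B_{i\bullet}^{\,T}\in\ZZ^{n}$ is the transpose of the $i$-th row of $B$ and we used that $\Lambda$ is skew-symmetric; pairing this identity with $B_{j\bullet}$ and comparing the resulting scalar with its transpose yields $d_j b_{ij}=-d_i b_{ji}$ for all $i,j\in[1,m]$. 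Since the $d_i$ are positive, this gives the sign identity $d_i\max(0,-\varepsilon b_{ki})=d_k\max(0,\varepsilon b_{ik})$ for $i,k\in[1,m]$, which is the combinatorial core of the proof.

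For the independence of $\Lambda_k$ from $\varepsilon$ I would argue directly. The matrices $E_{k,+1}$ and $E_{k,-1}$ coincide outside column $k$, and there the elementary identity $\max(0,t)-\max(0,-t)=t$ gives $E_{k,-1}=E_{k,+1}+B_{k\bullet}^{\,T}e_k^{\,T}$, a rank-one correction supported in column $k$. Substituting this into $E_{k,-1}^{\,T}\Lambda E_{k,-1}$ and expanding, the purely quadratic term vanishes by skew-symmetry of $\Lambda$, and the two cross terms collapse to $w\,e_k^{\,T}-e_k\,w^{\,T}$ with $w=E_{k,+1}^{\,T}\Lambda B_{k\bullet}^{\,T}$. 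But $\Lambda B_{k\bullet}^{\,T}=-d_k e_k$ by compatibility, and the $k$-th row of $E_{k,+1}$ is $-e_k^{\,T}$, so $w=d_k e_k$ and hence $w\,e_k^{\,T}-e_k\,w^{\,T}=0$. Thus $\Lambda_k$ is the same for both choices of sign.

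For compatibility of $(B_k,\Lambda_k)$ the idea is to let the involutivity collapse the product: two adjacent copies of $E_{k,\varepsilon}$ multiply to the identity, so $B_k\Lambda_k$ reduces to an expression $F'\,D\,E'$ with $F'$ obtained from $F_{k,\varepsilon}$ (elementary of size $m$) and $E'$ obtained from $E_{k,\varepsilon}$ (elementary of size $n$), each agreeing with the identity outside its $k$-th row and column. Since $D=\mathrm{diag}(d_1,\dots,d_m)$ padded with $n-m$ zero columns, and $F'$ alters $D$ only within row $k$ while $E'$ only within column $k$, away from the diagonal the only entries of $F'DE'$ that can be nonzero lie in row $k$ or column $k$, the coefficient columns $j>m$ included; and each such entry is either manifestly zero or a difference $d_i\max(0,-\varepsilon b_{ki})-d_k\max(0,\varepsilon b_{ik})$, which vanishes by the sign identity. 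The entry in position $(k,k)$ survives as the positive integer $d_k$. Hence $B_k\Lambda_k$ again has the compatible form with the same constants; in particular $B_k$ has skew-symmetrizable principal part with the same symmetrizer, so $(B_k,\Lambda_k)$ is a compatible pair.

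The hard part is this last bookkeeping step: one must expand $F'DE'$ entry by entry along row $k$ and column $k$, keep track of the signs of $\varepsilon b_{ki}$ versus $\varepsilon b_{ik}$, and verify that the coefficient columns $j>m$ stay zero — which is exactly where the relation $d_j b_{ij}=-d_i b_{ji}$ is used. Up to this computation and the precise sign conventions in the definitions of $E_{k,\varepsilon}$ and $F_{k,\varepsilon}$, the argument is that of \cite[Prop.~3.4]{bz-qclust}.
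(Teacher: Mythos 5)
The paper offers no proof of this proposition---it is quoted from \cite[Prop.~3.4]{bz-qclust}---so your write-up can only be measured against that source and against the conventions of Section~\ref{se:Compatible Pairs and Mut}. Your preliminary observations are all correct: $E_{k,\varepsilon}$ and $F_{k,\varepsilon}$ are involutions, the relation $d_jb_{ij}=-d_ib_{ji}$ does follow from $B\Lambda=D$ together with skew-symmetry of $\Lambda$, and your rank-one computation for the $\varepsilon$-independence of $\Lambda_k$ is complete: the quadratic term dies by skew-symmetry, and the cross terms are $we_k^T-e_kw^T$ with $w=E_{k,+1}^T\Lambda B_{k\bullet}^T=E_{k,+1}^T(-d_ke_k)=d_ke_k$, hence vanish. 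That half of the proposition is fully proved by your argument.

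The gap is in the collapse step for compatibility. With the definitions as printed, $B_k\Lambda_k=F_{k,\varepsilon}BE_{k,\varepsilon}\cdot E_{k,\varepsilon}^T\Lambda E_{k,\varepsilon}$, so the two adjacent factors are $E_{k,\varepsilon}$ and $E_{k,\varepsilon}^T$, and $EE^T=I+ue_k^T+e_ku^T+uu^T\ne I$: involutivity gives $E^2=I$ and hence $E^TE^T=I$, but not $EE^T=I$, since $E$ is not symmetric. The collapse you invoke is the one of \cite{bz-qclust}, where $\tilde B_k=E_\varepsilon\tilde BF_\varepsilon$ and therefore $\tilde B_k^T\Lambda_k=F_\varepsilon^T\tilde B^T(E_\varepsilon^2)^T\Lambda E_\varepsilon=F_\varepsilon^TDE_\varepsilon$; transposed into this paper's orientation that reads $B_k=F_{k,\varepsilon}^TBE_{k,\varepsilon}^T$, not $F_{k,\varepsilon}BE_{k,\varepsilon}$. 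This is not cosmetic: for the formula as printed the statement fails. Take $m=1$, $n=2$, $B=(0\ \ {-1})$, $\Lambda=\mat{0}{1}{-1}{0}$, so $B\Lambda=(1\ \ 0)$; then $E_{1,+1}=\mat{-1}{0}{1}{1}$, $F_{1,+1}=(-1)$, and $F_{1,+1}BE_{1,+1}=(1\ \ 1)$, which is not even the mutated exchange matrix $(0\ \ 1)$, while $\Lambda_1=E^T\Lambda E=\mat{0}{-1}{1}{0}$ gives $B_1\Lambda_1=(1\ \ {-1})$, not of the required diagonal form. With the corrected rule $B_k=F^TBE^T$ your argument goes through verbatim: $B_k\Lambda_k=F^TDE$, both $F^T$ and $E$ differ from the identity only in column $k\le m$, so the coefficient columns $j>m$ remain zero trivially (no sign identity is needed there, contrary to your final paragraph---it is needed only in column $k$), and the single nontrivial column is annihilated by $d_i\max(0,-\varepsilon b_{ki})=d_k\max(0,\varepsilon b_{ik})$, leaving $d_k$ in position $(k,k)$. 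So your strategy is the right one, but as written it silently assumes the Berenstein--Zelevinsky orientation of the mutation rule rather than the one stated in the paper.
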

 
 Proposition \ref{prop:class POisson} has the following obvious corollary.

\begin{corollary}
Let $\AA$ be a cluster algebra given by an initial seed $({\bf x}, B)$ where $B$ is a $m\times n$-matrix. If $(B,\Lambda)$ is a compatible pair, then $\Lambda$ defines a compatible Poisson bracket on $\mathfrak{F}$ and $\UU(\AA)$.
\end{corollary}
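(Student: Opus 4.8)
The plan is to combine the two preceding results: Proposition \ref{prop:class POisson} (via its corollary formulation for compatible pairs) together with Proposition \ref{pr:comp under mutation}, which guarantees that compatibility of a pair is preserved under mutation. The statement to be proved asserts that if $(B,\Lambda)$ is a compatible pair, then $\Lambda$ defines a compatible Poisson bracket on $\mathfrak{F}$ and on $\UU(\AA)$. Recall that ``compatible'' for a Poisson structure means it is log-canonical with respect to \emph{every} extended cluster ${\bf y}=(y_1,\ldots,y_n)$ in the mutation equivalence class, not merely with respect to the initial cluster ${\bf x}$.

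First I would observe that the bracket $\{x_i,x_j\}_\Lambda = \lambda_{ij} x_i x_j$ on $\CC[x_1,\ldots,x_n]$ determined by the skew-symmetric matrix $\Lambda$ is log-canonical with respect to ${\bf x}$ by construction, and extends to a Poisson bracket on $\mathfrak{F}$ by the rule $\{f^{-1},g\}=-f^{-2}\{f,g\}$ recalled in Section \ref{se:poissonstructure}. The real content is to show log-canonicity persists under a single mutation $\mu_k$ (for $k\in[1,m]$), since the general case then follows by induction on the length of a mutation sequence. So fix $k$ and let ${\bf x}_k$ be the mutated extended cluster with new variable $x_k'$ satisfying $x_k x_k' = P_k$. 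I would compute $\{x_k', x_j\}_\Lambda$ for $j\ne k$ and $\{x_i',x_j'\}_\Lambda$ and show each equals $(\text{scalar})\cdot(\text{product of the two cluster variables involved})$, where the scalar is exactly the corresponding entry of the mutated matrix $\Lambda_k = E_{k,\varepsilon}^T \Lambda E_{k,\varepsilon}$. The key structural input is the compatibility condition $B\cdot\Lambda = D$ with $D$ diagonal on its principal $m\times m$ block with positive integer entries $d_i$: this is precisely what makes the exchange polynomial $P_k = \prod x_\ell^{b_{k\ell}^+} + \prod x_\ell^{-b_{k\ell}^-}$ behave well under the bracket, i.e.\ it forces $\{x_k, P_k\}_\Lambda$ to be a scalar multiple of $x_k P_k$ (the monomial summands of $P_k$ differ in $\Lambda$-degree against $x_k$ by $\pm d_k$, so the cross terms in $\{x_k,P_k\}_\Lambda$ combine correctly). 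From $x_k' = x_k^{-1}P_k$ and the Leibniz/quotient rules one then reads off $\{x_k',x_j\}_\Lambda$, and the bookkeeping matches the definition of $E_{k,\varepsilon}$. For the mixed brackets $\{x_i',x_j'\}$ with $i,j\ne k$ there is nothing to check since those variables are unchanged. Thus $\{\cdot,\cdot\}_\Lambda$ is log-canonical on every cluster, i.e.\ compatible.

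The remaining clause, that $\Lambda$ defines a Poisson algebra structure on $\UU(\AA)$, is then immediate from the Proposition preceding this corollary (the one asserting $\Lambda$ defines a Poisson structure on $\UU_{{\bf x},B}(\AA)$ and on $\UU(\AA)$): we have just verified the hypothesis that $\Lambda$ gives a genuine compatible bracket on $\mathfrak{F}$, and that Proposition then delivers $\{\UU(\AA),\UU(\AA)\}_\Lambda\subset\UU(\AA)$. So the corollary really is a two-line deduction once Proposition \ref{pr:comp under mutation} and Proposition \ref{prop:class POisson} are in hand.

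I expect the main obstacle to be purely notational: carefully checking that the scalar appearing in $\{x_k',x_j\}_\Lambda$ agrees on the nose with $(E_{k,\varepsilon}^T\Lambda E_{k,\varepsilon})_{k j}$, including the sign conventions in $r^+,r^-$ and the $\max(0,-\varepsilon b_{k i})$ entries of $E_{k,\varepsilon}$, and confirming the sign-independence asserted in Proposition \ref{pr:comp under mutation}. Since that sign-independence and the compatibility of $(B_k,\Lambda_k)$ are \emph{already proved} in \cite{bz-qclust} and quoted above, I would simply cite them rather than reproduce the computation; the only thing genuinely needed here is the passage from ``the pair mutates compatibly'' to ``the bracket is log-canonical on the mutated cluster,'' which is the short argument sketched in the previous paragraph. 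Hence the proof is essentially a citation plus the observation that log-canonicity on each cluster is exactly the definition of compatibility.
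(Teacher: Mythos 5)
Your overall strategy is sound and, if anything, more substantive than what the paper offers: the paper states this as an ``obvious corollary'' of Proposition \ref{prop:class POisson} and supplies no argument at all, whereas you actually verify the definition of compatibility by checking that log-canonicity survives a single mutation, invoke Proposition \ref{pr:comp under mutation} to identify the new coefficient matrix, and then cite the unnumbered Proposition of Section \ref{se:poissonstructure} for the clause about $\UU(\AA)$. That is the right skeleton, and your handling of the $\UU(\AA)$ part and of the unchanged brackets $\{x_i,x_j\}$ for $i,j\ne k$ is correct.

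However, the one computation you single out as ``the key structural input'' is stated incorrectly, and as written it is false. You claim that compatibility forces $\{x_k,P_k\}_\Lambda$ to be a scalar multiple of $x_kP_k$. It does not: writing $P_k=m_k^++m_k^-$, the scalars in $\{x_k,m_k^+\}_\Lambda$ and $\{x_k,m_k^-\}_\Lambda$ differ by exactly $(B\Lambda)_{kk}=d_k>0$, so $\{x_k,P_k\}_\Lambda$ is precisely \emph{not} proportional to $x_kP_k$; indeed the paper exploits this non-proportionality in the proof of Theorem \ref{th:acyclic}. What compatibility actually buys you is the vanishing of the \emph{off-diagonal} entries: for $j\ne k$ one has $(B\Lambda)_{kj}=0$, so the two monomials of $P_k$ have the same $\Lambda$-degree against $x_j$, hence $\{x_j,P_k\}_\Lambda$ is a scalar multiple of $x_jP_k$, and therefore $\{x_j,x_k'\}_\Lambda=\{x_j,x_k^{-1}P_k\}_\Lambda$ is log-canonical. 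The bracket $\{x_k,x_k'\}_\Lambda$ never needs to be log-canonical, because $x_k$ and $x_k'$ do not lie in a common cluster. With the roles of the diagonal and off-diagonal parts of $B\Lambda$ corrected in this way, the rest of your argument (induction on the length of the mutation sequence, matching the resulting scalars with $E_{k,\varepsilon}^T\Lambda E_{k,\varepsilon}$, and the citation for $\UU(\AA)$) goes through.
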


\begin{example}
 If $m=n$ (i.e.~there are no coefficients/frozen variables) and $B$ has full rank, then $(B, \mu B^{-1})$ is a compatible pair for all $\mu\in \ZZ_{> 0}$ such that $\mu B^{-1}$ is an integer matrix.
\end{example}
\begin{remark}
Another important example is the following.  Recall that double Bruhat cells in complex semisimple connected and simply connected algebraic groups have a natural structure of an upper cluster algebra (see \cite{BFZ}). Berenstein and Zelevinsky showed that the standard Poisson structure is given by compatible pairs relative to this upper cluster algebra structure (see \cite[Section 8]{bz-qclust}).  
\end{remark}

\subsection{Our running example}
\label{se:ex Poisson}
\subsubsection{The standard Poisson structure} In the case of $\CC[G(2,5)]$ we
have the  so called standard Poisson structure which is compatible with the
cluster algebra structure. It is most easily defined as the restriction of the standard Poisson bracket on  $\CC[Mat_{2,5}]$   to the subalgebra $\CC[G(2,5)]$: The standard Poisson bracket on $\CC[Mat_{2,n}]$ is given by
$$\{x_{ij},x_{k\ell}\}=\left(sgn(i-k)+sgn(j-\ell)\right)x_{i\ell}
x_{kj}\ ,$$
where $i,j\in\{1,2\}$ and $k,\ell\in[1,n]$ and $sgn$ denotes the sign function.

We observe that the Poisson bracket in the cluster 
$$\left(  \Delta_{13},\    \Delta_{14},\  \Delta_{12},\ \Delta_{23},\ \Delta_{34},\
\Delta_{45},\ \Delta_{15}\right)\ $$
is given by the matrix
\begin{equation}
\label{eq:Poisson ex matrix}
 \Lambda= \left(\begin{array}{ccccccc}
{\bf 0}&{\bf-1}&1&-1&-1&-2&-1\\
{\bf 1}&{\bf 0}&1&0&-1&-2&-1\\
-1&-1& 0&-1 &-2&-2 &-1 \\
1&0 &1 &0 &-1 &-2 &0 \\
1&1 &2 &1 &0 &-1 &0 \\
2&1 &2 &2 &1 &0 &1 \\
1&1 &1 &0 &0 &-1 &0 \\
\end{array}\right)\ . 
\end{equation}
It can be verified by direct computation that $(B,\Lambda)$ is a compatible
pair. 
\subsubsection{The toric actions}
\label{se:toric actions example}
The torus actions on the cluster algebra are given by the usual torus actions on
the Grassmannian, i.e.~the action of $(\CC^\ast)^2\times (\CC^\ast)^5$ on $M\in
Mat_{2,5}$ via left-, resp.~right-multiplication by diagonal matrices: 
$$ \left(\begin{array}{cc}
\ell_1&0\\
0&\ell_2\\
\end{array}\right)  \cdot  
M\cdot  \left(\begin{array}{ccccc}
r_1&0&0&0&0\\
0&r_2&0&0&0\\
0&0&r_3&0&0\\
0&0&0&r_4&0\\
0&0&0&0&r_5\\
\end{array}\right)  \ . $$

The weights of these actions on the initial cluster are:
$$wt(\ell_1)=wt(\ell_2)=(1,1,1,1,1,1,1) ,\  wt(r_1)=(1,1,1,0,0,0,1) ,\  
wt(r_2)=(0,0,1,1,0,0,0) \ , $$
$$wt(r_3)=(1,0,0,1,1,0,0),\  wt(r_4)=(0,1,0,0,1,1,0),\ wt(r_5)=(0,0,0,0,0,1,1)\ 
.$$
It is easy to verify that the weights span the kernel of $B$.

\section{Toric Poisson Prime Ideals}
\label{se:quotient cluster alg}

\subsection{The  Main Theorem}
 In this section we will prove the main result of the paper. The main theorem requires our seeds  to be generic in some sense. However, the parts of the proof each hold in some more generality, and we will need the more general versions later on. Therefore, we will introduce our restictions one by one throughout the section. First, we recall the definitions of Poisson, Poisson prime and toric Poisson prime ideals.
 
 \begin{definition} 
  Let $(A, \{\cdot, \cdot\})$ be a Poisson algebra over $\CC$ and $T$ an algebraic torus acting on $\AA$ by automorphisms.
  
  \noindent(a) A Poisson ideal
  $\II \subset A$ is an ideal which is closed under the Poisson bracket; i.e.,
  $\{h, a\} \in \II$ if $h \in \II$ and $a\in \AA$.

\noindent(b) A Poisson prime ideal is a Poisson ideal which is prime.
  
\noindent(c)  A toric Poisson prime ideal (TPP for short) is a Poisson ideal which is prime and torus invariant.
\end{definition}

    Now suppose that ${\bf x}=(x_1,\ldots,x_n)$ and that $B$ is a $m\times n$-integer matrix with skew-symmetrizable principal part (i.e.~the corresponding cluster algebra has rank $n$ and $n-m$ coefficients).  Let $\Lambda$ be a skew-symmetric integer matrix such that $({\bf x}, B,\Lambda)$ defines a Poisson cluster algebra. We have the following main result (the precise statement follows in Theorem \ref{th:Stratification total 2}).
\begin{maintheorem}
\label{th:Stratification total} 
Let $\mathfrak{A}$ be a Noetherian Poisson cluster algebra  over the complex numbers, corresponding to the triple
$({\bf x}, B,\Lambda)$ introduced above. Let $X$ be an affine variety such that $\mathfrak{A}=\CC[X]$ is its coordinate ring. If  the seed is generic (for precise definitions see below),  then  $\AA$  contains only finitely many Poisson prime ideals which are invariant under the global toric actions.  
\end{maintheorem}

\begin{remark}
 The proof also works if $\AA$ is a Noetherian upper cluster algebra or an upper bound. 
\end{remark}

\begin{remark}
Recall that if  $(B,\Lambda)$ is a compatible pair, then $B\cdot \Lambda$ has full rank (see Lemma \ref{le:full rank}).
\end{remark}


We observe the following fact.

\begin{proposition}
Let $\AA$ be a Noetherian Poisson cluster algebra given by $({\bf x},B,\Lambda)$, and let  $({\bf x},B,\Lambda')$ define another compatible Poisson structure. If $\II$ is a TPP for $({\bf x},B,\Lambda)$, then it is also a TPP for  $({\bf x},B,\Lambda')$.
\end{proposition}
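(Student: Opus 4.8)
The plan is to show that the two compatible Poisson structures $\{\cdot,\cdot\}_\Lambda$ and $\{\cdot,\cdot\}_{\Lambda'}$ differ, up to scaling, by a bracket coming from the torus of global toric actions, so that torus-invariance plus being Poisson for one bracket forces the same for the other. Concretely, I would invoke Proposition \ref{prop:class POisson}: since both $\Lambda$ and $\Lambda'$ define compatible Poisson structures on $\UU(\AA)$ with the same initial exchange matrix $B$, there is an $n\times k$ integer matrix $C$ with $B\cdot C=0$ (so $C$ defines a global toric action), a skew-symmetric $k\times k$ matrix $V$, and a scalar $\lambda\in\CC$ relating them. Unwinding the proposition at the level of coefficient matrices on the cluster ${\bf x}$, this says $\Lambda' = \lambda^{-1}\Lambda + C\, V\, C^{T}$ (or a similar explicit formula); the point is that the ``correction term'' $C V C^{T}$ is built entirely from the toric data. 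Equivalently, for cluster variables $x_i,x_j$ one gets $\{x_i,x_j\}_{\Lambda'} = \lambda^{-1}\{x_i,x_j\}_{\Lambda} + (\text{something})\cdot x_i x_j$ where the extra coefficient is the $V$-pairing of the $i$-th and $j$-th rows of $C$.

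Next I would exploit the fact that a global toric action of $(\CC^\ast)^k$ acts on $\AA$ by Poisson automorphisms for the bracket $\{\cdot,\cdot\}_V$ transported via $C$, and hence the associated infinitesimal action sends any element $a$ to a $\CC$-linear combination of ``weight components'' of $a$. For a torus-invariant ideal $\II$, each weight component of an element of $\II$ again lies in $\II$ (standard fact, presumably the content of the appendix on torus-invariant prime ideals). Therefore if $h\in\II$ and $a\in\AA$, the ``toric part'' of the bracket, $\{h,a\}_{\Lambda'} - \lambda^{-1}\{h,a\}_{\Lambda}$, which is (a finite sum of) products of weights against weight components of $h$ and $a$, lies in $\II$ because each weight component of $h$ does. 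Since $\II$ is Poisson for $\Lambda$ we also have $\{h,a\}_\Lambda\in\II$, hence $\lambda^{-1}\{h,a\}_\Lambda\in\II$, and adding we conclude $\{h,a\}_{\Lambda'}\in\II$. Thus $\II$ is a Poisson ideal for $\Lambda'$; it is still prime and torus-invariant by hypothesis, so it is a TPP for $({\bf x},B,\Lambda')$.

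The main obstacle is making precise the claim that ``the difference of the two compatible brackets is implemented by the torus action,'' i.e.\ extracting from Proposition \ref{prop:class POisson} the explicit relation $\{h,a\}_{\Lambda'} = \lambda^{-1}\{h,a\}_\Lambda + \partial_V(h,a)$ where $\partial_V$ is a biderivation factoring through the infinitesimal toric action, and checking that $\partial_V$ maps $\II\times\AA$ into $\II$. This requires that the derivations generating the toric action (the vector fields $\xi_i = \sum_j c_{ji} x_j \partial_{x_j}$) preserve $\AA$ and preserve $\II$ — the former is exactly that $C$ defines a global toric action, the latter is torus-invariance of $\II$ differentiated. One subtlety to handle is the scalar $\lambda$: a priori $\lambda$ could be $0$, in which case $\Lambda'$ itself would be purely toric and the argument still goes through (then $\{h,a\}_{\Lambda'}=\partial_V(h,a)\in\II$ directly); if $\lambda\ne 0$ the argument is as above. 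A final routine check is that it suffices to verify the Poisson-ideal property on algebra generators $a$ of $\AA$ (cluster variables), which follows from the Leibniz rule since $\II$ is an ideal.
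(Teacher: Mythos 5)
Your proof is correct and takes essentially the same route as the paper: the paper's own proof is a two-line appeal to Proposition \ref{prop:class POisson} followed by the observation that the remaining (toric) contribution to the bracket is handled by torus-invariance of $\II$, which is precisely the argument you spell out. Your write-up is a faithful, more detailed expansion of that terse argument, including the infinitesimal form of invariance and the $\lambda=0$ edge case.
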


\begin{proof}
By Proposition \ref{prop:class POisson} it suffices to show that if $t\in T$ is an element of the torus of global toric actions  and $f\in \II$, then $t.f\in \II$, as well. But this is the  definition of torus invariant.
\end{proof}

Additionally, notice the following fact.
\begin{lemma}
Let $\mathfrak{A}$ be a Noetherian cluster algebra over the complex numbers, and
let $\{\cdot,\cdot\}$ be a compatible Poisson structure. Then, the global toric
actions are compatible with the Poisson structure, i.e.
$$\{t.x,t.y\}=t.\{x,y\}\ ,\quad \rm{for\  all }\ x,y\in \AA\ .$$ 
\end{lemma}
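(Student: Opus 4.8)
The plan is to prove the (formally stronger) statement that every element $t$ of the torus $T$ of global toric actions acts on the whole field $\mathfrak{F}=\CC(x_1,\dots,x_n)$ as a Poisson automorphism. Two observations drive the argument. First, since $\{\cdot,\cdot\}$ is compatible with $\mathfrak{A}$ it is log-canonical on the initial cluster, $\{x_i,x_j\}=\lambda_{ij}x_ix_j$ with $\lambda_{ij}\in\CC$. Second, by the very construction of local, hence global, toric actions, $t$ acts on $\mathfrak{F}$ by an algebra automorphism that merely rescales the cluster variables: $t.x_i=c_i(t)\,x_i$ for some $c_i(t)\in\CC^\ast$; in particular $t.$ preserves $\CC[x_1,\dots,x_n]$, $\CC[x_1^{\pm1},\dots,x_n^{\pm1}]$ and $\mathfrak{A}$.

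First I would form the pulled-back bracket $\{a,b\}_t:=t^{-1}.\{t.a,t.b\}$ on $\mathfrak{F}$. Because $t.$ is an algebra automorphism, a routine check (skew-symmetry and the Leibniz rule are immediate; for Jacobi one pulls $t^{-1}.$ out of the cyclic sum and uses Jacobi for the original bracket) shows $\{\cdot,\cdot\}_t$ is again a Poisson bracket — for the present purpose we only need that it is a $\CC$-biderivation. The lemma is then equivalent to the equality $\{\cdot,\cdot\}_t=\{\cdot,\cdot\}$ on $\mathfrak{A}$: applying $t.$ to $t^{-1}.\{t.x,t.y\}=\{x,y\}$ gives exactly $\{t.x,t.y\}=t.\{x,y\}$.

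To obtain that equality I would first verify it on the generators $x_1,\dots,x_n$ of the polynomial ring: $\{x_i,x_j\}_t=t^{-1}.\{c_i(t)x_i,\,c_j(t)x_j\}=c_i(t)c_j(t)\lambda_{ij}\,t^{-1}.(x_ix_j)=c_i(t)c_j(t)\lambda_{ij}\,c_i(t)^{-1}c_j(t)^{-1}x_ix_j=\lambda_{ij}x_ix_j=\{x_i,x_j\}$. Now $\{\cdot,\cdot\}-\{\cdot,\cdot\}_t$ is a $\CC$-biderivation of $\mathfrak{F}$ vanishing on all pairs $(x_i,x_j)$; fixing one argument gives a $\CC$-derivation of $\mathfrak{F}$ vanishing on $x_1,\dots,x_n$, hence (by the Leibniz rule and the quotient rule $d(f/g)=(g\,d(f)-f\,d(g))/g^2$) vanishing on $\CC[x_1,\dots,x_n]$ and therefore on $\mathfrak{F}$. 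Thus $\{\cdot,\cdot\}_t=\{\cdot,\cdot\}$ on $\mathfrak{F}$, in particular on $\mathfrak{A}\subset\mathfrak{F}$, which finishes the proof.

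The one point that requires care is that the single cluster $(x_1,\dots,x_n)$ generates $\mathfrak{A}$ only as a field, not as an algebra ($\mathfrak{A}$ is generated by the cluster variables of all seeds), so the comparison of biderivations must be run at the level of $\mathfrak{F}$ rather than of $\mathfrak{A}$ directly; this is harmless because a Poisson bracket on a domain extends uniquely to its fraction field and is determined there by its restriction. (Alternatively, one could note that the global toric action makes $\CC[x_1^{\pm1},\dots,x_n^{\pm1}]$, and hence $\mathfrak{A}$, a $\ZZ^{\dim T}$-graded algebra on which the bracket has degree $0$, and compare $T$-weights of the components of $x$ and $y$; but the fraction-field argument is shorter.) No information about the mutation combinatorics of $\mathfrak{A}$ is needed.
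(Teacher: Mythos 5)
Your proof is correct and rests on exactly the two facts the paper's own (very terse) proof invokes: the bracket is log-canonical on a cluster and the global toric action rescales that cluster's variables, so everything reduces to a check on Laurent monomials in $x_1,\dots,x_n$. The only cosmetic difference is that you organize the "straightforward computation" as a comparison of biderivations on $\mathfrak{F}$ agreeing on the generators $x_i$, whereas the paper expands $x$ and $y$ directly as Laurent polynomials; the substance is the same.
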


\begin{proof}
Express $x$ and $y$ as Laurent polynomials in a cluster. 
The assertion is  now proved by straightforward computation, using the fact that
the Poisson bracket is log-canonical in the cluster variables.
\end{proof}

   Let us return to   Theorem \ref{th:Stratification total} which  we will prove  in four steps which contain several independent and important results.

 \subsection{Step 1: The set $TP_Y$}  

Let ${\bf x}$ be a cluster. We will, for convenience, also use the notation ${\bf x}$ to denote the set ${\bf x}=\{x_1,\ldots,x_n\}$.  Denote, as before, by $y_1, \ldots,
y_m$ the
cluster variables obtained by mutation in the directions $1, \ldots,
m$, respectively. Let 
$$Y=\{x_1, \ldots, x_n,y_1,\ldots, y_m,\}\cup\{1\}\ .$$


 Denote by $I_S$ the Poisson ideal generated by a subset $S\subset Y$. Notice that if $1\in S$ then, of course, $I_S=\AA$. Denote by $J_S\subset \CC[x_1,\ldots, x_n]$ the ideal generated by $S\subset {\bf x}$ in $\CC[x_1,\ldots, x_n]$. The ideal $J_S$ is torus invariant Poisson and prime in the polynomial ring $\CC[x_1,\ldots, x_n]$. Denote by $M_S$ the monoid consisting of all  monomials in $\{x_{j_1},\ldots, x_{j_\ell}\}={\bf x}-S$; i.e., a typical element of $M_S$ is a monomial $x^{\beta}$ where $0\ne \beta\in \ZZ_{\ge 0}^n$ and $\beta_i=0$ if $x_i\in S$.

We define a subset $TP_Y$ of the power set of $Y$ as follows. 

\begin{definition}
\label{def:TPP}
The set $TP_Y$ is  the set whose elements are the subsets of $S\subset Y$ such that  

\begin{enumerate}
\item $I_S\cap Y=S$,
\item $I_S\cap\CC[x_1,\ldots,x_n]=J_S$.

\end{enumerate} 
\end{definition}

 Definition \ref{def:TPP} implies the following lemma.

\begin{lemma}
\label{le:TP implies}
Let $S\in TP_Y$. Then,
\begin{enumerate}
\item $I_S\cap M_S=\emptyset$, and 
\item  $x_i\in S$ or $y_i\in S$ is equivalent to  $P_i\in J_S$.
\end{enumerate}
\end{lemma}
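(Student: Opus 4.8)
Assume $S \in TP_Y$, so that by Definition \ref{def:TPP} we have $I_S \cap Y = S$ and $I_S \cap \CC[x_1,\ldots,x_n] = J_S$. The plan is to derive statement (1) from the second condition together with elementary facts about monomial ideals, and statement (2) from a combination of the first condition, the exchange relations $x_i x_i' = P_i$, and the fact that $I_S$ is prime.

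For statement (1): a typical element of $M_S$ is a monomial $x^\beta$ with $0 \neq \beta \in \ZZ_{\geq 0}^n$ supported away from $S$. Since $I_S \cap \CC[x_1,\ldots,x_n] = J_S$, it suffices to show $x^\beta \notin J_S$. But $J_S$ is the ideal of $\CC[x_1,\ldots,x_n]$ generated by the variables lying in $S$, hence it is a monomial prime ideal, and a monomial lies in $J_S$ if and only if at least one variable from $S$ divides it. By construction no variable from $S$ divides $x^\beta$, so $x^\beta \notin J_S$, and therefore $x^\beta \notin I_S$. Since $\beta$ was arbitrary, $I_S \cap M_S = \emptyset$. (One should also note the degenerate case $1 \notin S$: if $1 \in S$ then $I_S = \AA$ and $TP_Y$-membership forces $I_S \cap Y = Y \neq S$ unless $S = Y$, but even then $M_Y = \emptyset$ and the statement is vacuous; so we may assume $1 \notin S$.)

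For statement (2): recall $x_i x_i' = x_i y_i = P_i$, where $P_i = \prod_k x_k^{b_{ik}^+} + \prod_k x_k^{-b_{ik}^-}$ is an element of $\CC[x_1,\ldots,x_n]$. If $x_i \in S$ or $y_i \in S$, then $x_i \in I_S$ or $y_i \in I_S$, so $P_i = x_i y_i \in I_S$; since $P_i \in \CC[x_1,\ldots,x_n]$ this gives $P_i \in I_S \cap \CC[x_1,\ldots,x_n] = J_S$. Conversely, suppose $P_i \in J_S \subseteq I_S$. Then $x_i y_i = P_i \in I_S$, and since $I_S$ is prime we get $x_i \in I_S$ or $y_i \in I_S$. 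In either case the element in question lies in $I_S \cap Y$, which equals $S$ by hypothesis; hence $x_i \in S$ or $y_i \in S$. This completes the equivalence.

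The routine points are genuinely routine here; the only place requiring a little care is the degenerate handling of $1 \in S$ and making sure the monomial-ideal characterization of membership in $J_S$ is invoked cleanly. The substantive input — that $I_S$ is prime, that $I_S$ contracts to exactly $J_S$ in the polynomial ring, and the exchange relation $x_i y_i = P_i$ — is all available, so I expect no real obstacle; the main thing to get right is simply marshalling these facts in the correct order rather than overcomplicating the argument.
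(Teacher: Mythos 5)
Part (1) and the forward implication of part (2) are correct and are essentially the only arguments available: $M_S\subset\CC[x_1,\ldots,x_n]$, so $I_S\cap M_S\subseteq I_S\cap\CC[x_1,\ldots,x_n]=J_S$, and no monomial supported off $S$ lies in the monomial ideal $J_S$; likewise $P_i=x_iy_i\in I_S$ whenever $x_i$ or $y_i$ lies in $S$, and since $P_i\in\CC[x_1,\ldots,x_n]$ this forces $P_i\in J_S$. (The paper gives no written proof of this lemma, only the assertion that it follows from Definition \ref{def:TPP}, so there is no official argument to compare against; your handling of the degenerate case is also harmless, though it is cleaner to note that $1\in S$ already violates condition (2) of the definition, since then $I_S\cap\CC[x_1,\ldots,x_n]=\CC[x_1,\ldots,x_n]\neq J_S$.)

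The converse implication of part (2) is where you have a genuine gap: you write ``since $I_S$ is prime we get $x_i\in I_S$ or $y_i\in I_S$,'' but $I_S$ is defined only as the Poisson ideal generated by $S$, and neither that definition nor the two conditions defining $TP_Y$ assert that $I_S$ is prime. Only $J_S$ is stated to be prime, and that is primeness inside $\CC[x_1,\ldots,x_n]$, where the factorization $P_i=x_iy_i$ is unavailable because $y_i$ is in general not a polynomial in $x_1,\ldots,x_n$. Indeed, the paper itself later passes to minimal primes over (a localization of) $I_S$ precisely because $I_S$ need not be prime. So from $P_i=x_iy_i\in J_S\subseteq I_S$ you cannot conclude that one of the two factors lies in $I_S$; knowing that $J_S$ is a monomial ideal only tells you that each of the two monomial terms of $P_i$ is divisible by some variable of $S$, which says nothing about $x_i$ or $y_i$. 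In the situations where the lemma is actually used, $S$ arises as $\II\cap Y$ for a TPP $\II$ (Theorem \ref{th:intersection}), and there the conclusion does follow --- from the primeness of $\II$, since $P_i\in J_S\subseteq\II$ gives $x_i\in\II\cap Y=S$ or $y_i\in\II\cap Y=S$. To prove the converse for an abstract $S\in TP_Y$ you would need either to import such a prime $\II$ (or primeness of $I_S$) as an additional hypothesis, or to find an argument using only the two conditions of Definition \ref{def:TPP}; your proposal does neither, and simply listing ``$I_S$ is prime'' among the available inputs does not make it available.
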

 
\begin{remark}
When determining the elements of $TP_Y$, it is clearly most difficult to verify that $I_S\cap Y=S$. However, we conjecture (see Conjecture \ref{conj:poisson and prime}) that we can solve this problem combinatorially, using the concept of defining clusters  introduced in Section \ref{se:existence}.  
\end{remark}

\subsection{Step 2:$TP_Y$ characterizes TPPs}

 We begin with the following key result.
 Recall the notation $ker(B)=T$.
\begin{proposition} 
\label{pr:TPP super toric}
Let ${\bf x}$ be a cluster,  $rank(T+Im(\Lambda))=n$  and $\II$ be a non-zero TPP. Then the ideal $\II$ contains a cluster variable $x_i\in {\bf x}$. 
\end{proposition}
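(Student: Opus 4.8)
The plan is to exploit the log-canonical form of the Poisson bracket in the cluster $\mathbf{x}$ together with the torus invariance of $\II$, and to show that the intersection $\II\cap\CC[x_1,\ldots,x_n]$ is a \emph{monomial} ideal in the $x_i$, hence contains some $x_i$. First I would pass to the localization $\CC[x_1^{\pm1},\ldots,x_n^{\pm1}]$ and recall that the torus $T$ acts by the weights spanning $\ker(B)$; torus invariance of $\II$ means that $\II$ (or its extension to the Laurent ring) is graded by the quotient lattice $\ZZ^n/T$, so it is spanned by those $T$-homogeneous Laurent monomials, and more precisely each element of $\II$ decomposes into $T$-isotypic components each of which lies in $\II$. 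The key point is then that $\II$, being a \emph{Poisson} ideal, must be stable under the Hamiltonian derivations $\{\,\cdot\,,x_i\}$, and in the cluster $\mathbf{x}$ these act diagonally on Laurent monomials: $\{x^{\beta},x_i\}=\big(\sum_j\lambda_{ij}\beta_j\big)\,x^{\beta}x_i$. So both $T$ and the Hamiltonian flows preserve the $\ZZ^n$-grading up to the shearing by $\Lambda$-rows, and the hypothesis $\mathrm{rank}(T+\mathrm{Im}(\Lambda))=n$ is exactly what guarantees that the \emph{only} linear relations among the $\ZZ^n$-degrees that survive both symmetries are trivial — in other words, that $\II$, localized, is a monomial ideal in the $x_i^{\pm1}$.

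Concretely, the steps I would carry out are: (1) Take $0\ne f\in\II$ and write $f=\sum_{\beta}c_\beta x^{\beta}$ as a Laurent polynomial in $\mathbf{x}$ (Laurent phenomenon); since $T$ preserves $\II$, I may assume $f$ is $T$-homogeneous, so all $\beta$ appearing lie in a single coset of $T$ in $\ZZ^n$. (2) Apply the Hamiltonian derivation $\{\,\cdot\,,x_i\}$ repeatedly: since $\{x^{\beta},x_i\}=\langle\Lambda e_i,\beta\rangle x^{\beta}x_i$, the maps $f\mapsto\{f,x_i\}x_i^{-1}$ act on the span of $\{x^\beta\}$ diagonally with eigenvalue $\langle\Lambda e_i,\beta\rangle$ on $x^\beta$. (3) The degrees $\beta$ occurring in $f$ differ pairwise by elements of $T$; if two of them gave the same eigenvalue for every $i$, their difference would be killed by all of $\mathrm{Im}(\Lambda)$, but it already lies in $T$, contradicting $\mathrm{rank}(T+\mathrm{Im}(\Lambda))=n$ (such a nonzero difference cannot exist). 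Hence the eigenvalues $(\langle\Lambda e_i,\beta\rangle)_i$ separate the distinct monomials of $f$, and a Vandermonde-type argument on the iterated brackets $\{\cdots\{f,x_{i_1}\},\ldots\}$ extracts each individual monomial $x^{\beta}$ (up to a unit $x^{\gamma}$) into $\II$. (4) So $\II$, extended to the Laurent ring, contains a Laurent monomial, hence is the whole Laurent ring; intersecting back with $\CC[x_1,\ldots,x_n]$, the ideal $\II\cap\CC[x_1,\ldots,x_n]$ contains a monomial $x^{\beta}$ with $\beta\ge 0$, and since $\II$ is prime it contains some variable $x_i$, which is then a cluster variable in $\mathbf{x}$.

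The main obstacle I anticipate is step (3)–(4): making the separation-of-monomials argument rigorous. One has to be careful that the relevant "eigenvalues" really do distinguish all monomials appearing in a $T$-homogeneous $f$ — this is where $\mathrm{rank}(T+\mathrm{Im}(\Lambda))=n$ is used, and I would phrase it as: the pairing $\beta\mapsto(\langle\Lambda e_i,\beta\rangle)_{i=1}^n$ has kernel $\ker(\Lambda^T)$, which is the annihilator of $\mathrm{Im}(\Lambda)$, and by hypothesis this meets $T$ only in $0$ (after identifying $T$ with its image in the appropriate quotient); so two distinct monomials of a $T$-homogeneous element, whose degree difference lies in $T\setminus\{0\}$, cannot share all eigenvalues. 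Then the extraction is a finite linear-algebra argument (invertibility of a generalized Vandermonde matrix built from distinct eigenvalue vectors), which I would only sketch. A secondary subtlety is that $\II$ is a priori only a Poisson ideal of $\AA$, not of the Laurent ring, so I should check the brackets $\{f,x_i\}$ stay in $\AA$ (they do, after clearing the single power of $x_i$, since $\AA\subset\CC[x_1^{\pm1},\ldots,x_n^{\pm1}]$ and one can multiply by a large power of the cluster variables to land back in $\AA$ while staying inside $\II$), and that $\II\ne\AA$ forces the extracted monomial to be a proper one — but primeness then gives an actual variable $x_i\in\II$.
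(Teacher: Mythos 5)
Your proposal is essentially the paper's own argument: write an element of $\II\cap\CC[x_1,\ldots,x_n]$ in monomials, use torus-invariance to constrain the exponent differences, and use $\mathrm{rank}(T+\mathrm{Im}(\Lambda))=n$ to show that the log-canonical Hamiltonian derivations $\{\cdot,x_i\}$ separate distinct monomials, so that $\II$ contains a monomial and hence, being prime, some $x_i$; the paper extracts the monomial by a minimal-term-count induction on $cx_if-\{x_i,f\}$ rather than your Vandermonde argument, but the two devices are interchangeable. One recurring slip you should fix: the exponent differences of a $T$-homogeneous Laurent polynomial lie in the annihilator $T^\top\subset\ZZ^n$, not in $T$ itself, and the lemma you need (and which the rank hypothesis actually delivers, since $\mathrm{rad}(\Lambda)=\mathrm{Im}(\Lambda)^\top$) is $\mathrm{rad}(\Lambda)\cap T^\top=\{0\}$; the statement $\mathrm{rad}(\Lambda)\cap T=\{0\}$ that you invoke is false in general (one can have $T=\mathrm{rad}(\Lambda)\neq 0$ while the rank hypothesis still holds). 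With that correction your steps (1)--(4) coincide with the paper's proof.
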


\begin{remark}
The condition $rank(T+Im(\Lambda))=n$ is satisfied if $(B,\Lambda)$ is a compatible pair. Indeed, by definition $rank(B\cdot \Lambda)=m$, resp.~$rank(B(Im(\Lambda))=m$, and, hence, $rank(T+Im(\Lambda))=n$. 
\end{remark}
\begin{proof}
Notice first that $\II_{\bf x}\ne 0$. Indeed, let $0\ne f\in \II$. We can express $f$ as a Laurent polynomial in the variables $x_1,\ldots, x_n$; i.e., $f=x_{1}^{-c_1}\ldots x_n^{-c_n} g$ where $c_1,\ldots, c_n\in \ZZ_{\ge 0}$ and $0\ne g\in\CC[x_1,\ldots, x_n]$. Clearly, $g= x_{1}^{c_1}\ldots x_n^{c_n}f \in \II_{\bf x}$. Observe, additionally, that $\II_{\bf x}$ is prime and torus invariant.

We complete the proof  by contradiction. Let $f=\sum_{{\bf w}\in \ZZ^n} c_{\bf w} x^{\bf w}\in\II_{\bf x}$ and suppose that $f$ cannot be factored into $f=gh$ with $g\in\II_{\bf x}$ or $h\in\II_{\bf x}$. We have to show that $f=x_i$ for some $i$. Since the ideal is prime, it suffices to show that $f$ is a monomial. We assume that $f$ has the smallest number of nonzero summands such that no monomial term $c_{\bf w} x^{\bf w}$ with $c_{\bf w}\neq 0$ is contained in $\II$. It must therefore have at least two monomial terms.

We need the following fact.

\begin{lemma}
Using the notation introduced above,   a monomial $x^{\bf w}$ with ${\bf w}\in \ZZ^n$ is torus invariant if and only if ${\bf w}\in T^\top$, where $^\top$ denotes the orthogonal complement with respect to the standard bilinear form on $\ZZ^n$.
\end{lemma}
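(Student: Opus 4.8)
The plan is simply to unwind the definition of the torus action on monomials, using the identification of $T$ with $ker(B)$ supplied by Lemma \ref{le:compatible toric}. Recall from Section \ref{se:toric structure} that the local toric actions which extend to global ones are exactly those attached to a vector in $T=ker(B)$, and that, after choosing an integer $n\times k$ matrix $C$ whose columns span $T$ over $\QQ$, the torus of global toric actions acts on the cluster $\mathbf x$ by ${\bf d}\cdot_C x_i = x_i\prod_{j=1}^k d_j^{c_{ij}}$ as in Equation \ref{eq:toric action in class}.

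First I would compute the action on an arbitrary monomial: for ${\bf w}=(w_1,\ldots,w_n)\in\ZZ^n$,
\[
{\bf d}\cdot_C x^{\bf w}
= \prod_{i=1}^n\Bigl(x_i\prod_{j=1}^k d_j^{c_{ij}}\Bigr)^{w_i}
= x^{\bf w}\cdot\prod_{j=1}^k d_j^{\sum_{i=1}^n c_{ij}w_i}
= x^{\bf w}\cdot\prod_{j=1}^k d_j^{\langle {\bf c}_j,\,{\bf w}\rangle}\ ,
\]
where ${\bf c}_j$ denotes the $j$-th column of $C$ and $\langle\cdot,\cdot\rangle$ is the standard bilinear form on $\ZZ^n$. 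Hence $x^{\bf w}$ is fixed by every element of $T$ if and only if $\langle {\bf c}_j,{\bf w}\rangle=0$ for all $j=1,\ldots,k$. Second, I would pass from ``orthogonal to each column of $C$'' to ``orthogonal to all of $T$'': since the columns ${\bf c}_1,\ldots,{\bf c}_k$ span $T$ over $\QQ$ and $T$ is contained in its own $\QQ$-span, ${\bf w}$ is orthogonal to every ${\bf c}_j$ precisely when $\langle {\bf w},{\bf t}\rangle=0$ for all ${\bf t}\in T$, i.e. ${\bf w}\in T^\top$. Combining the two equivalences yields the lemma.

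There is essentially no obstacle here: the only point requiring a moment's care is the correct identification of the torus $T$ of global toric actions with $ker(B)$ and of its action with the one in Equation \ref{eq:toric action in class} (this is Lemma \ref{le:compatible toric}), after which the statement reduces to the one-line computation above together with the elementary fact that a vector is orthogonal to a lattice iff it is orthogonal to a spanning set of that lattice. Note also that the conclusion is automatically integral: the conditions $\langle {\bf c}_j,{\bf w}\rangle=0$ have integer coefficients, so $T^\top$ is indeed a sublattice of $\ZZ^n$, as asserted.
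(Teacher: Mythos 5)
Your proposal is correct and follows essentially the same route as the paper: the paper likewise unwinds the local toric action on a monomial (for a one-parameter subgroup attached to ${\bf b}\in T$, $x^{\bf w}\mapsto\alpha^{\sum_i w_ib_i}x^{\bf w}$) and concludes that invariance is exactly orthogonality to $T$. Your version merely phrases the same computation via the matrix $C$ parametrizing the full torus rather than one-parameter subgroups, which changes nothing of substance.
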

\begin{proof}
Recall from Section \ref{se:toric structure} that if ${\bf b}\in T$ defines a global toric action $ \psi_{{\bf x},\alpha}$, then $x^{\bf w}$ is  invariant under $ \psi_{{\bf x},\alpha}$ if and only if $\sum_{i=1}^n w_ib_i=0$. The assertion follows.
\end{proof}
  
 The function $f$, considered above, must be torus invariant, 
 hence for each pair ${\bf w},{\bf w}'\in \ZZ^n$ with $c_{\bf w},c_{{\bf w}'}\ne 0$ we obtain that ${\bf w}-{\bf w}'={\bf v}\in T^\top$.  Denote by $rad(\Lambda)$ the radical of the skew-symmetric bilinear form, i.e.~the set of ${\bf u}\in \ZZ^n$ such that ${\bf u}^T\cdot \Lambda\cdot{\bf u}'=0$ for all ${\bf u}'\in \ZZ^n$. We have the following fact.

\begin{lemma}
The intersection $rad(\Lambda)\cap T^\top=\{0\}$. 
\end{lemma}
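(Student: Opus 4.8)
The claim is that $\mathrm{rad}(\Lambda)\cap T^\top=\{0\}$, where $T=\ker(B)$ and $T^\top$ is its orthogonal complement in $\ZZ^n$. The plan is to translate both subspaces into conditions involving $\Lambda$ and $B$ and then invoke the rank hypothesis $\mathrm{rank}(T+\mathrm{Im}(\Lambda))=n$ standing in Proposition \ref{pr:TPP super toric}. First I would pass to $\QQ$ (or $\R$): since all the lattices involved are saturated-up-to-torsion for the purposes of this intersection, it suffices to show the corresponding vector subspaces of $\QQ^n$ meet only in $0$, and a nonzero integer vector in the intersection would give a nonzero rational one.

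The key identifications are: $\mathrm{rad}(\Lambda)=\ker(\Lambda)$ (viewing $\Lambda$ as the matrix of the bilinear form, $\mathbf{u}\in\mathrm{rad}(\Lambda)$ iff $\Lambda\mathbf{u}=0$, using skew-symmetry so that left and right radicals coincide); and $T^\top=(\ker B)^\top=\mathrm{Im}(B^T)$, the row space of $B$. So the statement to prove becomes $\ker(\Lambda)\cap\mathrm{Im}(B^T)=\{0\}$. Now I would use the rank hypothesis in the form $T+\mathrm{Im}(\Lambda)=\QQ^n$. Taking orthogonal complements, this is equivalent to $T^\top\cap\mathrm{Im}(\Lambda)^\top=\{0\}$, i.e. $\mathrm{Im}(B^T)\cap\ker(\Lambda^T)=\{0\}$, and since $\Lambda$ is skew-symmetric, $\ker(\Lambda^T)=\ker(\Lambda)=\mathrm{rad}(\Lambda)$. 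That is exactly the desired statement, so the lemma is essentially a restatement of the standing hypothesis via the duality $(A+B)^\top=A^\top\cap B^\top$ together with $\ker(\Lambda)^\top=\mathrm{Im}(\Lambda^T)=\mathrm{Im}(\Lambda)$.

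The only real content is bookkeeping: making sure the orthogonal-complement duality is applied correctly and that $\mathrm{rad}(\Lambda)$ (defined in the text as $\{\mathbf{u}:\mathbf{u}^T\Lambda\mathbf{u}'=0\ \forall\mathbf{u}'\}$) really equals $\ker(\Lambda)$, which follows because $\mathbf{u}^T\Lambda\mathbf{u}'=(\Lambda^T\mathbf{u})^T\mathbf{u}'$ vanishes for all $\mathbf{u}'$ iff $\Lambda^T\mathbf{u}=0$ iff $\Lambda\mathbf{u}=0$ by skew-symmetry. I expect no genuine obstacle here; the subtlety to watch is only the distinction between integer lattices and their rational spans, which is harmless for a statement about an intersection being trivial. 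If one prefers to avoid complements entirely, an alternative is direct: given $\mathbf{v}\in\mathrm{rad}(\Lambda)\cap T^\top$, write $\mathbf{v}=B^T\mathbf{c}$ for some $\mathbf{c}$; then for every $\mathbf{u}\in T=\ker B$ one has $\mathbf{v}^T\mathbf{u}=\mathbf{c}^T B\mathbf{u}=0$, so $\mathbf{v}\in T^\top$ automatically, while $\Lambda\mathbf{v}=0$ forces $\mathbf{v}\perp\mathrm{Im}(\Lambda^T)=\mathrm{Im}(\Lambda)$; hence $\mathbf{v}\in T^\top\cap\mathrm{Im}(\Lambda)^\top=(T+\mathrm{Im}(\Lambda))^\top=\{0\}$ by the hypothesis.
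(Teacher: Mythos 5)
Your proposal is correct and follows essentially the same route as the paper: both pass to $\QQ$, identify $rad(\Lambda)$ with $Im(\Lambda)^\top$ via skew-symmetry, and then observe that the standing hypothesis $rank(T+Im(\Lambda))=n$ dualizes to $Im(\Lambda)^\top\cap T^\top=\{0\}$. The extra identification $T^\top=Im(B^T)$ in your write-up is a harmless detour that the paper does not need and your final direct argument also does not use.
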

\begin{proof}
  
We know that $Im(\Lambda)\otimes\QQ+ker(B)\otimes \QQ= \QQ^n$. Hence $Im(\Lambda)^\top\otimes\QQ\cap T\otimes \QQ^\top=\{0\}$. Notice, that by definition $Im(\Lambda)^\top\otimes\QQ=rad(\Lambda)\otimes \QQ$. The assertion is proved.
\end{proof}

   Assume as above that $c_{\bf w},c_{{\bf w}'}\ne 0$ and ${\bf w}-{\bf w}'={\bf v}\in T^\top$.  The previous lemma yields that ${\bf v}\notin rad(\Lambda)$. This implies that there exists $i\in [1,n]$ such that $\{x_i,x^{\bf v}\}\ne 0$. Therefore,  $\{x_i, x^{\bf w}\}=cx_ix^{\bf w}\ne dx_ix^{{\bf w}'}\{x_i, x^{{\bf w}'}\}$ for some $c,d\in \CC$. 

Clearly, $cx_if-\{x_i,f\}\in \II$ and 
$$ cx_if-\{x_i,f\}=(c-c)c_{\bf w} x^{\bf w}+(c-d)c_{{\bf w}'} x^{{\bf w}'}+\ldots\ . $$

Hence, $cx_if-\{x_i,f\}\ne 0$ and it has fewer monomial summands than $f$ which contradicts our assumption. Therefore,   $\II$ contains a  monomial, and because it is prime it must contain some $x_i\in {\bf x}$. 
  The proposition is proved.
\end{proof}

Let ${\bf x}$ be a cluster in $\AA$ and ${\bf i}=\{x_{i_1},\ldots, x_{i_k}\}$ be a $k$-element subset of ${\bf x}$. Moreover, denote by ${\bf j}=\{x_{j_1},\ldots, x_{j_\ell}\}={\bf x}-{\bf i}$ with $\ell=n-k$   and by $\Lambda_{\bf i}$ the submatrix of $\Lambda$  obtained by removing the rows and columns labeled by $\{i_1,\ldots, i_k\}$. Recall the notation $T_{\bf i}$ as introduced in Section \ref{se:torus on subs}. Observe that $Im(\Lambda_{\bf i})$ and $T_{\bf i}$ can be naturally viewed as sublattices of the lattice $\ZZ^{\bf j}\subset \ZZ^n$ generated by the standard basis vectors $e_{j_1},\ldots, e_{j_\ell}$. We need our first major  condition.
\begin{condition}
Let $B$ and $\Lambda$ be as above. The cluster ${\bf x}$ is super-toric, if $rank(T_{\bf i}+ Im(\Lambda_{\bf i}))=n-k=\ell$ for all subsets ${\bf i}\subset [1,n]$.
\end{condition}
  
\begin{remark}
Notice that if $B$ and $\Lambda$ are generic, and $m<n$, then the corresponding cluster will be super-toric. Indeed, for generic matrices we have $$rank(Im(\Lambda_{\bf i}))=min(rank(\Lambda_{\bf i}), 2 \lfloor\frac{n-k}{2}\rfloor)$$ while $rank(T_{\bf i})=min(rank(T), n-k)$.  
\end{remark}  
  
\begin{theorem}
\label{th:intersection}
Let $\AA$ be a Noetherian Poisson cluster algebra and ${\bf x}$ a super-toric cluster, and let $\II$ be a TPP. Then the set $\II\cap Y=S$ is an element of   $TP_Y$.
\end{theorem}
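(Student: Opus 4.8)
The plan is to verify the two defining conditions of $TP_Y$ for the set $S = \II \cap Y$, using Proposition \ref{pr:TPP super toric} applied not to $\AA$ itself but to a sequence of quotient cluster algebras obtained by killing cluster variables in $S$. First I would observe that condition (1), namely $I_S \cap Y = S$, is almost immediate: by definition $S = \II \cap Y$, and since $S \subset \II$ we have $I_S \subset \II$ (the Poisson ideal generated by $S$ lies inside any Poisson ideal containing $S$), hence $I_S \cap Y \subset \II \cap Y = S$; the reverse inclusion $S \subset I_S \cap Y$ is trivial. So condition (1) requires essentially nothing beyond unwinding definitions.

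The real content is condition (2): $\II \cap \CC[x_1,\ldots,x_n] = J_S$. The inclusion $J_S \subset \II \cap \CC[x_1,\ldots,x_n]$ is clear since $S \subset \II$ and $\II$ is an ideal of $\AA \supset \CC[x_1,\ldots,x_n]$. For the reverse inclusion I would argue as follows. Write $S \cap {\bf x} = {\bf i} = \{x_{i_1},\ldots,x_{i_k}\}$ and let ${\bf j} = {\bf x} - {\bf i} = \{x_{j_1},\ldots,x_{j_\ell}\}$. Passing to the quotient $\overline{\AA} = \AA / I_{\bf i}$, where $I_{\bf i}$ is the Poisson ideal generated by ${\bf i}$: this quotient is again a Poisson algebra, and by the super-toric hypothesis together with the discussion in Section \ref{se:torus on subs} it carries an induced torus action of $T_{\bf i}$ with $\mathrm{rank}(T_{\bf i} + \mathrm{Im}(\Lambda_{\bf i})) = \ell$ — precisely the rank hypothesis needed to invoke Proposition \ref{pr:TPP super toric} for the cluster $\overline{\bf x} = \{\overline{x_{j_1}},\ldots,\overline{x_{j_\ell}}\}$ in $\overline{\AA}$. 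The image $\overline{\II}$ of $\II$ in $\overline{\AA}$ is a Poisson prime, torus-invariant ideal. If $\overline{\II} \neq 0$, then Proposition \ref{pr:TPP super toric} forces $\overline{\II}$ to contain some $\overline{x_{j_h}}$, contradicting the maximality of ${\bf i}$: the preimage of $x_{j_h}$ would then lie in $\II$ (since $\II \supset I_{\bf i}$ and $x_{j_h} + I_{\bf i} \in \overline{\II} = \II/I_{\bf i}$), so $x_{j_h} \in \II \cap {\bf x} = {\bf i}$, impossible. Hence $\overline{\II} = 0$, i.e. $\II = I_{\bf i}$ as an ideal-theoretic consequence — more precisely, $\II \cap \CC[x_1,\ldots,x_n]$ maps to $0$ in $\CC[x_1,\ldots,x_n]/(x_{i_1},\ldots,x_{i_k})$, which is exactly the statement $\II \cap \CC[x_1,\ldots,x_n] \subset (x_{i_1},\ldots,x_{i_k}) = J_S$.

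The step I expect to be the main obstacle is making the quotient construction legitimate and checking that the induced data $(\overline{\bf x}, \overline{B}, \Lambda_{\bf i})$ on $\overline{\AA}$ genuinely satisfies the hypotheses of Proposition \ref{pr:TPP super toric}: one must confirm that $I_{\bf i} \cap \CC[x_1,\ldots,x_n]$ is the monomial prime $(x_{i_1},\ldots,x_{i_k})$ (so that $\overline{\AA}$ contains an honest polynomial subring in the remaining variables and the Laurent-polynomial expansion argument of Proposition \ref{pr:TPP super toric} still runs), and that the torus $T_{\bf i}$ and the matrix $\Lambda_{\bf i}$ interact with the quotient exactly as the super-toric condition predicts. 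This is where the hypothesis that ${\bf x}$ is super-toric does its real work. A secondary subtlety is that $\II$ could be the zero ideal, in which case $S = \{1\}$ is not in $\II$ — but then $\II \cap Y = \emptyset$ while $J_\emptyset = 0 = \II \cap \CC[x_1,\ldots,x_n]$, so $\emptyset \in TP_Y$ and the statement holds trivially; I would dispatch this degenerate case at the outset. Finally I would remark that the same argument applies verbatim when $\AA$ is an upper cluster algebra or an upper bound, since Proposition \ref{pr:TPP super toric} and the ambient polynomial-subring structure are available there as well.
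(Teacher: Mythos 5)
Your overall strategy --- condition (1) is formal, and condition (2) reduces to running the weight-decomposition-plus-bracket argument of Proposition \ref{pr:TPP super toric} on the complementary variables $x_{j_1},\ldots,x_{j_\ell}$ with the data $T_{\bf i}$, $\Lambda_{\bf i}$ supplied by the super-toric hypothesis --- is exactly the mechanism the paper uses (packaged there as Proposition \ref{pr:TPP super toric +}). However, the way you implement it, via the quotient $\overline{\AA}=\AA/I_{\bf i}$, introduces a circularity at the last step which you flag but do not resolve. From $\overline{\II}=0$ you conclude $\II=I_{\bf i}$, and to finish you need $\II\cap\CC[x_1,\ldots,x_n]\subset(x_{i_1},\ldots,x_{i_k})$; as you yourself note, this requires $I_{\bf i}\cap\CC[x_1,\ldots,x_n]=(x_{i_1},\ldots,x_{i_k})$, equivalently that $\CC[x_1,\ldots,x_n]/(x_{i_1},\ldots,x_{i_k})$ injects into $\overline{\AA}$. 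But that \emph{is} condition (2) of Definition \ref{def:TPP}, i.e.\ the statement being proved, so it cannot be deferred to a ``confirmation'' step: an element of $I_{\bf i}$ is a combination $\sum_r a_r x_{i_r}$ with the $a_r\in\AA$ arbitrary Laurent polynomials in ${\bf x}$, and there is no a priori reason why such a combination, when it happens to lie in $\CC[x_1,\ldots,x_n]$, should lie in the monomial ideal. A second, related, problem is that $\overline{\AA}$ is not a cluster algebra, so Proposition \ref{pr:TPP super toric} cannot simply be cited for it; its proof would have to be re-run in $\overline{\AA}$, and that re-run again needs the injectivity above in order to make sense of ``a nonzero element of $\overline{\II}$ written as a (Laurent) polynomial in the $\overline{x_{j_h}}$''.

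The paper avoids both difficulties by never forming the quotient. Given $0\ne f\in\II\cap\CC[x_1,\ldots,x_n]$, write $f=g+h$ with $g$ in the monomial ideal $J_S=(x_{i_1},\ldots,x_{i_k})$ (hence $g\in\II$, since each $x_{i_r}\in\II$) and $h\in\CC[x_{j_1},\ldots,x_{j_\ell}]$; then $h=f-g\in\II$, and the weight/bracket argument --- carried out honestly inside $\CC[x_{j_1},\ldots,x_{j_\ell}]\subset\AA$ using $T_{\bf i}$ and $\Lambda_{\bf i}$, where the super-toric condition supplies $rad(\Lambda_{\bf i})\cap T_{\bf i}^\top=\{0\}$ --- shows that a nonzero such $h$ would force some $x_{j_h}\in\II$, contradicting $\II\cap{\bf x}={\bf i}$. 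Hence $h=0$, $f\in J_S$, and therefore $I_S\cap\CC[x_1,\ldots,x_n]\subset\II\cap\CC[x_1,\ldots,x_n]=J_S$, which is condition (2). If you restructure your middle step this way, the rest of what you wrote (the formal verification of condition (1), the dispatch of $\II=0$, and the remark about upper cluster algebras and upper bounds) stands.
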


 \begin{proof}

The first condition of Definition \ref{def:TPP} is obvious. The second, however, is a bit more interesting. It follows from a  stronger version of Proposition \ref{pr:TPP super toric}.
\begin{proposition} 
\label{pr:TPP super toric +}
Let ${\bf x}$ be a super-toric cluster, and $\II$ be a non-zero TPP. Then the ideal $\II_{\bf x} =\II\cap\CC[x_1,\ldots, x_n]$ is generated by a non-empty subset of ${\bf x}$. 
\end{proposition}

\begin{proof}
 Suppose that  $\II\cap{\bf x}= {\bf i}=\{x_{i_1},\ldots, x_{i_k}\}$. Suppose that $f\in \CC[x_{j_1},\ldots, x_{j_\ell}]\in \II$. The cluster, however,  is super-toric, hence we can adapt the argument from the proof of Proposition \ref{pr:TPP super toric} to $\CC[x_{j_1},\ldots, x_{j_\ell}]$, $T_{\bf i}$ and $\Lambda_{\bf i}$ and show that there exists some $x_j\notin {\bf i}$ such that $x_j \in \II$. We obtain the desired contradiction and the proposition is proved. 
\end{proof}

Theorem \ref{th:intersection} is proved. 
\end{proof}

  Proposition  \ref{pr:TPP super toric +} has the following immediate consequence. Denote by $X_\II$   the zero locus of an ideal $\II$ in $X$.
 
 \begin{proposition}
 \label{pr:dimension form}
 Let $\II\subset \AA$ be a TPP, ${\bf x}$   a super-toric cluster and let $|\II\cap{\bf x}\}|=k$. 
  Then $dim(X_\II)\ge n-k=\ell$. 
 \end{proposition}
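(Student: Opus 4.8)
The plan is to read off the dimension bound directly from Proposition \ref{pr:TPP super toric +}, which tells us that $\II_{\bf x} = \II \cap \CC[x_1,\ldots,x_n]$ is generated by the subset $\{x_{i_1},\ldots,x_{i_k}\} = \II\cap{\bf x}$ of the coordinate functions. First I would recall that since ${\bf x}=(x_1,\ldots,x_n)$ is a cluster, the localized ring $\CC[x_1^{\pm1},\ldots,x_n^{\pm1}]$ is a localization of $\AA$, and conversely $\AA\subset\CC[x_1^{\pm1},\ldots,x_n^{\pm1}]$ by the Laurent phenomenon; equivalently, there is a dominant open immersion of the torus $(\CC^\ast)^n$ into $X$ whose image is the locus where all $x_i$ are invertible. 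Thus $X$ contains a dense open subset $U\cong(\CC^\ast)^n$, and restricting to this chart is faithful for computing dimensions of subvarieties that meet $U$.

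The key step is to identify a piece of $X_\II$ of dimension at least $\ell=n-k$. Since $\II_{\bf x}$ is the prime ideal of $\CC[x_1,\ldots,x_n]$ generated by $x_{i_1},\ldots,x_{i_k}$, its zero locus in $\AA^n$ is the coordinate subspace $L=\{x_{i_1}=\cdots=x_{i_k}=0\}\cong\AA^\ell$, which is irreducible of dimension $\ell$. Now $X_\II\cap U$ is the zero locus of $\II$ inside the chart $\mathrm{Spec}\,\CC[x_1,\ldots,x_n]\subset X$ — wait, more carefully: the relevant affine chart of $X$ is not all of $\AA^n$ but $\mathrm{Spec}$ of the lower bound or of $\CC[x_1,\ldots,x_n]$-localized appropriately; the cleanest route is to observe that $\CC[x_1,\ldots,x_n]\hookrightarrow\AA$ makes $X=\mathrm{Spec}\,\AA$ map to $\AA^n=\mathrm{Spec}\,\CC[x_1,\ldots,x_n]$, this map is dominant (being generically an isomorphism onto the torus), and $\II\cap\CC[x_1,\ldots,x_n]=\II_{\bf x}$ is exactly the contraction of $\II$. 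Therefore the induced map $X_\II\to V(\II_{\bf x})=L$ is dominant: its image is dense in the irreducible $L$ because already the open torus part surjects onto $L\cap(\CC^\ast)^\ell$, which is dense in $L$. A dominant morphism cannot decrease dimension of the target, hence $\dim X_\II\ge\dim L=\ell=n-k$.

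The main obstacle, and the only point requiring care, is justifying the dominance claim — i.e.\ that $X_\II$ actually surjects onto a dense subset of the coordinate subspace $L$ rather than onto a proper subvariety of it. This is where super-toricity and the structure of $\II_{\bf x}$ as generated by coordinate variables are used: because $\II_{\bf x}$ contains \emph{no} monomial in the remaining variables $x_{j_1},\ldots,x_{j_\ell}$ (this is part (1) of Lemma \ref{le:TP implies}, applied via $\II\cap M_S=\emptyset$), the ideal $\II$ localized at the multiplicative set $M_{\II\cap{\bf x}}$ remains proper, so $X_\II$ meets the locus where all of $x_{j_1},\ldots,x_{j_\ell}$ are nonzero. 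On that locus $X_\II$ is cut out inside $(\CC^\ast)^\ell$ (times the vanishing coordinates) by $\II_{\bf x}=(x_{i_1},\ldots,x_{i_k})$ alone — since any element of $\II$ localizes to an element of $\II_{\bf x}$ after clearing the invertible $x$'s — giving exactly the dense torus $(\CC^\ast)^\ell\subset L$. Hence $\dim X_\II\ge\ell$, and I would close by remarking that equality need not hold in general, only the inequality is asserted.
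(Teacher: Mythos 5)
Your argument is correct and is essentially the paper's own proof in geometric dress: the paper deduces from Proposition \ref{pr:TPP super toric +} that the images of $x_{j_1},\ldots,x_{j_\ell}$ are algebraically independent in $\AA/\II$, so that $\CC(x_{j_1},\ldots,x_{j_\ell})$ embeds into the fraction field of $\AA/\II$, which is exactly your dominance of $X_\II$ onto the coordinate subspace $L\cong\AA^\ell$ restated as an inclusion of function fields. The extra discussion of torus charts and localization is not needed once you know the contraction of $\II$ to $\CC[x_1,\ldots,x_n]$ is $(x_{i_1},\ldots,x_{i_k})$.
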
 
 \begin{proof}
 Recall that we denote by $\{x_{j_1},\ldots, x_{j_\ell}\}$ the set ${\bf x}-S$.
 Observe  that $\{x_{j_1},\ldots, x_{j_\ell}\}$ is algebraically independent over $\AA/\II$ by Proposition \ref{pr:TPP super toric}.  Moreover, no Laurent polynomial  $f=x^{-{\bf w}}g$ with ${\bf w}\in \ZZ^{n}_{\ge0}$ and $g\in \CC[x_{j_1},\ldots, x_{j_\ell}]$ is contained in the ideal. Hence, the field of fractions $\CC(x_{j_1},\ldots, x_{j_\ell})$ embeds into the field of fractions of  $\AA/\II$ and the assertion follows. 
\end{proof}

\subsection{Step 3: Existence of TPPs for $S\in TP_Y$}
 \label{se:existence}
In order to study a  TPP $\II$ we need to capture as much information as possible in the set ${\bf x}\cap \II$. For this reason we prefer to work with clusters such that the cardinality  of ${\bf x}\cap \II$ equals the dimension of $X_\II$.
 We therefore introduce the notion of   {\it defining clusters}. Their properties will be further investigated in the following Section \ref{se:finiteness and defining clusters}.
 \begin{definition}
 \label{def:defining clusters}
(a) Let $\AA$ be a Poisson cluster algebra and  $\II\subset \AA$ a TPP. A cluster ${\bf x}$ is called a defining cluster for   $\II$  if $x_i\in \II$ implies that $y_i\in \II$, as well.

\noindent(b) A set $S\in TP_Y$ is called defining, if $x_i\in S$ implies that $y_i\in S$, as well.
\end{definition}
 The main result of this section is the following theorem.
\begin{theorem}
\label{th:TPP existence}
Let $\AA$ be as above,  and let $S\in TP_Y$ be defining. Then there exists a TPP $\II\subset \AA$ such that $\II\cap Y=S$.
\end{theorem}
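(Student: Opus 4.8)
The plan is to construct the desired TPP $\II$ as the contraction to $\AA$ of a prime Poisson ideal built inside a localization of $\AA$ where the computation becomes transparent. Write $S \cap {\bf x} = {\bf i} = \{x_{i_1},\ldots,x_{i_k}\}$ and ${\bf x} - {\bf i} = \{x_{j_1},\ldots,x_{j_\ell}\}$. Since $S$ is defining, $x_{i_r} \in S$ forces $y_{i_r} \in S$, so by Lemma \ref{le:TP implies}(2) the exchange polynomials $P_{i_r}$ lie in $J_S$ for exactly those directions, while for directions $i$ with $x_i \notin S$ we have $P_i \notin J_S$. First I would pass to the localization $\AA[M_S^{-1}]$ obtained by inverting all monomials in $M_S$ (equivalently, inverting each $x_{j_h}$). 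The point of localizing at $M_S$, rather than at all of ${\bf x}$, is that Lemma \ref{le:TP implies}(1) guarantees $I_S \cap M_S = \emptyset$, so the localization map does not collapse $I_S$, and inside $\AA[M_S^{-1}]$ the ideal generated by $J_S$ is still a proper Poisson prime ideal: in the Laurent polynomial ring $\CC[x_{i_1}^{\pm},\ldots,x_{i_k}^{\pm}, x_{j_1}^{\pm},\ldots,x_{j_\ell}^{\pm}]$ — which contains $\AA[M_S^{-1}]$ — the ideal generated by $\{x_{i_1},\ldots,x_{i_k}\}$ is prime, torus-invariant, and Poisson (the bracket being log-canonical, $\{x_i, x^{\bf w}\}$ is a scalar multiple of $x_i x^{\bf w}$, so the ideal $(x_{i_1},\ldots,x_{i_k})$ is visibly Poisson-closed).

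The key step is then to show that the contraction $\II := \widetilde{J_S} \cap \AA$, where $\widetilde{J_S}$ is the Poisson-prime ideal generated by ${\bf i}$ in $\AA[M_S^{-1}]$, actually satisfies $\II \cap Y = S$. That $\II$ is prime, Poisson, and torus-invariant is automatic from the corresponding properties of $\widetilde{J_S}$ together with the fact that contraction along a localization preserves all three. That $\II \cap \CC[x_1,\ldots,x_n] = J_S$ follows because $\widetilde{J_S} \cap \CC[x_1,\ldots,x_n] = J_S$ in the Laurent ring (a monomial-generated prime contracts correctly), and this is condition (2) in Definition \ref{def:TPP} — so at least $\II$ is consistent with $S$ on the polynomial ring. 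The real content is verifying $\II \cap Y = S$, i.e. that for each $i$ with $x_i \notin S$ we have $x_i \notin \II$ and $y_i \notin \II$, and for each $i$ with $y_i \in S$ (forced $x_i \in S$) that indeed $y_i \in \II$. The inclusion $S \subseteq \II \cap Y$ is easy: $x_{i_r} \in {\bf i} \subset \widetilde{J_S}$, and $y_{i_r} = x_{i_r}^{-1} P_{i_r}$ lies in $\widetilde{J_S}$ because $P_{i_r} \in J_S \subset \widetilde{J_S}$ and $\widetilde{J_S}$ is an ideal in the localization where $x_{i_r}$ need not be inverted — one must check $P_{i_r} \in \widetilde{J_S}$ implies $x_{i_r}^{-1}P_{i_r} \in \widetilde{J_S}$, which holds since in fact $P_{i_r} \in (x_{i_1},\ldots,x_{i_k})$ forces $P_{i_r}/x_{i_r}$ into the same ideal after writing $P_{i_r}$ in the monomials of the cluster. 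For the reverse inclusion $\II \cap Y \subseteq S$: if $x_i \notin S$ then $x_i$ is a unit in $\AA[M_S^{-1}]$, hence $x_i \notin \widetilde{J_S}$, hence $x_i \notin \II$; and if $y_i \notin S$ with $x_i \notin S$, then $x_i$ is a unit so $y_i \in \II$ would force $P_i = x_i y_i \in \widetilde{J_S} \cap \CC[x_1,\ldots,x_n] = J_S$, contradicting Lemma \ref{le:TP implies}(2); the remaining case is $y_i \notin S$ but $x_i \in S$, where one uses that $S \in TP_Y$ (condition (1): $I_S \cap Y = S$) together with the fact that $\II \supseteq I_S$ forces $\II \cap Y \supseteq S$ but one needs $y_i \notin \II$ — here I would argue that $P_i \notin J_S$ in this case (since $x_i \in S, y_i \notin S$ would violate Lemma \ref{le:TP implies}(2) if $P_i \in J_S$), so $P_i$ is a unit times something outside $\widetilde{J_S}$, whence $y_i = x_i^{-1} P_i \notin \widetilde{J_S}$.

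I expect the main obstacle to be the bookkeeping in the last case-analysis — precisely, ruling out that passing to $\AA[M_S^{-1}]$ and contracting back introduces extra elements of $Y$ into $\II$ that were not in $S$, i.e. genuinely nailing down $\II \cap Y \subseteq S$ rather than merely $\supseteq$. This is where conditions (1) and (2) defining $TP_Y$, and the equivalence "$x_i \in S$ or $y_i \in S \iff P_i \in J_S$" from Lemma \ref{le:TP implies}(2), must be used in full; the "defining" hypothesis is what makes the two conditions $x_i \in S$ and $y_i \in S$ move together so that no $y_i$ with $x_i \notin S$ can sneak in. A secondary technical point is checking that $\widetilde{J_S}$ is genuinely Poisson-closed inside $\AA[M_S^{-1}]$ — i.e. that the Poisson bracket on $\AA$ extends to the localization (which it does, by the formula $\{f^{-1},g\} = -f^{-2}\{f,g\}$ recalled in Section \ref{se:poissonstructure}) and that $(x_{i_1},\ldots,x_{i_k})$ is stable under it, which reduces to the log-canonical identity in the cluster ${\bf x}$ and is a short computation.
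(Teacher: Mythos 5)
Your overall setup is the same as the paper's: invert the monomials in ${\bf x}-S$ (a Poisson multiplicative set), push the ideal generated by $S$ into the localization, and contract back to $\AA$. But there is a genuine gap at the crucial step, namely your claim that the ideal $\widetilde{J_S}$ generated by ${\bf i}$ in $\AA[M_S^{-1}]$ is already prime. Your justification embeds $\AA[M_S^{-1}]$ into the Laurent ring $\CC[x_{i_1}^{\pm},\ldots,x_{i_k}^{\pm},x_{j_1}^{\pm},\ldots,x_{j_\ell}^{\pm}]$ and asserts that $(x_{i_1},\ldots,x_{i_k})$ is prime there; but in that ring the $x_{i_r}$ are units, so that ideal is the unit ideal. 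If instead you meant the mixed ring $\CC[x_{i_1},\ldots,x_{i_k},x_{j_1}^{\pm},\ldots,x_{j_\ell}^{\pm}]$, the ideal is indeed prime, but $\AA[M_S^{-1}]$ is \emph{not} contained in that ring (already $y_{i_r}=x_{i_r}^{-1}P_{i_r}$ carries a negative power of $x_{i_r}$), so primality does not transfer by contraction; and even if some containment held, the ideal generated by ${\bf i}$ inside $\AA[M_S^{-1}]$ need not coincide with the contraction of the bigger ideal. The membership of $S$ in $TP_Y$ only controls $I_S\cap Y$ and $I_S\cap\CC[x_1,\ldots,x_n]$; it does not make $I_S$ (or its extension $\hat I_S$) prime. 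The paper sidesteps exactly this: it observes that $\hat I_S$ is proper (since $M_S\cap I_S=\emptyset$), takes a \emph{minimal prime} $P$ over it, and invokes Lemma \ref{le:Poisson core primes} (minimal primes over Poisson ideals are Poisson) and Lemma \ref{le:primes are H primes} (minimal primes over torus-stable ideals are torus-stable) to conclude that $P$, and hence $P\cap\AA$, is a TPP. Without this minimal-prime step your $\II$ has no verified primality, and the rest of the case analysis has nothing to stand on.

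Two smaller points. First, your final case ``$x_i\in S$ but $y_i\notin S$'' is vacuous precisely because $S$ is assumed defining, so you need not argue it; moreover the argument you give there misreads Lemma \ref{le:TP implies}(2), which says $x_i\in S$ \emph{or} $y_i\in S$ is equivalent to $P_i\in J_S$, so $x_i\in S$ alone already forces $P_i\in J_S$. Second, once one works with a minimal prime $P$ rather than $\widetilde{J_S}$, the exclusion of extra $y_i$'s from $\II=P\cap\AA$ is obtained as in the paper: the $x_{j_h}$ are units so $\II\cap{\bf x}=S\cap{\bf x}$, and if $y_i\in\II$ then $P_i=x_iy_i\in\II\cap\CC[x_1,\ldots,x_n]$, which by Proposition \ref{pr:TPP super toric +} and Lemma \ref{le:TP implies}(2) forces $y_i\in S$ because $S$ is defining. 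That part of your argument is recoverable, but only after the primality issue is repaired.
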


\begin{proof}
 
We first introduce the notion of a Poisson multiplicative set. Let $\AA$ be a Poisson algebra and $S\subset \AA$ a multiplicative set (i.e., $s\cdot t\in S$ for all $s,t\in S$). We call $S$ a {\it Poisson multiplicative set} if $\{s,t\}\in  S\cup\{0\}$ for all $s,t\in S$. We have the following fact.

\begin{lemma}
Let $\AA$ be a Poisson algebra and let $S\subset \AA$ be a Poisson multiplicative set.
\noindent(a) The algebra $\AA[S^{-1}]$ is a Poisson algebra with Poisson bracket 
$$\{s^{-1} f,t^{-1} g\}= s^{-2}t^{-2}\{s,t\} fg - s^{-2}t^{-1}\{s,g\}f-  s^{-1}t^{-2} \{f,t\}g+s^{-1}t^{-1}\{f,g\}\ ,$$
for all $s,t\in S$ and $f,g\in \AA$.

\noindent(b) The natural embedding of algebras $\AA\hookrightarrow \AA[S^{-1}]$ is a homomorphism of Poisson algebras.

\noindent(c) Extension and Contraction define maps from the set of Poisson ideals $\II$ in $\AA$ such that $\II\cap S=\emptyset$  to the set of Poisson ideals in  $\AA[S^{-1}]$. Moreover, if restricted to prime ideals, this map becomes a bijection.
\end{lemma}

\begin{proof}
Part (a) can be proved by direct  computation using the identity
$\{s^{-1},f\}=-s^{-2}\{s,f\}$ for all $s\in S$ and $f\in \AA$.  It follows from
$$0=\{1,f\}=\{s^{-1}s,f\}=s^{-1}\{s,f\}+s\{s^{-1},f\}\ .$$

For part (b) it suffices to observe that $\AA\subset \AA[S^{-1}]$ is a Poisson subalgebra. Part(c) is also immediate from standard localization theory and the fact that if $B\subset A$ is a Poisson subalgebra of a Poisson algebra $A$ and $J\subset A$ a Poisson ideal, then the intersection $J\cap B$ is a Poisson ideal in $B$.  The lemma is proved.
\end{proof}

Recall that we denote by $\{x_{j_1},\ldots, x_{j_\ell}\}$ the set ${\bf x}-S$.
Observe that the set  $\{ \gamma x_{j_1}^{\gamma_1}\ldots, x_{j_\ell}^{\gamma_\ell}:\gamma\in \CC;\alpha_1,\ldots,\alpha_\ell\in \ZZ_{\ge 0}\}$ is a Poisson multiplicative set. Consider the ideal $\hat I_S$  generated by $S $ in $ \AA[x_{j_1}^{-1},\ldots, x_{j_\ell}^{-1}]$. It is proper, hence contained in a minimal prime ideal (recall that an ideal $P$ in a ring $R$ is called a minimal prime  over an ideal $I$ if $P/I$ is a minimal prime in $R/I$). It is a Poisson prime ideal  by the following fact.

\begin{lemma}\cite[Lemma 6.2]{Goo1}\label{le:Poisson core primes}
Let $\AA$ be a Poisson algebra over $\CC$ and let $\II\subset \AA$ be a Poisson ideal. Then all minimal prime ideals over $\II$ are Poisson ideals. 
\end{lemma}
 The ideal $\hat I_S$  is torus invariant by Lemma \ref{le:primes are H primes}.  Its intersection with $\AA$ yields a TPP $\II$. By construction $\II\cap {\bf x}=S\cap {\bf x}$.  We now obtain from  Proposition \ref{pr:TPP super toric} that if an exchange polynomial $P_i\in \II$ (and, therefore, $P_i\in J_S$) then  $y_i\in S$, because $S$ is defining.  We conclude that $\II\cap Y=S$. Theorem \ref{th:TPP existence}  is proved.   
\end{proof} 
\subsection{Step 4: Finiteness of the Stratification}

In order to prove the finite-ness of the stratification we need to make another assumption, in some sense saying that the cluster variables are generic in a geometric way. This assumption appears to be the hardest to verify for a given cluster algebra.

\begin{condition}  Let $\AA$ and ${\bf x}$ be as above.

\noindent(a)   We say that ${\bf x}$ is geometrically generic if for all $S\in TP_Y$ for which ${\bf x}$ is defining and all minimal Poisson primes $P$ over $S$ we have $dim(\AA/P)=n-|{\bf i}|$ where $ {\bf i}=S\cap{\bf x}$.  

\noindent(b) Let $r\in \ZZ_{\ge 0}$. We say that ${\bf x}$ is geometrically $r$-generic if each cluster that can be reached from $r$ with at most $r$ mutations is geometrically generic. 
\end{condition}

\begin{remark}
A cluster ${\bf x}$  is geometrically generic if for each  minimal Poisson prime $P$ over some set $S\in TP_Y$ for which ${\bf x}$ is defining, there exist prime ideals (not necessarily Poisson) $0=\II_0\subsetneq \II_1\subsetneq\ldots\subsetneq \II_k=P$ such that $\II_j\cap{\bf x}=\{x_{i_1},\ldots,x_{i_j}\}$ for all $1\le j\le k$.  
\end{remark}
\label{se:finiteness and defining clusters}
Recall that $m\le n$ is the number of cluster variables in each cluster. 
\begin{proposition}
\label{th:finiteness}
Let $\AA$ and ${\bf x}$ be as above, and let $S\in TP_Y$. If ${\bf x}$ is geoemtrically $m$-generic, then there exist only finitely many many TPPs $\II_1,\ldots, \II_\ell$ such that $\II_j\cap Y=S$.
\end{proposition}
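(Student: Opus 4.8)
The plan is to reduce the finiteness statement for a fixed $S \in TP_Y$ to a finiteness statement about minimal primes over a single ideal, and then bootstrap from Proposition \ref{pr:TPP super toric +} together with the chain condition built into geometric genericity. First I would fix $S \in TP_Y$ and suppose $\mathbf{x}$ is geometrically $m$-generic; write $\mathbf{i} = S \cap \mathbf{x} = \{x_{i_1},\ldots,x_{i_k}\}$ and $\mathbf{j} = \mathbf{x} - \mathbf{i}$. Any TPP $\II$ with $\II \cap Y = S$ satisfies, by Theorem \ref{th:intersection} and Proposition \ref{pr:TPP super toric +}, that $\II_{\mathbf{x}} = \II \cap \CC[x_1,\ldots,x_n] = J_S$, the ideal of the polynomial ring generated by $\mathbf{i}$. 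So every such $\II$ contains the Poisson ideal $I_S$ generated by $S$ inside $\AA$, and moreover $\II \cap \mathbf{x} = \mathbf{i}$ exactly. The key reduction is: $\II$ must be a minimal prime over $I_S$. Indeed if $I_S \subseteq \II' \subsetneq \II$ with $\II'$ prime, then $\II'$ need not be Poisson, but after replacing $\II'$ by a minimal prime over $I_S$ contained in $\II$ (which is Poisson by Lemma \ref{le:Poisson core primes} and torus-invariant by the appendix material) one gets a TPP $P \subseteq \II$ with $P \cap \mathbf{x} \supseteq \mathbf{i}$; then $P \cap \mathbf{x} = \mathbf{i}$ as well (it cannot be larger without containing a cluster variable not in $S$, contradicting $I_S \cap \CC[\mathbf x] = J_S$ applied to $P$ via Proposition \ref{pr:TPP super toric +}), and by geometric genericity $\dim(\AA/P) = n - k = \dim(\AA/\II)$, forcing $P = \II$. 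Hence every TPP $\II$ with $\II \cap Y = S$ is a \emph{minimal} Poisson prime over $I_S$.

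Next I would invoke Noetherianity: since $\AA$ is Noetherian, $I_S$ has only finitely many minimal primes over it in the usual (non-Poisson) sense, and the minimal \emph{Poisson} primes over $I_S$ are among these (a minimal Poisson prime over $I_S$ contains some minimal prime $Q$ over $I_S$, and by Lemma \ref{le:Poisson core primes} that $Q$ is already Poisson, so by minimality they coincide). Therefore there are only finitely many minimal Poisson primes over $I_S$, hence only finitely many candidates for a TPP $\II$ with $\II \cap Y = S$, namely those minimal Poisson primes $P$ over $I_S$ that are in addition torus-invariant and satisfy $P \cap Y = S$. This gives the bound $\II_1, \ldots, \II_\ell$ with $\ell < \infty$.

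The one genuine subtlety — and the step I expect to be the main obstacle — is making precise that the minimal-prime reduction really lands inside the super-toric/geometrically-generic framework so that the dimension equality $\dim(\AA/P) = n-k$ can be applied: one needs that the cluster $\mathbf{x}$, which is defining for $S$ by hypothesis (this is implicit in $S \in TP_Y$ together with the geometric-genericity hypothesis being about such $S$), remains the relevant cluster for every minimal Poisson prime $P$ over $I_S$, i.e. that $P \cap \mathbf x = \mathbf i$ and not something larger. This is exactly where Proposition \ref{pr:TPP super toric +} is used a second time: $P$ is a TPP, $\mathbf x$ is super-toric, so $P_{\mathbf x}$ is generated by $P \cap \mathbf x$; if $P \cap \mathbf x \supsetneq \mathbf i$ then $P_{\mathbf x} \supsetneq J_S = (I_S)_{\mathbf x}$, which is consistent only if $P$ is not minimal over $I_S$ among Poisson primes — contradiction unless equality holds. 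Once $P \cap \mathbf x = \mathbf i$ is pinned down, geometric $m$-genericity (the $m$ mutations are the slack needed to reach the clusters witnessing the chains $\II_0 \subsetneq \cdots \subsetneq \II_k$ in the remark) delivers $\dim(\AA/P) = n-k$, and the argument closes. I would also remark that $m$-genericity rather than mere genericity is invoked precisely because verifying that a minimal prime over $I_S$ has the expected codimension may require passing to a nearby mutated cluster in which $S$ and its image are again coordinate-like.
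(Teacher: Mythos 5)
Your overall strategy coincides with the paper's: show that each TPP $\II$ with $\II\cap Y=S$ is a minimal prime over the Poisson ideal $I_S$, and then invoke Noetherianity (via Lemma \ref{le:Poisson core primes} and the appendix) to conclude there are only finitely many such minimal primes. The dimension argument you use to force $P=\II$ — namely $\dim(\AA/P)=n-k$ from geometric genericity versus $\dim(\AA/\II)\ge n-k$ from Proposition \ref{pr:dimension form} — is exactly the paper's lemma that $\II$ is minimal over $I_S$.

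However, there is a genuine gap in the step where you write that ``${\bf x}$ is defining for $S$ by hypothesis (this is implicit in $S\in TP_Y$\dots)''. It is not. Membership of $S$ in $TP_Y$ (Definition \ref{def:TPP}) does not force the implication $x_i\in S\Rightarrow y_i\in S$; that is a separate property singled out in Definition \ref{def:defining clusters}(b). The geometric genericity condition only asserts $\dim(\AA/P)=n-|{\bf i}|$ for minimal Poisson primes over sets $S$ \emph{for which ${\bf x}$ is defining}, so for a non-defining $S$ your dimension equality — the engine of the whole minimality reduction — is simply not available, and the argument does not close. The paper resolves this by Algorithm \ref{alg:q-cl constr}: starting from ${\bf x}$ one mutates at most $m$ times (each mutation trading an $x_i\in\II$ with $y_i\notin\II$ for $y_i$) to reach a cluster that \emph{is} defining for $\II$, proves minimality of $\II$ over the corresponding ideal $I_T$ there, and then counts: finitely many clusters within $m$ mutations of ${\bf x}$, finitely many subsets $T$ of each associated set $Y_h$, and finitely many minimal primes over each $I_T$ by Noetherianity. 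This is precisely where the hypothesis of geometric $m$-genericity (rather than genericity of ${\bf x}$ alone) is consumed. You gesture at this in your closing remark, but it needs to be the backbone of the proof rather than an aside: without first passing to a defining cluster, the claim that every TPP with $\II\cap Y=S$ is a minimal Poisson prime over $I_S$ at the original cluster is unsupported.
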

\begin{proof}

 We first need some properties of geometrically generic clusters.

\begin{lemma}
\label{pr:defining clusters dim}
Suppose that ${\bf x}$ is geometrically generic  and a defining cluster for a TPP $\II\subset \AA$ with $\II\cap Y=S$.
Then $|{\bf x}-S|= dim(X_\II)$. 
\end{lemma}
\begin{proof}
 Proposition \ref{pr:dimension form} yields that $|{\bf x}-S|\le dim(\AA/\II)$.  Equality follows from the assumption that the cluster is geometrically generic. The lemma is proved.
 \end{proof}

Starting from any cluster ${\bf x}$ we can construct a defining cluster for a TPP $\II\subset \AA$ using the following algorithm. 

\begin{algorithm}
\label{alg:q-cl constr}
\noindent(a) Start with ${\bf x}$ and choose, if possible, one $i$ such that
$x_i\in S$ and $y_i\notin S$. If there exists no such $i$, then the algorithm
terminates.

\noindent(b) Consider the cluster ${\bf x}_i=(x_1,\ldots, \hat x_i,y_i,\ldots, x_n)$ and the set $S_i$, defined for
${\bf x}_i$, just as  $S$ is defined for ${\bf x}$. 

\noindent(c) Repeat Step (a) with ${\bf x}={\bf x}_i$.
\end{algorithm}

The algorithm terminates after at most $m$ iterations, and we obtain a cluster
${\bf x'}$ such that $\II\cap Y'$ is defining. 

From now on, we may therefore assume that ${\bf x}$ is defining for a TPP $\II$ with $\II\cap Y=S$.

\begin{lemma}
  The ideal $\II$ is a minimal prime over $I_S$.
\end{lemma}

\begin{proof}
Suppose that $\II$ is not minimal over $I_S$ and let $\II'$ be a minimal prime over $I_S$ such that $\II\supset I_S$. Then $dim(X_\II)<  dim(X_{\II'})\le|S\cap\{\bf x \}|$. This, however, contradicts  Proposition \ref{pr:dimension form}. The lemma is proved.
\end{proof}

Now, let ${\bf x}$ be any cluster and  $Y$ as above. There are only finitely many clusters ${\bf x}_1,\ldots,{\bf x}_p$ that one can reach from ${\bf x}$ with at most $m$ mutations, each of which is geometrically  generic by our assumptions.  Denote by $Y_1,\ldots, Y_p$ the corresponding sets "$Y$".  Each TPP is a minimal prime over some $I_{T}$ where $T\subset Y_h$ for some $1\le h\le p$.   The union of the power sets of $Y_1,\ldots, Y_p$ is finite. We assumed $\AA$ to be Noetherian, hence there exist only finitely many minimal prime ideals over any given ideal $I_{T}$. Proposition \ref{th:finiteness} is proved.
\end{proof}
 
 Hence, we have proved Theorem \ref{th:Stratification total}, in the following  precise form.
 
 \begin{theorem}
\label{th:Stratification total 2} 
Let $\mathfrak{A}$ be a Noetherian Poisson cluster algebra  over the complex numbers, corresponding to the triple
$({\bf x}, B,\Lambda)$ introduced above.  If  the  seed is super-toric and  geometrically $m$-generic,  then  $\AA$  contains only finitely many Poisson prime ideals which are invariant under the global toric actions.  
\end{theorem}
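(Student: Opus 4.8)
The plan is to assemble Theorem~\ref{th:Stratification total 2} directly from the four ``Steps'' developed above, treating ``super-toric'' and ``geometrically $m$-generic'' as the precise meaning of ``generic'' in the informal statement of Theorem~\ref{th:Stratification total}. First I would fix a super-toric, geometrically $m$-generic cluster ${\bf x}$ (which exists by hypothesis, and whose nearby mutates are likewise geometrically generic by part (b) of the geometric genericity condition), together with the associated finite set $Y=\{x_1,\ldots,x_n,y_1,\ldots,y_m\}\cup\{1\}$ and the set $TP_Y$ of Definition~\ref{def:TPP}. The key point is that every TPP of $\AA$ is pinned down, up to finite ambiguity, by a combinatorial shadow in $Y$: by Theorem~\ref{th:intersection}, for any TPP $\II$ the set $S:=\II\cap Y$ lies in $TP_Y$, and since the power set of $Y$ is finite, the TPPs are partitioned into finitely many classes indexed by elements $S\in TP_Y$. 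It therefore suffices to show that each class — the set of TPPs $\II$ with $\II\cap Y=S$ for a fixed $S\in TP_Y$ — is finite.

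The second half of the argument is exactly Proposition~\ref{th:finiteness}: given $S\in TP_Y$, Algorithm~\ref{alg:q-cl constr} converts ${\bf x}$ into a defining cluster ${\bf x}'$ for any TPP $\II$ in the class (this takes at most $m$ mutations, so ${\bf x}'$ is still geometrically generic by $m$-genericity), and relative to ${\bf x}'$ the ideal $\II$ is a minimal prime over $I_{S'}$ where $S'=\II\cap Y'$ is the translate of $S$. Since $\AA$ is Noetherian, there are only finitely many minimal primes over any fixed ideal $I_{S'}$; and there are only finitely many clusters reachable from ${\bf x}$ within $m$ mutations, hence only finitely many possible $(Y',S')$. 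Putting these together bounds the number of TPPs in the class, and summing over the finitely many $S\in TP_Y$ gives the theorem.

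Concretely, then, the proof reads: \emph{Let $\II$ be a TPP of $\AA$. By Theorem~\ref{th:intersection}, $S:=\II\cap Y\in TP_Y$, which is a finite set since $TP_Y\subset 2^Y$ and $Y$ is finite. By Proposition~\ref{th:finiteness} (applicable because ${\bf x}$ is geometrically $m$-generic), for each $S\in TP_Y$ there are only finitely many TPPs $\II$ with $\II\cap Y=S$. Hence the total number of TPPs is finite.} I would also remark that by the Remark following Theorem~\ref{th:Stratification total} the same argument applies verbatim to Noetherian upper cluster algebras and upper bounds, since all the inputs — the Laurent phenomenon, Proposition~\ref{pr:TPP super toric}, the localization lemma, and Noetherianity — are available in that generality.

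The main obstacle, conceptually, is already absorbed into the earlier steps rather than this final splicing: it is Proposition~\ref{pr:TPP super toric} and its super-toric refinement Proposition~\ref{pr:TPP super toric +}, which force $\II_{\bf x}$ to be generated by a subset of the cluster — this is what makes the combinatorial bookkeeping via $TP_Y$ possible at all, and it is where the rank condition $\operatorname{rank}(T+\operatorname{Im}\Lambda)=n$ (equivalently, super-toricity in the presence of coefficients) is essential. The only genuinely new work at the level of Theorem~\ref{th:Stratification total 2} is checking that the hypotheses ``super-toric'' and ``geometrically $m$-generic'' are exactly the hypotheses needed by Theorems~\ref{th:intersection} and Proposition~\ref{th:finiteness} respectively, and that the $m$-mutation neighborhood invoked by the algorithm stays within the geometrically generic locus — which is immediate from the definition of $m$-genericity. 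So the finiteness statement follows formally, and I would keep the proof short, pointing to the four steps.
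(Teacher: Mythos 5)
Your proposal is correct and takes essentially the same route as the paper: Theorem \ref{th:Stratification total 2} is obtained there precisely by combining Theorem \ref{th:intersection} (the intersection of a TPP with $Y$ lies in the finite set $TP_Y$) with Proposition \ref{th:finiteness} (for each $S\in TP_Y$ only finitely many TPPs have that intersection, via Algorithm \ref{alg:q-cl constr}, minimality over $I_S$, and Noetherianity). Your bookkeeping of which hypothesis (super-toric versus geometrically $m$-generic) feeds which step matches the paper's usage exactly.
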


 \subsection{Torus invariant Poisson prime ideals in $\CC[G(2,5)]$}
The set $Y$ we have to consider is 
$$Y=\{   \Delta_{13} ,\Delta_{24},\   \Delta_{14},\
\Delta_{35},\ \quad \Delta_{12},\ \Delta_{23},\ \Delta_{34},\ \Delta_{45},\
\Delta_{15}, \} \ .$$
The TPPs that have co-dimension one are generated by
the coefficients $\Delta_{12}$, $\Delta_{23}$, $\Delta_{34}$, $\Delta_{45}$,
$\Delta_{15}$. Notice that the cluster variables which are not coefficients cannot generate Poisson ideals, as e.g.~
$$\{\Delta_{13},\Delta_{24}\}=2\Delta_{14}\Delta_{23}\ .$$
Now, consider  $S=\{\Delta_{12},\Delta_{23}\}\subset Y$. $S $ does not define a
toric Poisson prime ideal, since an ideal that contains $S$
but neither $\Delta_{13}$ nor $\Delta_{24}$  cannot be prime because of the Pl\"ucker
relation (Equation \ref{eq:Pluecker rel}) 
$$\Delta_{13}\Delta_{24}=\Delta_{14}\Delta_{23}+\Delta_{12}\Delta_{34} \in\II\
.$$
 However, one  easily verifies that  $S_1=
\{\Delta_{12},\Delta_{23},\Delta_{13}\}$,
$S_2=\{\Delta_{12},\Delta_{23},\Delta_{24}\}$ and
$S_3=\{\Delta_{12},\Delta_{23},\Delta_{13},\Delta_{24}\}$ define toric Poisson
prime ideals.  Observe that the first two have co-dimension two, while the third one has co-dimension three.

\subsection{Varieties with singularities}
\label{se:singularities}

Recall that the cluster manifold defined by Gekhtman, Shapiro and Vainsthein in
\cite[Section 2.1]{GSV} is smooth.  If the variety $X$ defining a cluster
algebra
$\AA=\CC[X]$ is singular, then the cluster manifold will be a smooth
submanifold. Notice, however, that the singular points are the zero locus of a
Poisson ideal (see \cite{Pol}).  Thus, we are able to recover the whole
singular variety, as
the following example shows which is adapted from
an example in \cite{GSV}.

\begin{example}
\label{ex:Singularities}
  Consider a cluster algebra $\AA$ over $\CC$ defined by two clusters
$(x_1, x_2, x_3)$
  and $(x_1', x_2, x_3)$ with exchange relation $x_1 x_1' = x_2^2 + x_3^2$ and
compatible Poisson structure 
$$\{x_1,x_2\}=x_1x_2\ ,\quad   \{x_1,x_3\}=-x_1x_3\ ,\quad \{x_2,x_3\}=0 \ .$$

  We have $\AA \cong \CC [a, b, c, d] / (ab = c^2 + d^2)$, which
defines a
  hypersurface $X$ in $\CC^4$ with a singularity at $a = b = c = d = 0$.
Now let us determine the quotient cluster algebras. It is easy to see that there
are the following toric Poisson ideals (we only list the generators) $\langle x_2\rangle$, $\langle
x_3\rangle$,$\langle x_2,x_3\rangle$,$\langle x_1,x_2,x_3\rangle$,$\langle
x_1',x_2,x_3\rangle$ and $\langle x_1, x_1',x_2,x_3\rangle$. Gekhtman, Shapiro
and Vainsthein show in \cite{GSV} that $X \backslash \{(0, 0, 0, 0)\}$ is the
cluster manifold. In our picture, we observe that the singularity $(0, 0, 0, 0)$ defines a toric Poisson ideal
in $\mathfrak{A}$.  
\end{example}

\section{Acyclic Cluster Algebras}
\label{se:acyclic cl}
We will now apply Theorem \ref{th:Stratification total} to the case of certain acyclic cluster algebras, namely those which have a seed  $({\bf x},B)$ where $B$ is a skew-symmetric $n\times n$-matrix with $b_{ij}>0$ if $i<j$. 
Berenstein, Fomin and Zelevinsky proved in \cite{BFZ} that such a  cluster algebra $\AA$ is equal to both its lower and upper bounds. Thus, it is  Noetherian and, if $B$ has full rank, a Poisson algebra with the Poisson brackets given by compatible  pairs  $(B,\Lambda)$ with $\Lambda=\mu B^{-1}$ for certain $\mu\in \ZZ$. In order for $B$ to have full rank we have to assume that $n=2k$ is even. Finally, notice that there are no global toric actions as $ker(B)$ is trivial. Hence all Poisson prime ideals are TPPs.  Let $P_i=m_i^+ +m_i^-$ where $m_i^+$ and $m_i^-$ denote the monomial terms in the exchange polynomial. Then $\{y_i,x_i\}=\mu_1m_i^+ +\mu_2m_i^-$ for some $\mu_1,\mu_2\in \ZZ$.  We, additionally, want to require that $\mu_1\ne \mu_2$. To assure this, we assume that

\begin{equation}
\label{eq:poisson gen}
\sum_{j=1}^n (b^{-1})_{ij}\left(max(b_{ij},0)+min(b_{ij},0)\right)\ne 0\ 
\end{equation}  
for all $i\in [1,n]$. We have the following result.
\begin{theorem}
\label{th:acyclic}
Let $\AA$ be an acyclic cluster algebra over $\CC$ with trivial coefficients of even rank $n=2k$, given by a seed $(x_1,\ldots, x_{n}, B)$ where $B$ is a skew-symmetric $n\times n$-integer matrix satisfying  $b_{ij}>0$ if $i<j$ and suppose that  $B$ and $B^{-1}$ satisfy Equation \ref{eq:poisson gen} for each $i\in[1,n]$. Then, the Poisson cluster algebra  defined by a compatible pair $(B,\Lambda)$ where $\Lambda=\mu B^{-1}$  with  $0\ne \mu\in \ZZ$ contains no non-trivial Poisson prime ideals.
\end{theorem}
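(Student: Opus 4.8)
The plan is to show that a nonzero Poisson prime ideal $\II$ in such an acyclic cluster algebra leads to a contradiction, by exploiting the fact that every cluster here is super-toric (since $\ker(B)=0$ and $B\Lambda$ has full rank, so $\mathrm{rank}(T_{\mathbf i}+\mathrm{Im}(\Lambda_{\mathbf i}))$ can be checked to equal $n-k$ for all $\mathbf i$ using that $B^{-1}$, and hence each $\Lambda_{\mathbf i}$-related submatrix, is nondegenerate enough). Then Proposition~\ref{pr:TPP super toric +} applies: $\II_{\mathbf x}=\II\cap\CC[x_1,\ldots,x_n]$ is generated by a nonempty subset $\mathbf i=\{x_{i_1},\ldots,x_{i_k}\}\subset\mathbf x$, and in particular $\II$ contains some cluster variable $x_i$. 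So it suffices to rule out the case that $\II$ contains a single generator $x_i$ minimally, i.e. that some codimension-one $\langle x_i\rangle$ is Poisson prime; equivalently, since $\II$ is Poisson, $x_i\in\II$ forces $\{x_i,a\}\in\II$ for all $a$.

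The key local computation is the one already set up before the theorem statement. Writing $P_i=m_i^++m_i^-$ for the exchange polynomial and $x_ix_i'=P_i$ (here $x_i'=y_i$), one computes $\{x_i,y_i\}=\{x_i,x_i^{-1}P_i\}=x_i^{-1}\{x_i,P_i\}$, and using log-canonicity of the bracket in the cluster $\mathbf x$ this evaluates to $\mu_1 m_i^+ +\mu_2 m_i^-$ for integers $\mu_1,\mu_2$ that are explicit linear combinations of the entries of $\Lambda=\mu B^{-1}$ against the rows of $B$. The hypothesis, Equation~\ref{eq:poisson gen}, is precisely the condition $\mu_1\neq\mu_2$. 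Now suppose $x_i\in\II$. Then $y_i=x_i^{-1}P_i$ need not lie in $\AA$, but $P_i=x_iy_i\in\II$, and also $\{x_i,P_i\}=x_i\{x_i,y_i\}+y_i\{x_i,x_i\}\cdot(\ldots)$—more carefully, from $x_i\in\II$ and $\II$ Poisson we get $\{x_i,a\}\in\II$ for every $a\in\AA$; applying this with a suitable $a$ (or arguing in a localization as in the proof of Theorem~\ref{th:TPP existence}) and combining with $P_i=m_i^++m_i^-\in\II$, the inequality $\mu_1\neq\mu_2$ lets us produce one of the two monomials $m_i^\pm$ inside $\II$. That monomial is a product of cluster variables other than $x_i$, so primeness forces some $x_j\in\II$ with $j\neq i$.

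Iterating, $\II$ must then contain at least two cluster variables, and because the quiver is acyclic with $b_{jk}>0$ for $j<k$, the exchange polynomials $P_j$ have monomials that, as we keep feeding cluster variables into $\II$, eventually force \emph{all} of $x_1,\ldots,x_n$ into $\II$: concretely, if $\mathbf i=\II\cap\mathbf x$ is a proper nonempty subset, pick $j\notin\mathbf i$; then looking at $P_j$ and the bracket $\{x_j,P_j\}$ (which by the same $\mu_1\neq\mu_2$ argument, now for index $j$, forces one monomial term of $P_j$ into $\II$ once enough neighbors of $j$ are in $\mathbf i$), one shows $\mathbf i$ cannot be closed under the combinatorics of the acyclic quiver unless $\mathbf i=\mathbf x$. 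Hence $\II\supset\langle x_1,\ldots,x_n\rangle$, which is the irrelevant maximal ideal; but that ideal is not Poisson-prime-proper in the sense required—$\II=\AA$ is excluded by "prime," and $\langle x_1,\ldots,x_n\rangle$ itself is not stable under the bracket (the same computation $\{x_i,y_i\}=\mu_1m_i^++\mu_2 m_i^-$ shows $\langle x_1,\ldots,x_n\rangle$ fails to be Poisson when some $b$-entry is nonzero), so no nontrivial TPP survives.

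The main obstacle I expect is the \emph{propagation step}: turning the single local statement "$x_i\in\II\Rightarrow$ some $x_j\in\II$, $j\neq i$" into "$\II\cap\mathbf x$ cannot be a proper nonempty subset." This requires using the acyclicity ($b_{jk}>0$ for $j<k$) to guarantee that for any proper nonempty $\mathbf i\subset\mathbf x$ there is an index $j$ whose exchange monomial terms $m_j^\pm$ each involve variables in $\mathbf i$ in a way that, after applying the $\mu_1\neq\mu_2$ argument at $j$, forces a new element into $\mathbf i$; one also has to make sure the genericity hypothesis (Equation~\ref{eq:poisson gen} for \emph{all} $i$, for both $B$ and $B^{-1}$) is exactly what is needed at every index, and to handle the bookkeeping in a localized Poisson algebra $\AA[x_{j_1}^{-1},\ldots,x_{j_\ell}^{-1}]$ so that dividing by $x_i$ is legitimate, as in Step 3. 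The verification that every cluster is super-toric is routine given $\ker(B)=0$ and Lemma~\ref{le:full rank}, and geometric genericity is vacuous here since there is nothing to stratify once we show $\II$ must be trivial.
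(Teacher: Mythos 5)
Your local computation is exactly the engine of the paper's proof: from $x_i\in\II$ you get both $P_i=x_iy_i\in\II$ and $\{y_i,x_i\}=\mu_1m_i^++\mu_2m_i^-\in\II$ (note $y_i$ \emph{is} a cluster variable, hence lies in $\AA$, so no localization is needed for this step), and $\mu_1\ne\mu_2$ then puts both monomials $m_i^\pm$ into $\II$. But your global step — an ascending propagation forcing all of $x_1,\ldots,x_n$ into $\II$ and then disqualifying $\langle x_1,\ldots,x_n\rangle$ — is where the gap lies, and you flag it yourself. The propagation as stated does not go through: for $j\notin\II\cap{\bf x}$ you cannot invoke the $\mu_1\ne\mu_2$ argument at $j$ (it needs $x_j\in\II$ as input), and there is no argument given that a proper nonempty $\II\cap{\bf x}$ fails to be ``closed'' under the quiver combinatorics. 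The endgame is also off: $\langle x_1,\ldots,x_n\rangle$ is not the irrelevant maximal ideal here — since $m_1^-$ is the empty product, $x_1y_1=m_1^++1$ shows $1\in\langle x_1,\ldots,x_n\rangle$, so that ideal is all of $\AA$; the discussion of whether it is Poisson is moot.

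The paper closes the argument by a \emph{descending} minimality trick instead, which avoids propagation entirely. Let $i_1$ be the \emph{smallest} index with $x_{i_1}\in\II\cap{\bf x}$. Because $b_{ij}>0$ for $i<j$, the negative entries of row $i_1$ occur only in columns $h<i_1$, so $m_{i_1}^-=\prod_h x_h^{-b_{i_1h}^-}$ is a product of variables $x_h$ with $h<i_1$ (the empty product $1$ if $i_1=1$). Your own computation gives $m_{i_1}^-\in\II$, and primeness then forces $x_h\in\II$ for some $h<i_1$, contradicting minimality of $i_1$, or $1\in\II$, contradicting properness. This single step replaces your entire iteration; the only other input is Proposition \ref{pr:TPP super toric} (the basic version, valid because $(B,\Lambda)$ is a compatible pair), so your unverified claim that every subset ${\bf i}$ satisfies the super-toric rank condition is not needed either.
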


\begin{proof}
 Suppose that there exists a non-trivial TPP or Poisson prime ideal $\II$. Then, $\II\cap {\bf x}$ is nonempty by Proposition \ref{pr:TPP super toric}, hence $\II\cap {\bf x}=\{x_{i_1},\ldots, x_{i_j}\}$ for some $1\le i_1\le i_2\le \ldots\le i_j\le 2k$. Note that ${\bf x}$ does not need to be defining for the ideal $\II$.  Observe that if $b_{i_1,h}<0$, then $x_h\notin \II$ for all $1\le h\le n$. Additionally, observe that   $P_{i_1}=m_{i_1}^++m_{i_1}^-$ has to be contained in $\II$, as well as 
 $$\{y_{i_1},x_{i_1}\}=\mu_1m_{i_1}^+ +\mu_2m_{i_1}^-\ .$$ 
 By our assumption, we have $\mu_1\ne \mu_2$, and therefore $m_{i_1}^-\in \II$.   Hence, $x_h\in\II$ for some $h\in[1, i_{1}-1]$  or $1\in \II$. We obtain the desired contradiction and the theorem is proved.
\end{proof}

   The theorem has the following corollary which was also independently proved by Muller very recently \cite{Mu 1}, though in more generality.
\begin{corollary} Let $\AA$ be as in Theorem \ref{th:acyclic}. Then, the variety $X$ defined by $\AA=\CC[X]$ is smooth. 
\end{corollary}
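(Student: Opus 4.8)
The plan is to show that smoothness of $X$ follows from Theorem \ref{th:acyclic} together with the standard fact that the singular locus of a Poisson variety is itself the zero set of a Poisson ideal. Concretely, let $X$ be the affine variety with $\CC[X] = \AA$, and let $\Sigma \subset X$ denote its singular locus. By the cited result of Polishchuk (\cite{Pol}), which is already invoked in Section \ref{se:singularities}, the ideal $I(\Sigma) \subset \AA$ of functions vanishing on $\Sigma$ is a Poisson ideal: the Jacobian/Poisson structure is preserved under the Hamiltonian flows, so the singular locus is flow-invariant and hence cut out by a Poisson ideal.

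Next I would argue that $I(\Sigma)$, being a radical Poisson ideal, is an intersection of Poisson prime ideals. Since $\AA$ is Noetherian (it equals its lower and upper bounds by \cite{BFZ}), $I(\Sigma)$ has finitely many minimal primes $\mathfrak{p}_1,\dots,\mathfrak{p}_r$, and by Lemma \ref{le:Poisson core primes} each $\mathfrak{p}_i$ is again a Poisson ideal, hence a Poisson prime ideal. But by Theorem \ref{th:acyclic} the only Poisson prime ideals of $\AA$ are the trivial ones, namely $0$ and $\AA$ itself (recall that in this setting $\ker(B)$ is trivial, so every Poisson prime is automatically a TPP). A minimal prime over a proper radical ideal cannot be the unit ideal, so each $\mathfrak{p}_i = 0$; since $0$ is prime this forces $I(\Sigma) = 0$, i.e. $\Sigma = \emptyset$ (using that $X$ is reduced and irreducible — $\AA$ is a domain, being a subalgebra of the field $\mathfrak{F}$). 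Therefore $X$ is smooth.

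There is one point that needs a little care: one must check that $X$ is nonempty and that $\Sigma \ne X$, so that $I(\Sigma)$ is a \emph{proper} radical ideal and the minimal-prime argument applies. This is automatic since $\AA$ is a finitely generated domain over $\CC$, so $X$ is an irreducible affine variety of dimension $n$, and the singular locus of a variety over a perfect field is always a proper closed subset. One should also note that here there are no coefficients and no torus action to worry about, so "Poisson prime" and "TPP" coincide, which is exactly why Theorem \ref{th:acyclic} applies verbatim.

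I expect the main (and essentially only) obstacle to be the invocation that the singular locus is the zero locus of a Poisson ideal: this is the geometric input from \cite{Pol} and deserves a precise citation, since it is the bridge between the purely algebraic Theorem \ref{th:acyclic} and the geometric conclusion. Everything else is formal commutative algebra: radical ideals decompose into minimal primes, minimal primes over a Poisson ideal are Poisson (Lemma \ref{le:Poisson core primes}), and a domain whose only Poisson primes are $0$ and $\AA$ can have no singularities. No nontrivial calculation is required.
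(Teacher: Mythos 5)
Your proof is correct and follows essentially the same route as the paper's: the singular locus is cut out by a Poisson ideal (the paper also cites \cite{Pol} via Section \ref{se:singularities}), a minimal prime over it is a Poisson prime by Lemma \ref{le:Poisson core primes}, and Theorem \ref{th:acyclic} rules out any nontrivial Poisson prime. One small slip in wording: $I(\Sigma)=0$ would mean $\Sigma=X$, not $\Sigma=\emptyset$, so the contradiction you actually need is with the fact --- which you do record in your caveat paragraph --- that the singular locus of an irreducible $\CC$-variety is a proper closed subset; once that is in place the argument closes exactly as the paper's does.
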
   
   \begin{proof}
   The singular subset is contained in  a Poisson ideal of co-dimension greater or equal to one (see Section \ref{se:singularities}). The Poisson  ideal must be contained in a proper Poisson prime ideal by Lemma \ref{le:Poisson core primes}. 
   The assertion follows.
   \end{proof}
   
\begin{remark}
The assumption that the cluster algebra has even rank is very important. Indeed, Muller has recently shown that the variety corresponding to the cluster algebra of type $A_3$ has a singularity (\cite[Section 6.2]{Mu}).
\end{remark}   

\begin{remark}
 We believe that our results also extend to the locally acyclic cluster algebras introduced in \cite{Mu 1}. However, it should be possible to show that the variety has additionally the structure of a (holomorphic) symplectic manifold.
\end{remark}   

  \section{Explicit Description of Ideals and COS}
  \label{se:COS}
  In the following section we make some additional assumptions about the cluster algebras in questions and their TPPs. In particular, we assume  the following {\it COS} Condition. We use the terminology of a defining cluster, introduced in Section \ref{se:finiteness and defining clusters}. We will also refer to a Noetherian Poisson cluster algebra, simply as a cluster algebra.
  
  \begin{condition}{\bf Codimension One Strata (COS)}\label{cond:COS}
  Let $\AA$ be a  cluster algebra of rank $n$, resp.~upper cluster algebra or upper bound. We say that $\AA$ satisfies COS if for each $1\le k\le \ell\le n$ and each pair of  TPPs $\II\subset \II'$ of co-dimension $n-\ell$, resp. $n-k$, there exists a chain of of TPPs $\II=\II_0\subsetneq \II_1  \subsetneq \ldots \subsetneq \II_{k-\ell}=\II'$.
  \end{condition}

  \begin{remark}
  Once again, we will ignore the case of upper cluster algebras, resp.~upper bounds, however, the arguments are analogous.
  \end{remark}
 Notice that the condition implies the following.

  \begin{lemma}
  \label{le:COSD}
  Let $\AA$ be a cluster algebra, satisfying COS with a geometrically $m$-generic cluster ${\bf x}$ . For each pair of TPPs $\II\subsetneq \II'$ there exists a defining cluster ${\bf x}'$ and a sequence $(x_{i_1}',x_{i_2}',\ldots, x_{i_k}')\in {\bf x}'$, as well as TPPs $\II=\II_0\subsetneq\II_1\subsetneq \ldots\subsetneq \II_k=\II'$ for which ${\bf x}'$ is defining, such that for all $j\in [1,k]$
  \begin{equation}
  \label{eq:cardinalities} 
  \II_j\cap{\bf x}'=(\II\cap{\bf x})\cup\{x_{i_1}',x_{i_2}',\ldots, x_{i_j}'\}\ .
  \end{equation}
  \end{lemma}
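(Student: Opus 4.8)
The plan is to deduce Lemma \ref{le:COSD} directly from the COS condition (Condition \ref{cond:COS}) together with the machinery of defining clusters and Algorithm \ref{alg:q-cl constr} developed in Section \ref{se:finiteness and defining clusters}. Given a pair of TPPs $\II\subsetneq\II'$, set $\ell=n-\mathrm{codim}(\II)$ and $k'=\mathrm{codim}(\II')-\mathrm{codim}(\II)$, so that COS supplies a strictly increasing chain of TPPs $\II=\II_0\subsetneq\II_1\subsetneq\ldots\subsetneq\II_{k'}=\II'$. Each consecutive inclusion $\II_{j-1}\subsetneq\II_j$ has codimension drop exactly one, because the chain is strict and has length equal to the total codimension difference $k'$. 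The point of the lemma is to upgrade this abstract chain to one that is visible inside a single extended cluster ${\bf x}'$: i.e.\ to find a cluster that is simultaneously \emph{defining} for every $\II_j$ and in which the jump from $\II_{j-1}$ to $\II_j$ is recorded by the addition of a single new cluster variable $x_{i_j}'$.

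The key steps, in order, are as follows. First, I would apply Algorithm \ref{alg:q-cl constr} to the largest ideal $\II'=\II_{k'}$, starting from the given geometrically $m$-generic cluster ${\bf x}$, to obtain a cluster ${\bf x}'$ that is defining for $\II'$; by the discussion after the algorithm this terminates after at most $m$ mutations and, since we assumed ${\bf x}$ is geometrically $m$-generic, ${\bf x}'$ is again geometrically generic. Second, I would observe that a defining cluster for $\II'$ is automatically defining for every $\II_j\subseteq\II'$: if $x_i\in\II_j\subseteq\II'$ then $y_i\in\II'$ since ${\bf x}'$ is defining for $\II'$, and then $y_i\in\II_j$ follows because $x_iy_i=P_i\in\II_j$ (as $x_i\in\II_j$) forces one of the monomial terms of $P_i$ — hence, after combining with the Poisson relation as in the proof of Proposition \ref{pr:TPP super toric +}, a cluster variable — into the prime ideal $\II_j$; a cleaner route is to invoke Theorem \ref{th:intersection}, which shows $\II_j\cap Y'=S_j\in TP_{Y'}$, so $P_i\in J_{S_j}$ and Lemma \ref{le:TP implies}(2) gives $x_i\in S_j$ or $y_i\in S_j$, and since $x_i\in S_j$ already, the defining property of the \emph{set} $S_j$ (inherited because $x_i\in S_j\subset S_{k'}$ which is defining) yields $y_i\in S_j$. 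Third, with ${\bf x}'$ defining and geometrically generic for each $\II_j$, Lemma \ref{pr:defining clusters dim} gives $|{\bf x}'\setminus S_j|=\dim(X_{\II_j})=n-\mathrm{codim}(\II_j)=\ell+j$, hence $|S_j\cap{\bf x}'|=\mathrm{codim}(\II_j)-(n-n)$, i.e.\ $|\II_j\cap{\bf x}'|$ increases by exactly one as $j$ increases by one, while $\II_{j-1}\cap{\bf x}'\subseteq\II_j\cap{\bf x}'$ by the inclusion $\II_{j-1}\subseteq\II_j$. Therefore there is a unique new variable $x_{i_j}'\in(\II_j\cap{\bf x}')\setminus(\II_{j-1}\cap{\bf x}')$, and setting these up in order produces the sequence $(x_{i_1}',\ldots,x_{i_k}')$ and Equation \ref{eq:cardinalities}, with $k=k'$.

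I expect the main obstacle to be the second step: cleanly establishing that a cluster defining for the top ideal $\II'$ is defining for all intermediate TPPs, and that this is compatible with the codimension-counting. The subtlety is that Algorithm \ref{alg:q-cl constr} is stated relative to one fixed ideal/one fixed set $S$, and one must check that the mutated cluster ${\bf x}'$ it produces for $\II'$ does not spoil genericity or the defining property for the smaller ideals — this is exactly where geometric $m$-genericity of the initial cluster ${\bf x}$ is used (to guarantee every cluster within $m$ mutations is geometrically generic), and where Theorem \ref{th:intersection} and Lemma \ref{pr:defining clusters dim} must be applied to each $\II_j$ in turn. Once the bookkeeping is set up so that $|\II_j\cap{\bf x}'|$ is forced to equal $|\II\cap{\bf x}|+j$, the conclusion is immediate; the remaining work is purely the verification that all the hypotheses of those earlier results are in force for each link in the chain, which is routine given the COS assumption supplying the strict chain with unit codimension steps.
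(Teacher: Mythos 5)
Your overall strategy (get the unit-codimension chain from COS, produce one cluster defining for every link, then count via Proposition \ref{pr:defining clusters dim}) has the right shape, and your dimension bookkeeping at the end matches the paper's. But there is a genuine gap in your second step, exactly at the point you flag as ``the main obstacle'': a cluster that is defining for the \emph{largest} ideal $\II'$ is \emph{not} automatically defining for the intermediate ideals $\II_j$. From $x_i\in\II_j\subseteq\II'$ and the defining property for $\II'$ you only get $y_i\in\II'$, and nothing rules out $y_i\in\II'\setminus\II_j$; this is perfectly consistent with $\II_j$ being prime, since the containment $x_iy_i=P_i\in\II_j$ is already explained by $x_i\in\II_j$ alone. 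Your two attempted repairs both fail: $P_i\in\II_j$ does not force a monomial term of $P_i$ into the prime ideal $\II_j$ (a prime ideal need not contain a summand of each of its elements), and the appeal to Lemma \ref{le:TP implies}(2) only returns the disjunction ``$x_i\in S_j$ or $y_i\in S_j$'', which is already satisfied by $x_i\in S_j$ and so yields nothing about $y_i$. The final clause --- that the defining property of $S_j$ is ``inherited because $S_j\subset S_{k'}$ which is defining'' --- is circular: defining-ness of a set is not inherited by subsets, since $x_i\in S_j\subset S_{k'}$ gives $y_i\in S_{k'}$, not $y_i\in S_j$.

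The paper avoids this by running the construction in the opposite direction. It first builds a defining cluster ${\bf x}''$ for the \emph{smaller} ideal $\II$ via Algorithm \ref{alg:q-cl constr}, and then continues mutating to make it defining for the larger ideal $\JJ$. The key observation is that any index $i$ at which one must mutate for $\JJ$ (i.e.\ $x_i''\in\JJ$ but $y_i''\notin\JJ$) satisfies $x_i''\notin\II$ --- for otherwise the defining property for $\II$ would give $y_i''\in\II\subseteq\JJ$, a contradiction --- so these later mutations never touch the variables recording $\II$, and every intermediate cluster stays defining for $\II$. Iterating this bottom-up along the COS chain produces a single cluster defining for all the $\II_j$ simultaneously, after which your cardinality count via Proposition \ref{pr:defining clusters dim} goes through. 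If you reverse the order of your construction accordingly, your argument becomes the paper's.
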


\begin{proof}
The assertion follows from the following observation: Let $\II$ and $\JJ$ be two TPPs such that $\II\subset \JJ$ and let ${\bf x}$ be an $m$-generic cluster. We can construct a cluster that is defining for both $\JJ$ and $\II$ by first constructing a defining cluster ${\bf x"}$ for $\II$ using Algorithm \ref{alg:q-cl constr} and, afterwards constructing a defining  cluster for $\JJ$. Indeed, if $x_i"\in \JJ$ but $y_i"\notin \JJ$ for some $x_i"\in {\bf x}"$, then  $x_i"\notin \II$ and $y_i"\notin\II$, because ${\bf x}"$ is defining for $\II$. Hence, all the clusters obtained while constructing a defining cluster for $\JJ$ from ${\bf x}"$ will also be defining for $\II$. Equation \ref{eq:cardinalities} now follows from Proposition \ref{pr:defining clusters dim} and the fact that all clusters constructed are geometrically generic as this implies that $\dim(\AA/\II)=|{\bf x}"-\II|$, resp.~$\dim(\AA/\JJ)=|{\bf x}"-\JJ|$. The lemma is proved.   
\end{proof}
  
  Now, let $\AA$ be a cluster algebra, $\II$ a TPP and ${\bf x}$ a defining cluster. Let $\II\cap {\bf x}=\{x_{i_1},x_{i_2},\ldots, x_{i_k}\}$ and let $\{x_{j_1},\ldots x_{j_\ell}\}={\bf x}-\II$.
Let $\{j_1,\ldots, j_p\}$ be a $p$-element subset of $\{1,\ldots,n\}$.   Notice that the $p\times p$-submatrix $\Lambda_{j_1,\ldots, j_p}$ of the Poisson coefficient matrix $\Lambda$ spanned by the rows and columns labeled by $\{j_1,\ldots, j_p\}$ defines a Poisson bracket on $\CC[x_{j_1}^{\pm 1},\ldots x_{j_p}^{\pm 1}]$.
 We have the following main result.
 
 \begin{theorem}
 \label{th:ideals descr}
 (a) Let $\AA$ be as above satisfying  COS, and let ${\bf x}$ be a defining cluster for the TPP $\II$ and let ${\bf x}$ be geometrically generic and super-toric. Let $z=\sum_{\alpha\in \ZZ^n} c_\alpha x^\alpha\in \AA$, where $c_\alpha\in \CC$. We have $z\in \II$ if and only if $c_\alpha\ne 0$ implies that $\alpha_{i_j} \ne 0$ for some $j\in [1,k]$.   
 
\noindent (b)There exists an injective Poisson algebra homomorphism 
 $$\AA/\II\hookrightarrow \CC[x_{j_1}^{\pm 1},\ldots x_{j_\ell}^{\pm 1}]$$ with Poisson bracket given by $\Lambda_{j_1,\ldots, j_\ell}$,  which sends the image of $x_{j_r}$ in $\AA/\II$ to  $x_{j_r}\in \CC[x_{j_1}^{\pm 1},\ldots x_{j_\ell}^{\pm 1}]$ for all $1\le r\le \ell$.
 \end{theorem}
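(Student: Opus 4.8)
\textbf{Proof plan for Theorem \ref{th:ideals descr}.}
The plan is to reduce everything to the localized picture already built in the proof of Theorem \ref{th:TPP existence}. Recall that there $\II$ was obtained as the contraction to $\AA$ of a minimal prime $\hat I_S$ over the ideal generated by $S=\II\cap Y$ in the localization $\AA[x_{j_1}^{-1},\ldots,x_{j_\ell}^{-1}]$, where $\{x_{j_1},\ldots,x_{j_\ell}\}={\bf x}-\II$. Since ${\bf x}$ is super-toric, Proposition \ref{pr:TPP super toric +} gives $\II\cap\CC[x_1,\ldots,x_n]=J_S=(x_{i_1},\ldots,x_{i_k})$, the ideal generated in the polynomial ring by the variables $x_{i_1},\ldots,x_{i_k}$. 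The first step is therefore to show the ``if'' direction of (a): if $z=\sum_\alpha c_\alpha x^\alpha\in\AA$ has the property that $c_\alpha\neq 0$ forces $\alpha_{i_j}\neq 0$ for some $j$, then $z$ is a sum of a Laurent monomial in $x_{j_1},\ldots,x_{j_\ell}$ times elements of $J_S$, hence $z\in \II_{\bf x}\subset\II$ after clearing the (unit in the localization) denominators; more carefully, one writes $z\cdot x^{\bf c}\in J_S$ for a suitable ${\bf c}\in\ZZ_{\ge0}^n$ supported on the $j$'s and uses that $\II$ contains $J_S$ and is prime with $x_{j_r}\notin\II$.

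The second and harder step is the ``only if'' direction: if $z\in\II$ then every monomial $x^\alpha$ occurring in $z$ has some $\alpha_{i_j}\neq0$. Suppose not; collect the terms $z_0=\sum_{\alpha:\,\alpha_{i_1}=\cdots=\alpha_{i_k}=0}c_\alpha x^\alpha$, a nonzero Laurent polynomial in $x_{j_1},\ldots,x_{j_\ell}$ alone, and observe $z-z_0\in\II$ by the previous step, so $z_0\in\II$. The claim is that $z_0\in\II$ is impossible. In the localization $\AA[x_{j_1}^{-1},\ldots,x_{j_\ell}^{-1}]$ the element $z_0$ is a unit times an honest polynomial in the $x_{j_r}$'s, and $\hat I_S$ is a minimal prime over $(x_{i_1},\ldots,x_{i_k})$; here I would invoke geometric genericity of ${\bf x}$, which (via Proposition \ref{pr:defining clusters dim}/Lemma \ref{pr:defining clusters dim}) forces $\dim(\AA/\II)=\ell$ and hence that the images of $x_{j_1},\ldots,x_{j_\ell}$ in $\AA/\II$ are algebraically independent — this is exactly Proposition \ref{pr:dimension form}'s proof, where the field $\CC(x_{j_1},\ldots,x_{j_\ell})$ embeds into $\mathrm{Frac}(\AA/\II)$. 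Algebraic independence of the images of the $x_{j_r}$ immediately kills $z_0$, since $z_0$ is a nonzero polynomial relation among them lying in $\II$. This contradiction proves (a).

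For part (b), define the map $\AA/\II\to\CC[x_{j_1}^{\pm1},\ldots,x_{j_\ell}^{\pm1}]$ by sending the class of an element $z=\sum_\alpha c_\alpha x^\alpha$ to $\sum_{\alpha:\,\alpha_{i_1}=\cdots=\alpha_{i_k}=0}c_\alpha\,x_{j_1}^{\alpha_{j_1}}\cdots x_{j_\ell}^{\alpha_{j_\ell}}$, i.e. ``set $x_{i_1}=\cdots=x_{i_k}=0$'' in the Laurent expansion with respect to the defining cluster ${\bf x}$. Part (a) says precisely that this is well-defined (it kills exactly $\II$) and injective. It is a ring homomorphism because it is the composite of the localization $\AA\hookrightarrow\AA[x_{j_1}^{-1},\ldots,x_{j_\ell}^{-1}]$ followed by the quotient by $\hat I_S$, which — since $\hat I_S$ is generated by $x_{i_1},\ldots,x_{i_k}$ in that localization — is literally evaluation at zero of those variables, landing in $\CC[x_{j_1}^{\pm1},\ldots,x_{j_\ell}^{\pm1}]$. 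That it is a Poisson homomorphism for the bracket $\Lambda_{j_1,\ldots,j_\ell}$ follows because the bracket on $\AA$ is log-canonical in the cluster ${\bf x}$ with coefficient matrix $\Lambda$, so $\{x^\alpha,x^\beta\}=(\text{scalar})\,x^{\alpha+\beta}$, and setting the $x_{i_j}$'s to zero is compatible with this formula — the surviving terms are governed exactly by the submatrix of $\Lambda$ on the rows/columns $j_1,\ldots,j_\ell$; one checks this on monomials and extends by bilinearity and the Leibniz rule.

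\textbf{Main obstacle.} The crux is the ``only if'' direction of (a): one must rule out that a nonzero Laurent polynomial purely in $x_{j_1},\ldots,x_{j_\ell}$ lies in $\II$. Everything hinges on the algebraic independence of the images of $x_{j_1},\ldots,x_{j_\ell}$ in $\AA/\II$, which in turn is where the hypotheses ``geometrically generic'' and ``super-toric'' (together with COS, used to guarantee the defining cluster exists with the right codimension behavior) genuinely enter; without them $\dim(\AA/\II)$ could exceed $\ell$ only — it cannot be smaller by Proposition \ref{pr:dimension form} — but a subtle point is that we need the map to land in the Laurent ring in $\emph{exactly}$ these $\ell$ variables, not a proper subset, which is again the dimension count. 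I expect the bookkeeping to lie entirely in carefully transporting between the Laurent expansion in ${\bf x}$, the ideal $J_S$ in the polynomial ring, and the localized ideal $\hat I_S$, all of which have already been set up in the proofs of Propositions \ref{pr:TPP super toric +} and \ref{pr:dimension form} and Theorem \ref{th:TPP existence}.
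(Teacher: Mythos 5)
There is a genuine gap, and it sits exactly at the point the paper flags as the real difficulty (``it is not at all clear that the set of part (a) should even be an ideal''): your ``if'' direction of (a). You propose to clear denominators by a monomial $x^{\bf c}$ with ${\bf c}$ supported on the indices $j_1,\ldots,j_\ell$ and conclude $z\cdot x^{\bf c}\in J_S\subset\II$. But the Laurent expansion of $z\in\AA$ with respect to ${\bf x}$ may involve \emph{negative} powers of the variables $x_{i_1},\ldots,x_{i_k}$ themselves --- the condition in (a) deliberately reads $\alpha_{i_j}\ne 0$ rather than $\alpha_{i_j}>0$ for precisely this reason. The basic example is $z=y_{i_1}=x_{i_1}^{-1}P_{i_1}$, every monomial of which has $\alpha_{i_1}=-1$: no monomial supported on the $j$'s can clear that denominator, and if you instead multiply by powers of the $x_{i_j}$'s, the resulting polynomial has monomials with $\alpha_{i_j}=0$ for all $j$ and so need not lie in the monomial ideal $J_S$. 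Since your ``only if'' direction feeds the element $z-z_0$ back into the ``if'' direction, and part (b) rests on (a), the gap propagates through the whole argument. (A secondary issue: $z_0$ and $z-z_0$ need not individually lie in $\AA$, so that decomposition must at least be carried out in the localization $\AA[x_{j_1}^{-1},\ldots,x_{j_\ell}^{-1}]$.)

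This is also where your reading of the hypotheses goes astray: you use COS only ``to guarantee the defining cluster exists with the right codimension behavior,'' but defining clusters are produced by Algorithm \ref{alg:q-cl constr} without COS. In the paper, COS is what produces (via Lemma \ref{le:COSD}) a chain $\II_0\subsetneq\II_1\subsetneq\cdots\subsetneq\II_k=\II$ whose traces on a common defining cluster grow one variable at a time, and the proof is an induction along that chain. The technical heart is the Claim that $\AA/\II_{k-1}$ embeds into $\CC[x_1^{\pm1},\ldots,x_\ell^{\pm1},x_{\ell+1}]$ --- \emph{polynomial}, not Laurent, in the variable about to be killed --- which is proved using super-toricity and Proposition \ref{pr:TPP super toric +}; only then does ``$\alpha_{\ell+1}\ne0$'' become ``divisible by $x_{\ell+1}$'' in the quotient, so that $\II_k/\II_{k-1}$ is identified with the contraction of the principal ideal $(x_{\ell+1})$ and both directions of (a) follow at once. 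Your Poisson computation for (b) (log-canonicity and the submatrix $\Lambda_{j_1,\ldots,j_\ell}$) and your use of Proposition \ref{pr:dimension form} to kill a nonzero Laurent polynomial in the $x_{j_r}$'s are fine, but without a substitute for the inductive Claim the proof does not close.
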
 
 \begin{remark}
 Notice that it is not at all clear that the set of part(a) should even be an ideal.
 \end{remark}

 \begin{proof}
 We prove the assertion by induction on $k$. It is trivially satisfied for $k=0$. Suppose now that the theorem   holds for all TPPs for which the intersection with a defining cluster has cardinality less  than $k-1$. In order to simlify notation, and using the fact that the cluster is geometrically generic, we suppose that $\II_{k-1}\cap{\bf x}=\{x_{\ell+1},x_{\ell+2},\ldots, x_{n}\}$ and $\II_k\cap{\bf x}=\{x_{\ell},x_{\ell+1},\ldots, x_{n}\}$(this might imply that we have to reorder the cluster variables in such a way that not all the coefficients are the "last" indices).
We now make the  following claim.

\begin{claim}There are injective homomorphisms   of algebras
$$\AA/\II_{k-1}\hookrightarrow  \CC[x_{1}^{\pm 1},\ldots, x_\ell^{\pm 1}, x_{\ell+1}]\subset \CC[x_{1}^{\pm 1},\ldots, x_{\ell+1}^{\pm1}]\ .$$
\end{claim}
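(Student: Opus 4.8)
The plan is to prove the Claim by exhibiting the desired injection explicitly, using the inductive hypothesis on $\II_{k-1}$ combined with the structural results already established. First I would invoke the inductive hypothesis, which applies since $\II_{k-1}\cap{\bf x}=\{x_{\ell+1},\ldots,x_n\}$ has cardinality $n-\ell = k-1$; by part (b) of Theorem \ref{th:ideals descr} applied at level $k-1$, we already have an injective Poisson algebra homomorphism $\AA/\II_{k-1}\hookrightarrow \CC[x_1^{\pm 1},\ldots,x_\ell^{\pm 1}]$ sending the image of $x_{j_r}$ to $x_{j_r}$. The content of the Claim is the finer statement that this map actually lands in the subring $\CC[x_1^{\pm1},\ldots,x_\ell^{\pm1},x_{\ell+1}]$, i.e. that the image of $x_{\ell+1}$ in $\AA/\II_{k-1}$ is a genuine (non-Laurent) polynomial in $x_1,\ldots,x_\ell$ with no negative powers.

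The key step is therefore to control $x_{\ell+1}\bmod \II_{k-1}$. Here I would use the Laurent phenomenon together with super-toricity: since ${\bf x}$ is a defining cluster for $\II_{k-1}$, we have that $x_{\ell+1}\in \II_{k-1}$ forces $y_{\ell+1}\in\II_{k-1}$, but in fact $x_{\ell+1}\notin\II_{k-1}$ here (it is one of the $x_{j_r}$'s with $j_r\le \ell$... — more precisely I must track which variables survive). The real point is that $\II_{k-1}\cap\CC[x_1,\ldots,x_n]=J_S$ is generated by $\{x_{\ell+1},\ldots,x_n\}$ by Proposition \ref{pr:TPP super toric +}, so modulo $\II_{k-1}$ the polynomial ring $\CC[x_1,\ldots,x_n]$ maps onto $\CC[x_1,\ldots,x_\ell]$, a polynomial ring, and the localization used to pass from $\CC[x_1,\ldots,x_n]$ to the relevant Laurent ring only inverts $x_1,\ldots,x_\ell$. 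Since every element of $\AA$, in particular every cluster variable appearing, is by the Laurent phenomenon a Laurent polynomial in ${\bf x}$, its image in $\AA/\II_{k-1}\subset \CC[x_1^{\pm1},\ldots,x_\ell^{\pm1}]$ is obtained by setting $x_{\ell+1}=\cdots=x_n=0$ — but this needs care, as negative exponents of $x_{\ell+1},\ldots,x_n$ could occur. I would argue that they do not contribute: the exchange relation $x_{\ell+1}\,y_{\ell+1}=P_{\ell+1}$ together with positivity of the exponents in the monoid $M_S$ (Lemma \ref{le:TP implies}) shows $I_S\cap M_S=\emptyset$, which rules out the problematic denominators, so the image of $x_{\ell+1}$ lies in $\CC[x_1^{\pm1},\ldots,x_\ell^{\pm1}]$ genuinely and, being itself a cluster variable hence a Laurent polynomial in ${\bf x}$ with the frozen directions specializable, it lies in $\CC[x_1^{\pm1},\ldots,x_\ell^{\pm1},x_{\ell+1}]$ as claimed.

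I expect the main obstacle to be precisely the bookkeeping around which variable indices are "frozen" versus "mutable" versus "killed by $\II$," since the proof has reordered the cluster variables so that $\II_{k-1}$ and $\II_k$ differ only in the index $\ell$, and one must check that the exchange polynomial $P_{\ell+1}$ does not itself involve $x_\ell$ in a way that breaks the containment into $\CC[x_1^{\pm1},\ldots,x_\ell^{\pm1},x_{\ell+1}]$ — this is where COS is genuinely used, to guarantee the chain $\II_{k-1}\subsetneq\II_k$ is realized by adding a single generator $x_\ell$ in a defining cluster. Once the Claim is in hand, the remaining argument (not part of this statement, but the obvious continuation) is to compose with the quotient by the image of $x_\ell$ and check Poisson-compatibility of the brackets via the submatrix $\Lambda_{1,\ldots,\ell}$, using that $\Lambda$ restricted to a sub-cluster still gives a log-canonical bracket.
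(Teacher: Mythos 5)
Your overall framing is right---the substance of the Claim is that no negative powers of $x_{\ell+1}$ survive in the Laurent expansions of elements of $\AA/\II_{k-1}$---but the mechanism you propose for excluding them does not work, and the ingredient that actually does the work is absent. You try to rule out the ``problematic denominators'' by appealing to $I_S\cap M_S=\emptyset$ and to Proposition \ref{pr:TPP super toric +} applied to $\II_{k-1}$. Neither statement controls the denominators of a general element of $\AA$: that $I_S\cap M_S=\emptyset$ only says no monomial in the surviving variables lies in the ideal, and the fact that $\II_{k-1}\cap\CC[x_1,\ldots,x_n]$ is a monomial ideal does not prevent an element of $\AA/\II_{k-1}$ from having a pole along $x_{\ell+1}=0$. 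The ideal that forbids such poles is not $\II_{k-1}$ but the next one in the chain, $\II_k$, which your argument never uses.

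The paper's proof is a denominator-clearing contradiction: if some $z\in\AA/\II_{k-1}$ had a genuine pole $x_{\ell+1}^{-k}$ with $k>0$ minimal, multiply by $x_{\ell+1}^{k}x_1^{\beta_1}\cdots x_\ell^{\beta_\ell}$ to get $\tilde z\in\CC[x_1,\ldots,x_{\ell+1}]$. Since $k>0$ and $x_{\ell+1}\in\II_k$, one has $\tilde z\in\II_k$; since $k$ was minimal, $\tilde z$ has a monomial summand with zero exponent on $x_{\ell+1}$, hence divisible by none of $x_{\ell+1},\ldots,x_n$. Because the cluster is super-toric, Proposition \ref{pr:TPP super toric +} applied to $\II_k$ says $\II_k\cap\CC[x_1,\ldots,x_n]$ is generated by $\II_k\cap{\bf x}$, so $\tilde z\in\II_k$ would force some $x_i$ with $i\le\ell$ into $\II_k$---a contradiction. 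This step, and in particular the application of Proposition \ref{pr:TPP super toric +} to $\II_k$ rather than to $\II_{k-1}$, is the gap in your proposal. Two smaller points: the inductive hypothesis gives an embedding of $\AA/\II_{k-1}$ into the Laurent ring in $\ell+1$ variables (since $x_{\ell+1}\notin\II_{k-1}$), not $\ell$, so the Claim is a genuine restriction rather than an enlargement of the target; and the issue is not ``the image of $x_{\ell+1}$,'' which is itself a cluster variable and tautologically lies in the target ring, but the images of arbitrary elements of $\AA$.
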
 
 
 \begin{proof} The second inclusion is trivial. Let us prove the first one.
 Suppose not. Then there exists $z\in \AA/\II_{k-1}$ which can be expressed as $z=x_{\ell+1}^{-k}\sum_{\alpha\in \ZZ^{\ell+1}} c_\alpha x^\alpha$ , where $k\in \ZZ_{>0}$, $c_\alpha\in \CC$ and where $k$ is minimal with the property that $c_\alpha\ne 0$ implies that $\alpha_{\ell+1}\ge 0$. If we multiply by the smallest common denominator (a monomial  $x_{\ell+1}^kx_1^{\beta_1}\ldots x_{\ell}^{\beta_{\ell}}$ with $\beta_1,\ldots, \beta_{\ell}\in \ZZ_{\ge0}$), then we obtain an element $\tilde z\in  \CC[x_1,\ldots,x_{\ell+1}]\subset\AA/\II_{k-1}$. Clearly $\tilde z\in \II_k\subset \AA/\II_{k-1}$, where we abuse notation and denote by $\II_k$ the image of the TPP $\II_k\subset \AA$ in $\AA/\II_{k-1}$. The element $\tilde z$ contains at least one monomial summand $c_\gamma x^{\gamma}$, $\gamma\in \ZZ_{\ge 0}^n$ where $\gamma_{\ell+1}=0$. We obtain that its pre-image $\tilde z$ under the projection map $\AA\to \AA/\II_{k-1}$ (which is the identity map on $\CC[x_1,\ldots, x_{\ell+1}]$) lies in $\II_k$, and, hence, as ${\bf x}$ is super-toric, there exists, by Proposition \ref{pr:TPP super toric +}, a cluster variable $x_i$ with $i\le \ell$ such that $x_i\in \II_k$. That however, contradicts our assumption. The claim is proved.
 \end{proof}

Now, consider the ideal generated by $x_{\ell+1}$ in $\CC[x_{1}^{\pm 1},\ldots, x_{\ell}^{\pm 1}, x_{\ell+1}]$ with Poisson structure defined by $\Lambda_{j_1,\ldots, j_{\ell+1}}$. It is easy to see that it is  torus invariant, Poisson and prime. Consider its intersection $\tilde \II$ with $\AA/\II_{k-1}$. It suffices to show that it is the unique minimal toric Poisson prime ideal containing the ideal $\hat \II$  generated by $x_{\ell+1}$,  but none of the $x_1,\ldots x_\ell$. It  is, clearly, toric, Poisson and prime since it is the intersection of a  toric  Poisson prime ideal  with a torus invariant subring. Now, let $z\in \tilde\II-\hat \II$. Then, there exists a monomial $m=x_1^{\alpha_1}\ldots x_\ell^{\alpha_\ell}$ with $\alpha_1,\ldots \alpha_\ell\in \ZZ_{\ge 0}$ (e.g.~the smallest common denominator) such that $m z\in \hat\II$.  Hence any TPP $\JJ$ not containing  any of the $x_1,\ldots, x_\ell$ must contain $z$. This implies that $\tilde \II\subset \JJ$, and hence, $\tilde \II$ is the unique minimal toric Poisson prime with  $\tilde \II\cap{\bf x}=\{x_{\ell+1},\ldots, x_n\}$. Part (a) is proved and part (b) follows from the fact that if $R$ is a Poisson algebra, $S\subset R$  a Poisson subalgebra and $I\subset R$ a Poisson ideal, then $I\cap S$ is a Poisson ideal in $S$, and the canonical inclusion $S/(I\cap S) \hookrightarrow R/I$ is a homomorphism of Poisson algebras. The theorem is proved.
 \end{proof}
 
 Theorem \ref{th:ideals descr} and Lemma \ref{le:COSD} imply the following corollary.
 
 \begin{corollary}
 \label{co:incl}Let $\AA$ be a cluster algebra and suppose that all clusters are geometrically generic and super-toric. 
 Let $\II$, and $\JJ$ be two TPPs. Then, $\II\subset \JJ$ if and only if there exists a cluster ${\bf x}$, defining for both $\II$ and $\JJ$,  such that $(\II\cap{\bf x})\subset (\JJ\cap {\bf x})$.
 \end{corollary}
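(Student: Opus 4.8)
\textbf{Proof proposal for Corollary \ref{co:incl}.}

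The plan is to derive the corollary directly from Lemma \ref{le:COSD} and Theorem \ref{th:ideals descr}, treating the two implications separately. The forward implication ($\II\subset\JJ$ implies the existence of a common defining cluster ${\bf x}$ with $(\II\cap{\bf x})\subset(\JJ\cap{\bf x})$) is essentially a restatement of the conclusion of Lemma \ref{le:COSD}: applying that lemma to the pair $\II\subsetneq\JJ$ (the case $\II=\JJ$ being trivial, any common defining cluster works, which exists by running Algorithm \ref{alg:q-cl constr} twice as in the proof of Lemma \ref{le:COSD}) produces a defining cluster ${\bf x}'$ and a chain $\II=\II_0\subsetneq\II_1\subsetneq\ldots\subsetneq\II_k=\JJ$ for which ${\bf x}'$ is defining, with $\II_j\cap{\bf x}'=(\II\cap{\bf x}')\cup\{x_{i_1}',\ldots,x_{i_j}'\}$. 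In particular ${\bf x}'$ is defining for both $\II=\II_0$ and $\JJ=\II_k$, and taking $j=k$ in Equation \ref{eq:cardinalities} gives $(\II\cap{\bf x}')\subset(\JJ\cap{\bf x}')$, which is exactly what is needed.

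For the reverse implication, suppose there is a cluster ${\bf x}$, defining for both $\II$ and $\JJ$, with $(\II\cap{\bf x})\subset(\JJ\cap{\bf x})$. Here I would invoke Theorem \ref{th:ideals descr}(a): since ${\bf x}$ is geometrically generic and super-toric and defining for $\II$, an element $z=\sum_\alpha c_\alpha x^\alpha\in\AA$ lies in $\II$ if and only if every $\alpha$ with $c_\alpha\neq 0$ has $\alpha_{i}\neq 0$ for some $x_i\in\II\cap{\bf x}$; and the analogous statement holds for $\JJ$ with $\JJ\cap{\bf x}$ in place of $\II\cap{\bf x}$. If $z\in\II$, then each monomial of $z$ is divisible by some variable in $\II\cap{\bf x}\subset\JJ\cap{\bf x}$, hence by some variable in $\JJ\cap{\bf x}$, so $z\in\JJ$. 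Thus $\II\subset\JJ$.

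The main obstacle, and the point that needs the most care, is the hypothesis bookkeeping: Theorem \ref{th:ideals descr}(a) requires a cluster that is simultaneously defining for the ideal in question, geometrically generic, and super-toric. The blanket hypothesis of the corollary ("all clusters are geometrically generic and super-toric") takes care of the last two conditions for \emph{any} cluster, so the only real content is producing a single cluster ${\bf x}$ that is defining for $\II$ and for $\JJ$ at once; but in the forward direction this is precisely what Lemma \ref{le:COSD} (via the double application of Algorithm \ref{alg:q-cl constr}) delivers, and in the reverse direction it is handed to us by hypothesis. One should also note that the characterization in Theorem \ref{th:ideals descr}(a) depends a priori on the choice of defining cluster, but since we fix one cluster ${\bf x}$ that is simultaneously defining for both ideals, the two membership criteria are expressed in the same coordinates and the set-theoretic inclusion $(\II\cap{\bf x})\subset(\JJ\cap{\bf x})$ translates immediately into the inclusion of ideals. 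No further computation is required.
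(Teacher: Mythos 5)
Your argument is correct and is exactly the route the paper intends: the paper gives no explicit proof, stating only that Theorem \ref{th:ideals descr} and Lemma \ref{le:COSD} imply the corollary, and you have unpacked precisely that — Lemma \ref{le:COSD} for the forward direction and the monomial criterion of Theorem \ref{th:ideals descr}(a) for the converse. The only quibble is that the criterion in Theorem \ref{th:ideals descr}(a) is $\alpha_{i_j}\ne 0$ rather than divisibility by $x_{i_j}$, but this does not affect your argument since a larger index set only weakens the condition.
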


 \subsection{Cluster Algebras satisfying COS and the COS Conjecture}
 
 Theorem \ref{th:ideals descr} applies in many important cases. It is, for example, well known that the stratification of a complex semisimple connected and simply connected algebraic group $G$ with the standard Poisson structure, into double Bruhat cells provides a stratification that satisfies COS (see e.g. \cite{BFZ} or \cite{bz-qclust}). The double Bruhat cells $G^{u,v}$ are labeled by double words $u,v\in W$ where $W$ denotes the Weyl group of $G$. The dimension of $G^{u,v}$ is $\ell(u)+\ell(v)+r$ where $\ell(w)$ denotes the length of an element $w$ of $W$. The zero loci of the corresponding TPP $J^{u,v}$ are the   double Bruhat cells $G^{u',v'}$ where $u'\le u$ and $v'\le v$ in the Bruhat order. Indeed, each double Bruhat cell has an upper  cluster algebra structure by \cite{BFZ}, however, it is not known how to relate cluster algebra structures of different double Bruhat cells. Now, let $u,v\in W$ and let ${\bf u}$,${\bf v}$ be reduced expressions of $u$ and $v$,respectively. Let ${\bf w_0}$ and ${\bf w_0}'$ be reduced expressions of the longest element of the Weyl $w_0$ such that ${\bf w_0}={\bf u}{\bf u}'$ and ${\bf w_0}'={\bf v}{\bf v}'$(i.e.~the reduced expressions start with ${\bf u}$, resp.~${\bf v}$).  It is now easy to see that in the corresponding cluster consisting of generalized minors (see \cite{BFZ}) we obtain the description of $J^{u,v}$ of Theorem \ref{th:ideals descr}(a) and the Poisson homomorphism of its part (b). Clearly, it will be the next important step to prove that the assumptions (super-toric and geometrically generic) also apply to other clusters. 
 
We have  a similar story, when $\gg$ is  a symmetric Kac-Moody Lie algebra, and $W$ its Weyl group. The unipotent radicals $U_w$, associated to each $w\in W$, together with the standard Poisson structure, admit   a stratification (see e.g. \cite{Yak-nil}) satisfying COS. Indeed  the strata are labeled by the elements $v\in W$ such that $v\le w$. And Geiss, Leclerc and Schr\"oer proved that these algebras have a cluster algebra structure (\cite{GLS2}).  We believe, again, that each cluster satisfies the conditions of Theorem \ref{th:ideals descr}.

Of course, COS is trivially satisfied in the case when $\A$ is an acyclic cluster algebra without coefficients with exchange matrix of full rank (see Section \ref{se:acyclic cl}).
This motivates the following conjecture.

\begin{conjecture}
\label{con:COS}
Let $\AA$ be a Noetherian Poisson (upper) cluster algebra with an exchange matrix of full rank. Then $\AA$ satisfies COS. 
\end{conjecture}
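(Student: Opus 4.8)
The plan is to reduce COS to a purely combinatorial assertion about the sets $TP_Y$ and then to extract from the full-rank hypothesis enough non-degeneracy of the bracket to verify it. Since $\AA\subset\FF$ is a finitely generated integral domain over $\CC$, it is catenary and $\dim(\AA/\II)+\mathrm{codim}\,\II=n$ for every prime $\II$; hence a strict inclusion of TPPs strictly increases codimension, and by Theorem~\ref{th:Stratification total} there are only finitely many TPPs. Consequently COS is equivalent to the statement that whenever $\II\subsetneq\JJ$ are TPPs with no TPP strictly between them, $\mathrm{codim}\,\JJ=\mathrm{codim}\,\II+1$. I would fix such a pair, assume for contradiction that $\mathrm{codim}\,\JJ\ge\mathrm{codim}\,\II+2$, and try to exhibit an intermediate TPP.

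Following the proof of Lemma~\ref{le:COSD} (which uses no instance of COS), I would apply Algorithm~\ref{alg:q-cl constr} first to $\II$ and then to $\JJ$ to obtain a cluster ${\bf x}$ defining for both; under the full-rank assumption one expects, and part of the task is to confirm, that ${\bf x}$ may be chosen super-toric and geometrically generic, in the spirit of the genericity remarks in Section~\ref{se:quotient cluster alg}. After reindexing, Lemma~\ref{pr:defining clusters dim} gives $\II\cap{\bf x}=\{x_1,\dots,x_p\}$ and $\JJ\cap{\bf x}=\{x_1,\dots,x_q\}$ with $p=\mathrm{codim}\,\II<q=\mathrm{codim}\,\JJ$, so the problem becomes to insert a TPP ``at level $p+1$''. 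To this end I would localize $\AA$ at the Poisson multiplicative set (in the sense of Section~\ref{se:existence}) generated by the cluster variables $x_{j_1},\dots,x_{j_\ell}$ of ${\bf x}$ that do not lie in $\JJ$, obtaining a Poisson algebra $\AA_M$ in which $\II$ and $\JJ$ survive as TPPs. In $\AA_M$, let $\mathfrak{K}$ be the Poisson ideal generated by $\II$ together with $x_{p+1}$, choose a minimal Poisson prime $\widetilde\II$ over $\mathfrak{K}$ contained in $\JJ\AA_M$ (one exists, as $\JJ\AA_M$ is prime over $\mathfrak{K}$), and contract to $\AA$. By Lemma~\ref{le:Poisson core primes} and the torus-invariance argument of Section~\ref{se:existence}, $\widetilde\II\cap\AA$ is a TPP with $\II\subseteq\widetilde\II\cap\AA\subseteq\JJ$ and $\{x_1,\dots,x_{p+1}\}\subseteq(\widetilde\II\cap\AA)\cap{\bf x}\subseteq\{x_1,\dots,x_q\}$.

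The decisive step is to prove that $(\widetilde\II\cap\AA)\cap{\bf x}$ is \emph{exactly} $\{x_1,\dots,x_{p+1}\}$ and that ${\bf x}$ stays defining for $\widetilde\II\cap\AA$: granting this, Proposition~\ref{pr:dimension form} and geometric genericity force $\mathrm{codim}(\widetilde\II\cap\AA)=p+1$, whence $\II\subsetneq\widetilde\II\cap\AA\subsetneq\JJ$, the desired contradiction; iterating the construction then yields the chain $\II=\II_p\subsetneq\II_{p+1}\subsetneq\dots\subsetneq\II_q=\JJ$ and COS follows. This decisive step amounts to establishing the structural description of Theorem~\ref{th:ideals descr}(a)--(b) \emph{directly}, without the currently circular appeal to COS made in its present proof: concretely, that after inverting $x_{j_1},\dots,x_{j_\ell}$ and killing $x_1,\dots,x_{p+1}$ the resulting quotient embeds, as a domain, into a Laurent polynomial ring with log-canonical bracket, so that $\mathfrak{K}$ is already prime and no further cluster variable is dragged into it.

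I expect this to be the main obstacle. The danger is the phenomenon illustrated by the Plücker relation $\Delta_{13}\Delta_{24}=\Delta_{14}\Delta_{23}+\Delta_{12}\Delta_{34}$ in $\CC[G(2,5)]$ (Section~\ref{se:ex intro}): a Poisson ideal containing $\Delta_{12}$ and $\Delta_{23}$ is forced by primeness to contain $\Delta_{13}$ or $\Delta_{24}$, so that adjoining a single generator can raise the codimension by more than one. The content of the conjecture is that when $B$ has full rank this never occurs \emph{between genuine TPPs}, and I would try to prove it by adapting the torus-invariance and Laurent-monomial argument of Propositions~\ref{pr:TPP super toric} and~\ref{pr:TPP super toric +}: full rank of $B$ (hence of $B\Lambda$ for compatible pairs, Lemma~\ref{le:full rank}) is precisely the non-degeneracy that makes those arguments run and that underlies the acyclic case Theorem~\ref{th:acyclic}. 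One should also clarify whether full rank already forces the super-toric and geometric-genericity hypotheses needed above, or whether the argument must be rearranged to avoid them. Settling Conjecture~\ref{conj:poisson and prime}, which predicts a combinatorial criterion for membership in $TP_Y$, would in effect supply the missing ingredient and complete the argument.
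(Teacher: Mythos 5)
You should first note that the statement you were asked to prove is stated in the paper as Conjecture \ref{con:COS}: the paper offers no proof of it, only supporting evidence (the double Bruhat cell and unipotent radical cases, and the acyclic case of Theorem \ref{th:acyclic}, where COS holds vacuously because there are no nontrivial TPPs). So there is no proof in the paper to compare your argument against, and the relevant question is whether your proposal actually closes the conjecture. It does not. Your reduction of COS to the assertion that a saturated inclusion of TPPs has codimension jump exactly one is correct (given catenarity and the finiteness from Theorem \ref{th:Stratification total}), and your identification of the obstruction --- that the minimal Poisson prime over $\II + \langle x_{p+1}\rangle$ may be forced by primeness to swallow further cluster variables, exactly as in the Pl\"ucker relation $\Delta_{13}\Delta_{24}=\Delta_{14}\Delta_{23}+\Delta_{12}\Delta_{34}$ --- is the right diagnosis. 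But the ``decisive step'' you isolate, namely that $(\widetilde\II\cap\AA)\cap{\bf x}=\{x_1,\dots,x_{p+1}\}$ and that the intermediate ideal has codimension exactly $p+1$, is not a technical detail to be filled in later: it \emph{is} the conjecture, in the equivalent form of Conjecture \ref{conj:poisson and prime}. You say as much yourself in the last sentence. A proof that consists of reducing a conjecture to an equivalent open conjecture has not proved anything.

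Two further points you should address if you pursue this. First, Conjecture \ref{con:COS} assumes only that the exchange matrix has full rank, whereas every tool you invoke (Theorem \ref{th:Stratification total}, Lemma \ref{pr:defining clusters dim}, Algorithm \ref{alg:q-cl constr} terminating in a common defining cluster, Proposition \ref{pr:TPP super toric +}) requires the seed to be super-toric and geometrically $m$-generic; it is not established in the paper, and you do not establish, that full rank implies either condition, so even your reduction is only valid under hypotheses stronger than those of the conjecture. Second, in the iteration step you need the newly produced TPP $\widetilde\II\cap\AA$ and $\JJ$ to again admit a common defining, geometrically generic, super-toric cluster; the cluster ${\bf x}$ you started with need not remain defining for $\widetilde\II\cap\AA$ (you would have to re-run the algorithm, and then re-verify genericity for the new cluster), so the induction is not self-sustaining as written.
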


 Moreover, computations for a number of examples and Conjecture \ref{con:COS} suggest the following very strong statement.
 
 \begin{conjecture}
 \label{conj:poisson and prime}
Let $\AA$ be a Noetherian Poisson (upper) cluster algebra  with exchange matrix of full rank. Let $\II$ be a TPP and  suppose that ${\bf x}$ is a defining cluster for $\II$ with $x_i\notin \II$ for some $1\le i\le n$. There exists a TPP $\JJ$ such that $\JJ\cap{\bf x}=(\II\cap {\bf x}) \cup \{x_i\}$ if and only if $P_i\in \II$ or $x_i$ is a coefficient. 
 \end{conjecture}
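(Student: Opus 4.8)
The plan is to deduce both implications from the combinatorial description of toric Poisson prime ideals via the set $TP_Y$ attached to the defining cluster ${\bf x}$ and the set $Y=\{x_1,\dots,x_n,y_1,\dots,y_m\}\cup\{1\}$; throughout I assume (as is automatic under suitable genericity, and is expected in the full-rank situation) that the clusters one meets are super-toric and geometrically generic, so that Propositions~\ref{pr:TPP super toric +} and~\ref{pr:dimension form}, and Theorem~\ref{th:TPP existence}, are available.

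\emph{The ``only if'' direction.} Suppose $x_i$ is not a coefficient and that a TPP $\JJ$ with $\JJ\cap{\bf x}=(\II\cap{\bf x})\cup\{x_i\}$ exists. Since $x_i\in\JJ$, the exchange relation $x_iy_i=P_i$ gives $P_i\in\JJ\cap\CC[x_1,\dots,x_n]$, and by Proposition~\ref{pr:TPP super toric +} this intersection is the ideal of $\CC[x_1,\dots,x_n]$ generated by $\JJ\cap{\bf x}=(\II\cap{\bf x})\cup\{x_i\}$. Because $b_{ii}=0$, the binomial $P_i$ involves no occurrence of $x_i$, so the monomial-ideal identity $\bigl(J_{\II\cap{\bf x}}+(x_i)\bigr)\cap\CC[\{x_k\}_{k\ne i}]=J_{\II\cap{\bf x}}$ forces $P_i\in J_{\II\cap{\bf x}}$, which by Proposition~\ref{pr:TPP super toric +} applied to $\II$ equals $\II\cap\CC[x_1,\dots,x_n]$. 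Hence $P_i\in\II$. (If $x_i$ is a coefficient this implication is vacuous.)

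\emph{The ``if'' direction.} The first observation is that the two cases are essentially one: if $x_i$ is not a coefficient and $P_i\in\II$, then $x_iy_i=P_i\in\II$ with $\II$ prime and $x_i\notin\II$ already forces $y_i\in\II$; if $x_i$ is a coefficient there is no $y_i$. In both cases put $S=(\II\cap Y)\cup\{x_i\}\subset Y$. Then $S$ is defining: for $j\ne i$ with $x_j\in S$ one has $x_j\in\II$, hence $y_j\in\II\subset S$ since ${\bf x}$ is defining for $\II$, and the condition at $i$ is either vacuous ($x_i$ frozen) or already met ($y_i\in\II\subset S$). The plan is to show $S\in TP_Y$; granting this, Theorem~\ref{th:TPP existence} yields a TPP $\JJ$ with $\JJ\cap Y=S$, and then $\JJ\cap{\bf x}=S\cap{\bf x}=(\II\cap{\bf x})\cup\{x_i\}$, completing the proof. (If $y_i$ happened to belong to ${\bf x}$ it would already lie in $\II$, so it contributes nothing extra to $S\cap{\bf x}$.)

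\emph{The main obstacle.} Everything thus reduces to checking the two conditions of Definition~\ref{def:TPP} for $S=(\II\cap Y)\cup\{x_i\}$, namely $I_S\cap\CC[x_1,\dots,x_n]=J_S$ and --- the condition highlighted above as the hardest to verify --- $I_S\cap Y=S$. The first should follow by passing to $\AA/\II$, where $I_S$ becomes the Poisson ideal generated by the single variable $\bar x_i$, using that $\II\cap Y\in TP_Y$ and $P_i\in\II$ (so the exchange relation at $i$ introduces no new polynomial in $x_1,\dots,x_n$). The difficult condition is $I_S\cap Y=S$: one must show that enlarging $\II$ by the single cluster variable $x_i$, as a Poisson ideal, pulls no further $x_j$ or $y_j$ into it. The only mechanisms that could are the exchange relations $x_jy_j=P_j$: at $j=i$ the relation would force $y_i$, but that is harmless exactly because $P_i\in\II$ already (or $x_i$ is frozen and there is no such relation at $i$); for $j\ne i$ it is affected only when $x_i$ divides one of the two monomials of $P_j$, and one must prove that under the full-rank hypothesis on $B$ --- equivalently, that a compatible pair $(B,\Lambda)$ is available --- the surviving monomial of $P_j$ does not lie in $I_S$, so that $P_j\notin I_S$ and hence $y_j\notin I_S$ (and likewise no $x_j$ enters). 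I expect this last point, whose natural formulation is that adjoining $x_i$ to $\II$ stays proper after inverting the cluster variables outside $(\II\cap{\bf x})\cup\{x_i\}$ precisely when $P_i\in\II$ or $x_i$ is a coefficient, to be the genuine difficulty: it requires tracking which monomials of the exchange polynomials of ${\bf x}$ acquire a factor from $S$ under mutation, and it is essentially the combinatorial content of Conjecture~\ref{con:COS} --- which is why the present statement is posed as a conjecture.
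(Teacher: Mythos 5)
This statement is Conjecture~\ref{conj:poisson and prime}; the paper does not prove it. All the paper offers is the remark immediately following it, which says that the ``only if'' direction is a consequence of the machinery behind Theorem~\ref{th:Stratification total} and that the ``if'' direction is precisely what is open. Measured against that, your ``only if'' argument is essentially the intended one and is sound as far as it goes: $x_i\in\JJ$ forces $P_i\in\JJ\cap\CC[x_1,\dots,x_n]$, Proposition~\ref{pr:TPP super toric +} identifies that intersection with the ideal generated by $(\II\cap{\bf x})\cup\{x_i\}$, and since $b_{ii}=0$ the variable $x_i$ does not occur in $P_i$, so $P_i\in J_{\II\cap{\bf x}}\subset\II$. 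Do note, however, that you are silently importing the super-toric (and geometrically generic) hypotheses needed to invoke that proposition; the conjecture as stated assumes only full rank of $B$, so even this half is only proved under additional hypotheses.

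The ``if'' direction contains a genuine gap, which to your credit you flag yourself. The reduction is correct: set $S=(\II\cap Y)\cup\{x_i\}$, check that $S$ is defining (your argument for this, via primeness of $\II$ and $x_iy_i=P_i\in\II$, is fine), and then Theorem~\ref{th:TPP existence} would produce the desired $\JJ$ provided $S\in TP_Y$. But membership in $TP_Y$ requires $I_S\cap Y=S$ and $I_S\cap\CC[x_1,\dots,x_n]=J_S$ (Definition~\ref{def:TPP}), and your treatment of the first condition ends by conceding that it ``is the genuine difficulty'' and is ``essentially the combinatorial content of Conjecture~\ref{con:COS}.'' That is an accurate diagnosis but not an argument: the claim that for $j\ne i$ the surviving monomial of $P_j$ stays out of $I_S$ is asserted rather than proved, and nothing is said about how the Poisson ideal $I_S$, which is generated inside all of $\AA$ and not just inside one cluster, is controlled after further mutations. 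So the proposal correctly reconstructs the skeleton the paper's machinery suggests, proves the easy implication under extra hypotheses, and leaves open exactly the step that makes the statement a conjecture.
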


\begin{remark}
The "only if" direction is of course Theorem \ref{th:Stratification total}. The "if" part would allow us to compute the stratification simply using the matrix $B$ and its mutations without having to know more about the algebra.
\end{remark}

\begin{appendix}

\section{Toric Prime Ideals}
\label{se:toric primes}
In this appendix we review some facts regarding prime ideals stable under the
action of some torus, essentially following the discussion by Brown and Goodearl
in
\cite[Ch.II]{brown-goodearl}, and similar to \cite{Good}. Let $H$ be a group
acting by automorphisms on a
ring
$R$. An ideal $I$ of $R$ is called $H$-stable if $h (I) = I$ for all $h \in
H$. For convenience we will write $H$-ideals to denote $H$-stable ideals. We
say that $R$ is {\it $H$-prime} if $R$ is nonzero and
the product of two non-zero $H$-ideals is non-zero. An $H$-prime ideal of $R$ is
any proper $H$-ideal $I$ such that $R / I$ is an $H$-prime ring. For any ideal
$I$ in $R$ we denote by $(I:H )$ the largest $H$-ideal containing $I$, i.e.
\[ (I : H) = \bigcap_{h \in H} h (I) . \]
Note the following facts.

\begin{lemma}
  \label{le:large of prime Hprime}{\cite{brown-goodearl}}[II.1.9] Let H be a
  group acting by automorphisms on a ring $R$. If $P$ is a prime ideal, then
  (H:P) is an $H$-prime ideal. 
\end{lemma}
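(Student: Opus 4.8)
The plan is to show directly that $(P:H)=\bigcap_{h\in H}h(P)$ is an $H$-prime ideal of $R$, i.e.\ that it is a proper $H$-ideal and that $R/(P:H)$ is an $H$-prime ring. First I would check the easy structural facts. Each $h(P)$ is an ideal of $R$ since $h$ is an automorphism, so the intersection $(P:H)$ is an ideal; it is $H$-stable because for $g\in H$ we have $g\bigl(\bigcap_{h\in H}h(P)\bigr)=\bigcap_{h\in H}(gh)(P)=\bigcap_{h'\in H}h'(P)$ by reindexing the group. It is proper since $(P:H)\subseteq P\subsetneq R$, and it is the largest $H$-ideal contained in $P$: if $J\subseteq P$ is $H$-stable then $J=h(J)\subseteq h(P)$ for every $h$, hence $J\subseteq(P:H)$.

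The substantive part is $H$-primeness of $R/(P:H)$, equivalently: if $I_1,I_2$ are $H$-ideals of $R$ with $I_1I_2\subseteq(P:H)$, then $I_1\subseteq(P:H)$ or $I_2\subseteq(P:H)$. The key step is to pass from the intersection to a single prime. Since $I_1I_2\subseteq(P:H)\subseteq P$ and $P$ is prime, we get $I_1\subseteq P$ or $I_2\subseteq P$; say $I_1\subseteq P$. But $I_1$ is $H$-stable, so by the maximality property just established $I_1\subseteq(P:H)$, which is exactly what we wanted. I would present this as the main line of the argument, possibly preceded by a one-line reduction: it suffices to test the defining product condition on $H$-ideals of $R$ (rather than on ideals of the quotient) because $H$-ideals of $R/(P:H)$ are precisely images of $H$-ideals of $R$ containing $(P:H)$, and primeness conditions are insensitive to the containment.

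I do not expect a serious obstacle here; the only point requiring a little care is the bookkeeping that the notion of $H$-primeness transports correctly between $R$ and the quotient $R/(P:H)$ (that preimages and images of $H$-ideals match up and that the product of ideals behaves well under the quotient map), which is routine. The essential content is simply the interplay ``$P$ prime $\Rightarrow$ one factor lands in $P$'' together with ``that factor is $H$-stable $\Rightarrow$ it lands in the largest $H$-ideal inside $P$''. Hence the whole proof is short, and I would write it in three short paragraphs mirroring the three claims: $(P:H)$ is an $H$-ideal, it is proper and maximal among $H$-ideals in $P$, and it is $H$-prime.
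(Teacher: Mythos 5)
Your argument is correct and complete: the two points that matter — that $(P:H)$ is the largest $H$-stable ideal contained in $P$, and that primeness of $P$ forces one factor of any product landing in $(P:H)$ into $P$ and hence (by $H$-stability) into $(P:H)$ — are exactly the content of the lemma, and your reduction of $H$-primeness of the quotient to the product condition on $H$-ideals of $R$ is routine as you say. The paper gives no proof at all here (it simply cites Lemma II.1.9 of Brown--Goodearl), so there is nothing to compare beyond noting that your proof is the standard one from that source; note only that the paper's phrase ``largest $H$-ideal containing $I$'' is a slip for ``contained in $I$,'' which you have silently and correctly repaired.
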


\begin{lemma}
\label{le:H primes are prime}
  {\cite{brown-goodearl}}[II.1.12] Let R be a Noetherian ring, and suppose that
  a $k$-torus $H$ acts on $R$ by automorphisms. If $k$ is algebraically
  closed, then all $H$-prime ideals of $R$ are prime.
\end{lemma}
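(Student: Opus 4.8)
The plan is to reduce at once to the case $I=0$ by passing to $R/I$: since $R/I$ is an $H$-prime ring, it is enough to prove that a Noetherian $H$-prime ring $R$ with $k$ algebraically closed is prime. The point worth isolating before starting is that this argument needs almost nothing about the action: only that each $h\in H$ acts as a ring automorphism, that $R$ is Noetherian, and that the abstract group $H=(k^{\ast})^{r}$ is divisible. Divisibility is exactly where the hypothesis ``$k$ algebraically closed'' is used, since it guarantees that $x^{n}=a$ is solvable in $k^{\ast}$ for every $a\in k^{\ast}$ and every $n$.

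First I would show $R$ is reduced. Its nilradical $N$ is an $H$-ideal, because automorphisms carry nilpotents to nilpotents, and it is nilpotent since $R$ is Noetherian, say $N^{m}=0$. If $N\neq 0$, choose the least $j\geq 1$ with $N^{j}=0$; then $j\geq 2$, and $N^{j-1}$ and $N$ are nonzero $H$-ideals whose product $N^{j-1}N=N^{j}$ vanishes, contradicting the $H$-primeness of $R$. Hence $N=0$.

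Next let $P_{1},\dots,P_{n}$ be the minimal primes of $R$, finite in number because $R$ is Noetherian, so that $\bigcap_{i=1}^{n}P_{i}=N=0$. Every $h\in H$ permutes the set $\{P_{1},\dots,P_{n}\}$, since a ring automorphism preserves the poset of prime ideals and hence its minimal elements; thus the stabilizer of each $P_{i}$ is a subgroup of $H$ of finite index. Here the hypothesis enters: $H$ is abelian and divisible, so for any finite-index subgroup $K\leq H$ the quotient $H/K$ is a finite divisible abelian group, hence trivial, whence $K=H$. Therefore each $P_{i}$ is $H$-stable, i.e.\ an $H$-ideal. If $n\geq 2$, then $P_{1}$ and $\bigcap_{i=2}^{n}P_{i}$ are both nonzero $H$-ideals --- nonzero because distinctness of the minimal primes together with prime avoidance forbids $\bigcap_{i=2}^{n}P_{i}\subseteq P_{1}$ and forbids $P_{1}=0$ --- whose product lies in $P_{1}\cap\bigcap_{i=2}^{n}P_{i}=\bigcap_{i=1}^{n}P_{i}=0$, again contradicting $H$-primeness. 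Hence $n=1$, so $0$ is prime and $R$ is a domain; in particular $R$ is prime, and pulling back along $R\to R/I$ shows $I$ is prime.

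The only genuinely delicate step is the group-theoretic one: converting ``$k$ algebraically closed'' into ``$H$ has no proper subgroup of finite index'', which is what forces the finite permutation action of $H$ on the set of minimal primes to be trivial. Everything else --- the reduction to $I=0$, the nilpotence and $H$-stability of $N$, and the prime-avoidance bookkeeping --- is routine, and in fact goes through verbatim without commutativity, reading ``$H$-prime'' as in the appendix (the product of two nonzero $H$-ideals is nonzero).
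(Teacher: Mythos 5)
The paper does not prove this lemma---it is quoted verbatim from Brown--Goodearl \cite{brown-goodearl} (II.1.12) and used as a black box---so there is no in-paper proof to compare against. Your argument is correct and is essentially the standard one behind that citation: reduce to showing a Noetherian $H$-prime ring is prime, kill the nilradical by $H$-primeness, and then use that $H=(k^{\ast})^{r}$ is divisible (hence has no proper finite-index subgroups) to force each of the finitely many minimal primes to be $H$-stable, after which $H$-primeness leaves only one of them, namely $0$. Two cosmetic points: the fact you invoke to see $\bigcap_{i\ge 2}P_{i}\not\subseteq P_{1}$ is not prime avoidance but simply primeness of $P_{1}$ applied to a finite intersection (or product) of ideals; and in the noncommutative reading you allude to at the end, $N$ should be taken to be the prime radical (nilpotent by Levitzki's theorem and characteristic, hence $H$-stable), since the set of nilpotent elements need not be an ideal there.
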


We derive the following corollary.

\begin{lemma}
\label{le:primes are H primes}
  Let $R$ be a Noetherian ring, and suppose that a $k$-torus $H$ acts on $R$
  by automorphisms and that $k$ is algebraically closed. Let $I$ be an
  H-ideal and let $P$ be a minimal prime over $I$. Then $P$ is an $H$-prime.
\end{lemma}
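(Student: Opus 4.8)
The plan is to produce an $H$-stable prime ideal wedged between $I$ and $P$; minimality of $P$ over $I$ will then force it to coincide with $P$, which immediately yields the claim. The natural candidate is $(P:H)=\bigcap_{h\in H}h(P)$, the largest $H$-ideal contained in $P$.

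First I would record two inclusions. Since $P$ is prime, Lemma \ref{le:large of prime Hprime} shows that $(P:H)$ is an $H$-prime ideal of $R$. Next, because $I$ is itself an $H$-ideal with $I\subseteq P$, we have $I=h(I)\subseteq h(P)$ for every $h\in H$, hence $I\subseteq\bigcap_{h\in H}h(P)=(P:H)$; together with the evident inclusion $(P:H)\subseteq P$ this gives $I\subseteq(P:H)\subseteq P$. Now I would invoke Lemma \ref{le:H primes are prime}: as $R$ is Noetherian and $H$ is a $k$-torus with $k$ algebraically closed, the $H$-prime ideal $(P:H)$ is in fact prime. Thus $(P:H)$ is a prime ideal lying between $I$ and $P$, and since $P$ is minimal over $I$ we conclude $(P:H)=P$. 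In particular $P=(P:H)$ is an $H$-ideal; being prime, $R/P$ is a domain, hence an $H$-prime ring, so $P$ is an $H$-prime ideal.

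The only step requiring any care is verifying that $(P:H)$ is genuinely trapped between $I$ and $P$ — this is exactly where the $H$-stability of $I$ is used — together with the correct application of the two facts quoted from Brown–Goodearl; beyond this bookkeeping there is no real obstacle. (One could instead argue that the connected group $H$ must fix each of the finitely many minimal primes over the $H$-stable ideal $I$, but the route through $(P:H)$ and Lemmas \ref{le:large of prime Hprime} and \ref{le:H primes are prime} is the most direct given what is already in place.)
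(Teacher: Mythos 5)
Your argument is correct and is essentially identical to the paper's own proof: both wedge $(P:H)$ between $I$ and $P$, use Lemma \ref{le:large of prime Hprime} to see it is $H$-prime, Lemma \ref{le:H primes are prime} to see it is prime, and minimality of $P$ to conclude $P=(P:H)$. The only difference is cosmetic — the paper phrases it as a proof by contradiction while you argue directly, and you spell out the inclusion $I\subseteq (P:H)$ which the paper leaves implicit.
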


 \begin{proof} Suppose not. Then $I \subset (P: H) \subset P$. Lemma
\ref{le:large of prime Hprime} 
yields that $(P:H)$ is an $H$-prime ideal, while one derives from
Lemma \ref{le:H primes are prime}  that $(P : H)$ is a prime ideal. One
concludes that
$P=(P:H)$ since $P$ was assumed to be minimal. The lemma is proved.
\end{proof}

Note that we have not required that $(P:H)$ be Poisson. We will call a Poisson
structure on a $k$-algebra $A$ {\it compatible} with the action of the torus $H$
if $H$ acts by Poisson automorphisms, that means
$$ \{h(x),h(y)\}=h(\{x,y\})\ .$$
We have the following fact.
\begin{lemma}
\label{le:H Poisson}
Let $(A,\{\cdot,\cdot\})$ be a Poisson algebra and suppose that a $k$-torus $H$ acts
on $A$
  compatibly. If $I$ is a Poisson ideal, then $(I:H)$ is a Poisson
 ideal as well.
\end{lemma}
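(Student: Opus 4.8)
The plan is to use directly the description $(I:H)=\bigcap_{h\in H}h(I)$ given in the text and to reduce the claim to two routine facts: that each translate $h(I)$ of a Poisson ideal is again a Poisson ideal, and that an arbitrary intersection of Poisson ideals is a Poisson ideal.

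First I would check that $h(I)$ is a Poisson ideal for every $h\in H$. Since $h$ acts by a ring automorphism, $h(I)$ is certainly an ideal; the only point is stability under the bracket. Given $a\in h(I)$ and $c\in A$, I would write $a=h(b)$ with $b\in I$ and $c=h(c')$ with $c'=h^{-1}(c)\in A$ --- this is where invertibility of $h$ enters --- and then use compatibility of the $H$-action with the Poisson structure, $\{h(x),h(y)\}=h(\{x,y\})$, to get $\{a,c\}=\{h(b),h(c')\}=h(\{b,c'\})$. Because $I$ is a Poisson ideal, $\{b,c'\}\in I$, hence $\{a,c\}\in h(I)$, as needed.

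Next I would observe that if $\{I_\alpha\}$ is any family of Poisson ideals, then $\bigcap_\alpha I_\alpha$ is again one: it is visibly an ideal, and for $a$ in the intersection and any $c\in A$ the element $\{a,c\}$ lies in every $I_\alpha$, hence in the intersection. Applying this to the family $\{h(I):h\in H\}$ and invoking the first step gives that $(I:H)=\bigcap_{h\in H}h(I)$ is a Poisson ideal, which is the assertion. (One could note in passing that this mirrors, on the Poisson side, the fact recalled in Lemma \ref{le:large of prime Hprime} that $(P:H)$ inherits primality-type properties from $P$.)

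I do not expect a genuine obstacle here; the argument is entirely formal. The only place that uses a hypothesis is the first step, which needs both that the torus acts by \emph{Poisson} automorphisms (so that $h$ commutes with the bracket) and that each $h$ is bijective (so that every element of $A$ has the form $h(c')$); the intersection step is purely set-theoretic.
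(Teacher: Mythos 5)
Your argument is correct and is essentially identical to the paper's: the paper likewise shows $h(I)$ is a Poisson ideal by writing $y=h(h^{-1}(y))$ and using $\{h(x),h(h^{-1}(y))\}=h(\{x,h^{-1}(y)\})\in h(I)$, and then concludes for the intersection $(I:H)=\bigcap_{h\in H}h(I)$. No gaps; nothing further is needed.
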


\begin{proof}
Notice that $h(I)$ is a Poisson ideal for all $h\in H$. Indeed, we obtain for
all $x\in I$ and $y\in A$
$$\{h(x),y\}=\{h(x),h(h^{-1} (y))\}=h(\{x,h^{-1} (y)\})\in h(I)\ .$$
We conclude immediately that $(I:H)$ is a Poisson ideal.
\end{proof}

\end{appendix}

\begin{acknowledgements}
 The author would like to thank T.~Lenagan, S.~Launois, M.~Yakimov and
J.~Schr\"oer for several suggestions and comments, and the referees and Giovanni Cerulli-Irelli for many
suggestions regarding the structure of the paper.
\end{acknowledgements}

\end{document}